\numberwithin{equation}{section}
\newtheorem{Theorem}{Theorem}[section]
\newtheorem{Lemma}[Theorem]{Lemma}
\theoremstyle{definition}
\newtheorem{remark}[Theorem]{Remark}
\newtheorem{Proposition}[Theorem]{Proposition}
\providecommand{\norm}[1]{\left\Vert#1\right\Vert}
\newcounter{RomanNumber}
\def\be{\begin{equation}}
\def\en{\end{equation}}
\def\bs{\begin{split}}
\def\es{\end{split}}
\title[Optimal decay rates of a non--conservative compressible two--phase fluid model]
{Optimal decay rates of a non--conservative compressible two--phase
fluid model}
\author{Huaqiao Wang}
\address{College of Mathematics and Statistics, Chongqing University,
Chongqing 401331, China.}
\email{wanghuaqiao@cqu.edu.cn}
\author{Juan Wang}
\address{School of Mathematics and Statistics, Guangxi Normal University, Guilin, Guangxi 541004, P.R.
China} \email{wang$\_$juan98@126.com}
\author{Guochun Wu}
\address{Fujian Province University Key Laboratory of Computational Science, School of Mathematical Sciences, Huaqiao University, Quanzhou 362021, P.R. China.}
\email{guochunwu@126.com}
\author{Yinghui Zhang*}
\address{School of Mathematics and Statistics, Guangxi Normal University, Guilin, Guangxi 541004, P.R.
China} \email{yinghuizhang@mailbox.gxnu.edu.cn}
\subjclass[2010]{35Q30;\, 35K65;\, 76N10.}
\thanks{* Corresponding author: yinghuizhang@mailbox.gxnu.edu.cn}
\keywords{Non-conservative two--phase fluid model;\, optimal decay
rates;\, compressible.}\bigbreak
\date{\today}
\begin{document}
\begin{abstract}
We are concerned with the time decay rates of strong solutions to a
non--conservative compressible viscous two--phase fluid model in the
whole space $\mathbb R^3$. Compared to the previous related works,
the main novelty of this paper lies in the fact that it provides a
general framework that can be used to extract the optimal decay
rates of the solution as well as its all--order spatial derivatives
from one--order to the highest--order, which are the same as those of
the heat equation. Furthermore, for well--chosen initial data, we
also show the lower bounds on the decay rates. Our methods mainly
consist of Hodge decomposition, low--frequency and high--frequency
decomposition, delicate spectral analysis and energy method based on
finite induction.

\end{abstract}

\maketitle

\section{Introduction }
\setcounter{equation}{0}
As is well--known, most of the flows encountered in nature are
multifluid flows. They are widely used in nuclear power, chemical
processing, oil and gas manufacturing, and so on. The classic
approach to simplify the complexity of multiphase flows and satisfy
the engineer's need of some modeling tools is the well--known
volume--averaging method. This approach leads to so--called averaged
multi--phase models, see \cite{Bear, Brennen1, Raja} for details. As
a result of such a procedure, one can obtain the following generic
compressible two--phase fluid model:
\begin{equation}\label{1.1}
\left\{\begin{array}{l}
\alpha^{+}+\alpha^{-}=1, \\
\partial_{t}\left(\alpha^{\pm} \rho^{\pm}\right)+\operatorname{div}\left(\alpha^{\pm} \rho^{\pm} u^{\pm}\right)=0, \\
\partial_{t}\left(\alpha^{\pm} \rho^{\pm} u^{\pm}\right)+\operatorname{div}\left(\alpha^{\pm} \rho^{\pm} u^{\pm} \otimes u^{\pm}\right)
+\alpha^{\pm} \nabla P^{\pm}\left(\rho^{\pm}\right)=\operatorname{div}\left(\alpha^{\pm} \tau^{\pm}\right), \\
P^{+}\left(\rho^{+}\right)-P^{-}\left(\rho^{-}\right)=f\left(\alpha^{-}
\rho^{-}\right),
\end{array}\right.
\end{equation} where the variable $0 \leqq \alpha^{+}(x, t) \leqq 1$ is the
volume fraction of fluid $+$ (the liquid), and $0 \leqq
\alpha^{-}(x, t) \leqq 1$ that of fluid $-$ (the gas).
$\rho^{\pm}(x, t) \geqq 0, u^{\pm}(x, t)$ and
$P^{\pm}\left(\rho^{\pm}\right)=A^{\pm}\left(\rho^{\pm}\right)^{\bar{\gamma}^{\pm}}$
denote the densities, the velocity of each phase, and the two
pressure functions, respectively. $\bar{\gamma}^{\pm} \geqq 1,
A^{\pm}>0$ are positive constants. In what follows, we set
$A^{+}=A^{-}=1$ without loss of any generality. The main purpose of
this article is to get the optimal decay rates of the model
\eqref{1.1} in the whole space $\mathbb{R}^3$. As in \cite{Evje9},
we also assume that $f(\cdot) \in C^{3}([0, \infty))$ and $f$ is a
strictly decreasing function near the equilibrium. Moreover,
$\tau^{\pm}$ are the viscous stress tensors defined by
\begin{equation}\label{1.2}
\tau^{\pm}:=\mu^{\pm}\left(\nabla u^{\pm}+\nabla^{t}
u^{\pm}\right)+\lambda^{\pm} \operatorname{div} u^{\pm} \mathrm{Id},
\end{equation}
where the constants $\mu^{\pm}$ and $\lambda^{\pm}$ are shear and
bulk viscosity coefficients satisfying the physical condition:
$\mu^{\pm}>0$ and $2 \mu^{\pm}+3 \lambda^{\pm} \geqq 0,$ which
implies that $\mu^{\pm}+\lambda^{\pm}>0 .$ This system is known as a
two--fluid flow system with algebraic closure. For more information
about this model, we refer to \cite{Bresch1, Bresch2, Evje9, Friis1,
Ishii1, Prosperetti, Raja} and references therein. Particularly,
Bretsch et al. in the seminal work \cite{Bresch1} considered a model
similar to \eqref{1.1}. More specifically, they made the following
assumptions:
\begin{itemize} \item a common pressure $P^{+}=P^{-}=P$;
\item inclusion of viscous terms of the form \eqref{1.2} where $\mu^{\pm}$
depends on densities $\rho^{\pm}$ and $\lambda^{\pm}=0$;
\item inclusion of a third order derivative of $\alpha^{\pm}
\rho^{\pm}$, which are so--called  internal capillary forces
represented by the well--known Korteweg model on each phase.
\end{itemize}
They obtained the global weak solutions in the periodic domain with
$1<\overline{\gamma}^{\pm}< 6$. Later, Bresch, Huang, and Li
\cite{Bresch2} established the global existence of weak solutions in
one space dimension without the internal capillary forces when
$\overline{\gamma}^{\pm}>1$. Recently, Cui, Wang, Yao, and Zhu
\cite{c1} obtained the time--decay rates of classical solutions for
the three-dimensional Cauchy problem by combining  detailed analysis
of the Green's function to the linearized system with energy
estimates to the nonlinear system.
 \par
The relation between the pressures of \eqref{1.1} implies the
differential identity
\begin{equation}\label{1.3}
\mathrm{d} P^{+}-\mathrm{d} P^{-}=\mathrm{d} f\left(\alpha^{-}
\rho^{-}\right),
\end{equation}
where $P^{\pm}:=P^{\pm}\left(\rho^{\pm}\right).$ It is clear that
\[
\mathrm{d} P^{+}=s_{+}^{2} \mathrm{d} \rho^{+}, \quad \mathrm{d}
P^{-}=s_{-}^{2} \mathrm{d} \rho^{-}, \quad \text { where }
s_{\pm}^{2}:=\frac{\mathrm{d} P^{\pm}}{\mathrm{d}
\rho^{\pm}}\left(\rho^{\pm}\right)=\bar{\gamma}^{\pm}
\frac{P^{\pm}\left(\rho^{\pm}\right)}{\rho^{\pm}}.
\]
Here $s_{\pm}$ represent the sound speed of each phase, respectively.
As in \cite{Bresch1}, we introduce the variables
\begin{equation}\label{1.4}
R^{\pm}=\alpha^{\pm} \rho^{\pm},
\end{equation}
which together with $\eqref{1.1}_{1}$ gives
\begin{equation}\label{1.5}
\mathrm{d} \rho^{+}=\frac{1}{\alpha_{+}}\left(\mathrm{d}
R^{+}-\rho^{+} \mathrm{d} \alpha^{+}\right), \quad \mathrm{d}
\rho^{-}=\frac{1}{\alpha_{-}}\left(\mathrm{d} R^{-}+\rho^{-}
\mathrm{d} \alpha^{+}\right).
\end{equation}
By virtue of \eqref{1.3} and \eqref{1.4}, we finally get
\begin{equation}\label{1.6}
\mathrm{d} \alpha^{+}=\frac{\alpha^{-} s_{+}^{2}}{\alpha^{-}
\rho^{+} s_{+}^{2}+\alpha^{+} \rho^{-} s_{-}^{2}} \mathrm{d}
R^{+}-\frac{\alpha^{+} \alpha^{-}}{\alpha^{-} \rho^{+}
s_{+}^{2}+\alpha^{+} \rho^{-}
s_{-}^{2}}\left(\frac{s_{-}^{2}}{\alpha^{-}}+f^{\prime}\right)
\mathrm{d} R^{-}. \end{equation}
 Substituting \eqref{1.6} into \eqref{1.5}, we deduce
the following expressions:
\[
\mathrm{d} \rho^{+}=\frac{\rho^{+} \rho^{-}
s_{-}^{2}}{R^{-}\left(\rho^{+}\right)^{2}
s_{+}^{2}+R^{+}\left(\rho^{-}\right)^{2} s_{-}^{2}}\left(\rho^{-}
\mathrm{d} R^{+}+\left(\rho^{+}+\rho^{+} \frac{\alpha^{-}
f^{\prime}}{s_{-}^{2}}\right) \mathrm{d} R^{-}\right),
\]
and
\[
\mathrm{d} \rho^{-}=\frac{\rho^{+} \rho^{-}
s_{+}^{2}}{R^{-}\left(\rho^{+}\right)^{2}
s_{+}^{2}+R^{+}\left(\rho^{-}\right)^{2} s_{-}^{2}}\left(\rho^{-}
\mathrm{d} R^{+}+\left(\rho^{+}-\rho^{-} \frac{\alpha^{+}
f^{\prime}}{s_{+}^{2}}\right) \mathrm{d} R^{-}\right),
\]
which together with \eqref{1.3} give the pressure differential
$\mathrm{d} P^{\pm}$:
\[
\mathrm{d} P^{+}=\mathcal{C}^{2}\left(\rho^{-} \mathrm{d}
R^{+}+\left(\rho^{+}+\rho^{+} \frac{\alpha^{-}
f^{\prime}}{s_{-}^{2}}\right) \mathrm{d} R^{-}\right) ,\] and
\[
\mathrm{d} P^{-}=\mathcal{C}^{2}\left(\rho^{-} \mathrm{d}
R^{+}+\left(\rho^{+}-\rho^{-} \frac{\alpha^{+}
f^{\prime}}{s_{+}^{2}}\right) \mathrm{d} R^{-}\right) ,\] where
\[
\mathcal{C}^{2}:=\frac{s_{-}^{2} s_{+}^{2}}{\alpha^{-} \rho^{+}
s_{+}^{2}+\alpha^{+} \rho^{-} s_{-}^{2}}.\]\par Next, by noting the
fundamental relation: $\alpha^++\alpha^-=1$, we can get the
following equality:
\begin{equation}\label{1.7}
\frac{R^+}{\rho^+}+\frac{R^-}{\rho^-}=1, ~~\hbox{and thus}~~
\rho^-=\frac{R^-\rho^+}{\rho^+-R^+}.\end{equation} Then, we have
from the pressure relation $\eqref{1.1}_4$ that
\begin{equation}\label{1.8}
\varphi(\rho^+, R^+,
R^-):=P^+(\rho^+)-P^-{\left(\frac{R^-\rho^+}{\rho^+-R^+}\right)}-f({R^-})=0.
\end{equation}

\noindent Thus, we can employ the implicit function theorem to
define $\rho^{+}$. To see this, by differentiating the above
equation with respect to $\rho^{+}$ for given $R^{+}$ and $R^{-}$,
we get
\[
\frac{\partial\varphi}{\partial\rho^+}(\rho^+, R^+,
R^-)=s_{+}^{2}+s_{-}^{2} \frac{R^{-}
R^{+}}{\left(\rho^{+}-R^{+}\right)^{2}},
\]
which is positive for any $\rho^{+}\in(R^+, +\infty)$ and
$R^{\pm}>0.$ This together with the implicit function theorem
implies that $\rho^{+}=\rho^{+}\left(R^{+}, R^{-}\right)
\in\left(R^{+},+\infty\right)$ is the unique solution of the
equation \eqref{1.8}. By virtue of \eqref{1.4}, \eqref{1.8} and
$\eqref{1.1}_{1}, \rho^{-}$ and $\alpha^{\pm}$ can be defined by
\[
\begin{aligned}
\rho^{-}\left(R^{+}, R^{-}\right) &=\frac{R^{-} \rho^{+}\left(R^{+}, R^{-}\right)}{\rho^{+}\left(R^{+}, R^{-}\right)-R^{+}}, \\
\alpha^{+}\left(R^{+}, R^{-}\right) &=\frac{R^{+}}{\rho^{+}\left(R^{+}, R^{-}\right)}, \\
\alpha^{-}\left(R^{+}, R^{-}\right)
&=1-\frac{R^{+}}{\rho^{+}\left(R^{+},
R^{-}\right)}=\frac{R^{-}}{\rho^{-}\left(R^{+}, R^{-}\right)}.
\end{aligned}
\]
We refer the readers to [\cite{Bresch2}, P. 614] for more details.
\par
Therefore, we can rewrite system \eqref{1.1} into the following
equivalent form:
\begin{equation}\label{1.9}
\left\{\begin{array}{l}
\partial_{t} R^{\pm}+\operatorname{div}\left(R^{\pm} u^{\pm}\right)=0, \\
\partial_{t}\left(R^{+} u^{+}\right)+\operatorname{div}\left(R^{+} u^{+} \otimes u^{+}\right)+\alpha^{+} \mathcal{C}^{2}\left[\rho^{-} \nabla R^{+}+\left(\rho^{+}+\rho^{+} \frac{\alpha^{-} f^{\prime}}{s_{-}^{2}}\right) \nabla R^{-}\right] \\
\hspace{2.5cm}=\operatorname{div}\left\{\alpha^{+}\left[\mu^{+}\left(\nabla u^{+}+\nabla^{t} u^{+}\right)
+\lambda^{+} \operatorname{div} u^{+} \operatorname{Id}\right]\right\}, \\
\partial_{t}\left(R^{-} u^{-}\right)+\operatorname{div}\left(R^{-} u^{-} \otimes u^{-}\right)+\alpha^{-} \mathcal{C}^{2}\left[\rho^{-} \nabla R^{+}+\left(\rho^{+}-\rho^{-} \frac{\alpha^{+} f^{\prime}}{s_{+}^{2}}\right) \nabla R^{-}\right] \\
\hspace{2.5cm}=\operatorname{div}\left\{\alpha^{-}\left[\mu^{-}\left(\nabla
u^{-}+\nabla^{t} u^{-}\right)+\lambda^{-} \operatorname{div} u^{-}
\operatorname{Id}\right]\right\}.
\end{array}\right.
\end{equation}
In the present paper, we consider the initial value problem to
\eqref{1.9} in the whole space $\mathbb R^3$ with the initial data
\begin{equation}\label{1.10} (R^{+}, u^{+}, R^{-}, u^{-})(x,
0)=(R_{0}^{+}, u_{0}^{+}, R_{0}^{-},
u_{0}^{-})(x)\rightarrow(R_{\infty}^{+}, 0, R_{\infty}^{-}, 0) \quad
\hbox{as}\quad |x|\rightarrow\infty \in \mathbb{R}^{3},
\end{equation}
where $R^{\pm}_\infty>0$ denote the background doping profiles, and
for simplicity, are taken as 1 in this paper.
\par To put our results into context, let us highlight some recent progress on the topics of
non--conservative compressible viscous two--phase fluid model and
related model. By taking the following simplifications:
\begin{itemize}
  \item Due to the fact that the liquid phase is much heavier than the gas phase, typically to the order $\frac{\rho_+}{\rho_-}\sim 10^{3}$,
  we can neglect the gas phase in the momentum equation corresponding to the gas;
  \item A non--slip condition is assumed, i.e., $u_+=u_-=u$,
\end{itemize}
and setting $m=\alpha_+\rho_+$ and $n=\alpha_-\rho_-$, one can get
the simplified version of \eqref{1.1}: \begin{equation} \ \ \
\left\{
\begin{array}{ll}
m_t+\hbox{div}(m u)=0,\\
n_t+\hbox{div}(n u)=0,\\
(m u)_t+\hbox{div}(m u\otimes u)+\nabla P(m,n)=\hbox{div}
\left(\mu(\nabla u+\nabla u^T)+\lambda(\hbox{div}u)I\right).
\end{array}
\right.   \label{1.11}
\end{equation}
For the simplified model \eqref{1.11} and related model that are
subject to various initial and initial--boundary conditions have
been explored thoroughly during the past decades, the global
existence and asymptotic behavior of the solutions (weak, strong,
classic) were proved. We refer the readers to \cite{Evje1, Evje2,
Evje3, Evje4, Evje5, Evje6, Evje7, Evje8, Evje10, Fan1, Guo2, Hao1,
Huang1, Liu1, ruan1, ruan2, zw2, Vasseur, zw3, Wen1, Wu9, Yao1,
Yao2, Yao3, Yao4, Yao5, ZYH3, Zhang4} and references therein.
\par However, due to difficulties coming from different nonlinear density--pressure laws
corresponding to the different phases and the appearance of the
non--conservative pressure terms $\alpha^{\pm}\nabla P^{\pm}$, so
far there is few results on the non--conservative compressible
viscous two--phase fluid model \eqref{1.9}. Recently, Evje, Wang and
Wen \cite{Evje9} studied the global well--posedness and decay rates
of the Cauchy problem \eqref{1.9}--\eqref{1.10}. Their main result
can be stated in the following theorem:
\begin{Theorem}\label{0mainth}Under the
condition
\begin{equation}\label{1.12}-\frac{s_{-}^{2}(1,1)}{\alpha^{-}(1,1)}<f^{\prime}(1)<\frac{\eta-s_{-}^{2}(1,1)}{\alpha^{-}(1,1)}<0,\end{equation}
where $\eta$ is a positive, small fixed constant, there exists a
constant $\varepsilon$ such that if
\begin{equation}\label{1.13}
\left\|\left(R_{0}^{+}-1, u_{0}^{+}, R_{0}^{-}-1,
u_{0}^{-}\right)\right\|_{H^{2}\left(\mathbb{R}^{3}\right)} \leq
\varepsilon,
\end{equation}
then the initial--value problem \eqref{1.9}--\eqref{1.10} admits a
solution $\left(R^{+}, u^{+}, R^{-}, u^{-}\right)$ globally in time
in the sense that \[
\begin{array}{l}
R^{+}-1, R^{-}-1 \in C^{0}\left([0, \infty) ; H^{2}\left(\mathbb{R}^{3}\right)\right) \cap C^{1}\left([0, \infty) ;
H^{1}\left(\mathbb{R}^{3}\right)\right), \\
u^{+}, u^{-} \in C^{0}\left([0, \infty) ;
H^{2}\left(\mathbb{R}^{3}\right)\right) \cap C^{1}\left([0, \infty)
; L^{2}\left(\mathbb{R}^{3}\right)\right).
\end{array}
\]
Moreover, if in addition the initial data $\left(R_{0}^{+}-1,
u_{0}^{+}, R_{0}^{-}-1, u_{0}^{-}\right)$ is bounded in
$L^{1}\left(\mathbb{R}^{3}\right),$ the solution enjoys the
following decay--in--time estimates:
\begin{align}\label{1.14}
\left\|\left(R^{+}-1, u^{+}, R^{-}-1, u^{-}\right)(t)\right\|_{L^{2}\left(\mathbb{R}^{3}\right)} &
\leq C(1+t)^{-\frac{3}{4}} \text { for all } t \geq 0, \\
\label{1.15}\left\|\nabla\left(R^{+}, u^{+}, R^{-},
u^{-}\right)(t)\right\|_{H^{1}\left(\mathbb{R}^{3}\right)} & \leq
C(1+t)^{-\frac{5}{4}} \text { for all } t \geq 0,
\end{align}
for some positive constant $C$.\end{Theorem}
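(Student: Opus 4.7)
The plan is to split the problem into (i) local existence plus a uniform a priori $H^2$ bound to extend to global time, and (ii) optimal $L^2$ decay of the global solution via a combination of linear spectral analysis and nonlinear Duhamel/energy estimates. Denoting the perturbations $(\sigma^+,\sigma^-) := (R^+-1,R^--1)$ and rewriting \eqref{1.9} around the equilibrium $(1,0,1,0)$, the key algebraic observation is that the $2\times 2$ linearized pressure matrix
\begin{equation*}
\mathcal{M} := \begin{pmatrix} \bar a_{11} & \bar a_{12} \\ \bar a_{21} & \bar a_{22} \end{pmatrix},
\end{equation*}
whose entries are read off from $\eqref{1.9}_{2}$--$\eqref{1.9}_{3}$ at $R^\pm=1$, is symmetrizable and strictly positive-definite \emph{precisely} when assumption \eqref{1.12} holds. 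This algebraic fact is what drives all the subsequent analysis.

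For (i), I would construct a local solution in $H^2$ by a standard linearization plus contraction argument, using that $\alpha^\pm, \rho^\pm, \mathcal{C}^2$ depend smoothly on $(R^+,R^-)$ in a neighbourhood of $(1,1)$. I would then close uniform a priori estimates in the Matsumura--Nishida spirit: pair the density equations with $\mathcal{M}$-weighted $\sigma^\pm$ and the momentum equations with $u^\pm$, differentiate up to order $2$, and recover dissipation on $\nabla\sigma^\pm$ by pairing the once-differentiated momentum equations with $\nabla\sigma^\pm$. Absorbing nonlinear commutators into the dissipation by smallness of $\|(\sigma^\pm,u^\pm)\|_{H^2}$ yields
\begin{equation*}
\frac{d}{dt}\mathcal{E}(t) + c\bigl(\|\nabla(\sigma^+,\sigma^-)\|_{H^1}^2 + \|\nabla(u^+,u^-)\|_{H^1}^2\bigr) \le 0,
\end{equation*}
which closes a continuation argument under \eqref{1.13}.

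For (ii), I would apply the Hodge decomposition $u^\pm = \nabla\Delta^{-1}\Dv u^\pm + \mathcal{P}u^\pm$, so that $U := (\sigma^+,\sigma^-,\Dv u^+,\Dv u^-)$ solves a $4\times 4$ hyperbolic--parabolic linear system, while $\mathcal{P}u^\pm$ obey essentially decoupled heat-type equations. On the Fourier side the symbol $\widehat L(\xi)$ is a $4\times 4$ matrix: for $|\xi|\le r_0$ small, a perturbative expansion shows that all four eigenvalues take the form $\lambda_j(\xi) = -\kappa_j|\xi|^2 + O(|\xi|^4)$ with $\kappa_j>0$, producing the heat-kernel bound
\begin{equation*}
\bigl\|\nabla^k e^{tL}U_0\bigr\|_{L^2(|\xi|\le r_0)} \lesssim (1+t)^{-3/4-k/2}\|U_0\|_{L^1},
\end{equation*}
while for $|\xi|\ge r_0$ a symmetrizer built from $\mathcal{M}$ yields exponential $L^2\to L^2$ decay. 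Combined with standard heat decay for $\mathcal{P}u^\pm$, this delivers the sharp linear rates $(1+t)^{-3/4}$ and $(1+t)^{-5/4}$.

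The nonlinear step then combines these linear estimates with the a priori bound from (i) via Duhamel. Since the nonlinearities in \eqref{1.9} are at least quadratic in the perturbation and contain at most one spatial derivative, a standard time-weighted bootstrap---setting $M(t) := \sup_{0\le s\le t}\bigl\{(1+s)^{3/4}\|U(s)\|_{L^2} + (1+s)^{5/4}\|\nabla U(s)\|_{H^1}\bigr\}$ and proving $M(t)\le C(\varepsilon + M(t)^2)$---yields \eqref{1.14}--\eqref{1.15}. The main obstacle is the low-frequency spectral analysis of the $4\times 4$ block: because the pressure coupling is genuinely non-diagonal, the characteristic polynomial does not factor neatly, and one must use \eqref{1.12} to verify that the four eigenvalues are simple with strictly positive dissipation coefficients $\kappa_j$, so that the resolvent expansion survives and the $(1+t)^{-3/4}$ low-frequency rate is actually available.
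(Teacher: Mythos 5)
Your overall plan (Hodge decomposition into compressible and incompressible parts, spectral analysis of the resulting $4\times 4$ compressible block, heat-kernel low-frequency bounds plus exponential high-frequency decay, then Duhamel and a time-weighted bootstrap) coincides with the framework this paper uses to prove Theorem~\ref{1mainth}, of which Theorem~\ref{0mainth} is the $N=2$ corollary. It is, however, a different route from the one actually taken in \cite{Evje9} (which the paper cites as the source of Theorem~\ref{0mainth}): there the argument rests on Matsumura--Nishida style interactive energy functionals between $u^{\pm}$ and the densities together with Green's function analysis, without the Hodge/spectral machinery. So your proposal is closer to this paper than to the original proof, and that is a legitimate alternative; the spectral route has the advantage of also giving the optimal highest-order rates and lower bounds that \cite{Evje9} cannot reach.

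That said, two concrete statements in your sketch are wrong. First, your claimed spectral picture for the $4\times 4$ compressible block is incorrect: you write that the four low-frequency eigenvalues have the purely parabolic form $\lambda_j(\xi)=-\kappa_j|\xi|^2+O(|\xi|^4)$. In fact, as Lemma~\ref{lemma2.1} shows, they are genuinely complex with leading-order imaginary parts of size $|\xi|$, of the form $\lambda_j=-\bar\nu_j|\xi|^2\pm i\sqrt{\kappa_2\mp\kappa_1}\,|\xi|+O(|\xi|^3)$; these are damped acoustic modes, not heat modes. Your omission of the $O(|\xi|)$ oscillatory part does not destroy the upper decay bound, since the $L^2$ estimate only sees $\operatorname{Re}\lambda_j\sim -c|\xi|^2$, but a resolvent expansion that assumes real eigenvalues would be carrying out the wrong computation, and the simplicity/distinctness argument you invoke would be based on a false premise. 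Second, your claim that the nonlinearities in \eqref{1.9} ``contain at most one spatial derivative'' is also false: the terms $\mu^{\pm}\,l_{\pm}(n^+,n^-)\,\partial_j^2 u_i^{\pm}$ and $(\mu^{\pm}+\lambda^{\pm})\,l_{\pm}(n^+,n^-)\,\partial_i\partial_j u_j^{\pm}$ carry two derivatives of $u^{\pm}$. In the Duhamel $L^1$ estimate these must be paired as $\|n^{\pm}\|_{L^2}\|\nabla^2 u^{\pm}\|_{L^2}$, which still yields the integrable rate $(1+t)^{-2}$ (this is exactly how the paper treats them in \eqref{3.40}), but your bookkeeping as written would underestimate the derivative count and, if followed literally, would not close for the second-order nonlinearity.
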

\par
\bigskip
Noting the decay rate in \eqref{1.15}, for the second--order (i.e.
\textbf{the highest--order}) spatial derivative  of the solution, it
holds that
\begin{equation}\label{1.16}\left\|\nabla^2\left(R^{+}, u^{+}, R^{-},
u^{-}\right)(t)\right\|_{L^{2}\left(\mathbb{R}^{3}\right)}\leq
C(1+t)^{-\frac{5}{4}}.\end{equation} On the other hand, let us
revisit the following classical result of the heat equation£º
\begin{equation}\label{1.17}
\left\{\begin{array}{l}
\partial_{t} u-\Delta u=0, ~\text{ in } ~\mathbb{R}^{3}, \\
u|_{t=0}=u_0.
\end{array}\right.
\end{equation}
If $u_0\in H^N(\mathbb{R}^{3})\cap L^1(\mathbb{R}^{3})$ with $N\geq
0$ be an integer, then for any $0\leq \ell\leq N$, the solution of
the heat equation \eqref{1.17} has the following decay rate:
\begin{equation}\label{1.18}\|\nabla^\ell u(t)\|_{L^2(\mathbb{R}^{3})}\leq C(1+t)^{-\frac{3}{4}-\frac{\ell}{2}}.\end{equation}
Particularly, taking $N=2$ and $\ell=2$, one has
\begin{equation}\label{1.19}\|\nabla^2 u(t)\|_{L^2(\mathbb{R}^{3})}\leq C(1+t)^{-\frac{7}{4}}.\end{equation}
Therefore, in view of \eqref{1.16} and \eqref{1.19}, it is clear
that the decay rate of the second--order spatial derivative of the
solution in \eqref{1.16} is slower than that of the heat equation as
in \eqref{1.19}. So, the decay rate of the second--order spatial
derivative of the solution in \eqref{1.15} is not optimal in this
sense.\par
 The main motivation of this paper is to provide a general framework that can be used to extract the optimal decay
rates of the solution to the Cauchy problem \eqref{1.9}--\eqref{1.10}
as well as its all--order spatial derivatives from one--order to the
highest--order. More precisely, we obtain the optimal decay rates of
the solution to the Cauchy problem \eqref{1.9}--\eqref{1.10} as well
as its all--order spatial derivatives from one--order to the
highest--order, which are the same as those of the heat equation.
Moreover, for well--chosen initial data, we also show the lower
bounds on the decay rates. Our methods mainly include Hodge
decomposition, low--frequency and high--frequency decomposition,
delicate spectral analysis and energy method based on finite
induction.\par
 Before stating
our main result, let us first introduce the notations and
conventions used throughout this paper. We use $H^k(\mathbb R^3)$ to
denote the usual Sobolev spaces with norm $\|\cdot\|_{H^k}$ and
$L^p$, $1\leq p\leq \infty$ to denote the usual $L^p(\mathbb R^3)$
spaces with norm $\|\cdot\|_{L^p}$.  For the sake of conciseness, we
do not precise in functional space names when they are concerned
with scalar-valued or vector-valued functions, $\|(f, g)\|_X$
denotes $\|f\|_X+\|g\|_X$.  We will employ the notation $a\lesssim
b$ to mean that $a\leq Cb$ for a universal constant $C>0$ that only
depends on the parameters coming from the problem. We denote
$\nabla=\partial_x=(\partial_1,\partial_2,\partial_3)$, where
$\partial_i=\partial_{x_i}$, $\nabla_i=\partial_i$ and put
$\partial_x^\ell f=\nabla^\ell f=\nabla(\nabla^{\ell-1}f)$.  Let
$\Lambda^s$ be the pseudo differential operator defined by
\begin{equation}\Lambda^sf=\mathfrak{F}^{-1}(|{\bf \xi}|^s\widehat f),~\hbox{for}~s\in \mathbb{R},\nonumber\end{equation}
where $\widehat f$ and $\mathfrak{F}(f)$ are the Fourier transform
of $f$. The homogenous Sobolev space $\dot{H}^s(\mathbb{R}^3)$ with
norm given by $\|f\|_{\dot{H}^s}\overset{\triangle}=\|\Lambda^s
f\|_{L^2}$. For a radial function $\phi\in C_0^\infty(\mathbb
R^3_{{\bf \xi}})$  such that $\phi({\bf \xi})=1$ when $|{\bf
\xi}|\leq \frac{\eta_0}{2}$ and $\phi({\bf \xi})=0$ when $|{\bf
\xi}|\geq \eta_0$ with a given positive constant $\eta_0$, we define
the low--frequency part of $f$ by
$$f^l=\mathfrak{F}^{-1}[\phi({\bf \xi})\widehat f]$$
and the high--frequency part of $f$ by
$$f^h=\mathfrak{F}^{-1}[(1-\phi({\bf \xi}))\widehat f].$$
It is direct to check that $f=f^l+f^h$ if the Fourier transform of $f$ exists.

\medskip
Now, we are in a position to state our main result.
\smallskip
\begin{Theorem}\label{1mainth}Assume that $R_{0}^{+}-1, u_{0}^{+}, R_{0}^{-}-1,
u_{0}^{-}\in H^N(\mathbb{R}^3)$ for an integer $N\geq 2$ and
\eqref{1.12} holds for a given small positive constant $\eta$.
There exists a constant $\delta_0$ such that if
\begin{equation}\label{1.20}
\left\|\left(R_{0}^{+}-1, u_{0}^{+}, R_{0}^{-}-1,
u_{0}^{-}\right)\right\|_{H^{2}} \leq \delta_0,
\end{equation}
then the Cauchy problem \eqref{1.9}--\eqref{1.10} admits a unique
solution $\left(R^{+}, u^{+}, R^{-}, u^{-}\right)$ globally in time
in the sense that \[
\begin{array}{l}
R^{+}-1, R^{-}-1 \in C^{0}\left([0, \infty) ;
H^{N}\left(\mathbb{R}^{3}\right)\right) \cap C^{1}\left([0, \infty)
;
H^{N-1}\left(\mathbb{R}^{3}\right)\right), \\
u^{+}, u^{-} \in C^{0}\left([0, \infty) ;
H^{N}\left(\mathbb{R}^{3}\right)\right) \cap C^{1}\left([0, \infty)
; H^{N-2}\left(\mathbb{R}^{3}\right)\right).
\end{array}
\]
Moreover, the following convergence rates hold true.
\smallskip
\begin{itemize}
\item {\bf Upper bounds.} If additionally
\begin{equation}\label{1.21}N_0=\left\|\left(R_{0}^{+}-1, u_{0}^{+},
R_{0}^{-}-1, u_{0}^{-}\right)\right\|_{L^{1}}<+\infty,\end{equation}
 then for all $t\geq 0,$
\begin{equation}\label{1.22}\left\|\nabla^{\ell}\left(R^{+}-1, u^{+}, R^{-}-1,
u^{-}\right)(t)\right\|_{L^{2}} \leq
C(N_0)(1+t)^{-\frac{3}{4}-\frac{\ell}{2}},
\end{equation}
and
\begin{equation}\label{1.23}\left\|\left(R^{+}-1, u^{+}, R^{-}-1,
u^{-}\right)(t)\right\|_{L^{p}}\leq
C(N_0)(1+t)^{-\frac{3}{2}\left(1-\frac{1}{p}\right)},
\end{equation}
for $0\leq \ell\leq N$ and $2\le p\le \infty$.
\smallskip
\item {\bf Lower bounds.} Let  $(n_0^+,u_0^+,R_0^-,u_0^-)=(\alpha_1(R_0^+-1),\sqrt{\alpha_1}
u_0^+,\alpha_4(R_0^--1),\sqrt{\alpha_4}u_0^-)$ where the definitions
of the positive constants $\alpha_1$ and $\alpha_4$ are given in
Section 2 and assume that the Fourier transform of functions
$(n_0^+,u_0^+,n_0^-,u_0^-)$ satisfy
\begin{equation}\label{1.24}\quad\quad\widehat{n}_0^-(\xi)=0,~
\wedge^{-1}\text{\rm div}\ \widehat{u}_0^+(\xi)=\wedge^{-1}\text{\rm div}\
\widehat{u}_0^-(\xi)=0,~\text{\rm and }~ |\widehat{n}_0^+(\xi)|\ge
N_0\sqrt{\delta_0},
\end{equation}
for any $|\xi|\le \eta_1$. Then there is a positive constant $c_0$
independent of time such that for any large enough $t$,
\begin{equation}\begin{split}\label{1.25}
\min&\left\{\|\nabla^\ell(R^+-1)(t)\|_{L^2},\|\nabla^\ell
u^+(t)\|_{L^2},\|\nabla^\ell(R^--1)(t)\|_{L^2},\|\nabla^\ell
u^-(t)\|_{L^2}\right\}\\ \quad\quad\geq
&~c_0(1+t)^{-\frac{3}{4}-\frac{\ell}{2}},
\end{split}\end{equation}
and
\begin{equation}\label{1.26}
\hspace{1.5cm}\min\left\{\|(R^+-1)(t)\|_{L^p},\|u^+(t)\|_{L^p},\|(R^--1)(t)\|_{L^p},\|u^-(t)\|_{L^p}\right\}\ge
c_0(1+t)^{-\frac{3}{2}\left(1-\frac{1}{p}\right)},
\end{equation}
for $0\leq \ell\leq N$ and $\ 2\leq p\leq\infty.$
\end{itemize}
\end{Theorem}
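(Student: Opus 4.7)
The plan is to combine a global existence result in $H^N$ (extending Theorem~\ref{0mainth}) with optimal decay estimates obtained from a careful spectral analysis in the low--frequency regime and energy estimates controlling the high--frequency part. First, I would introduce the perturbation variables $n^{+}=\alpha_1(R^{+}-1)$, $n^{-}=\alpha_4(R^{-}-1)$ (with $\alpha_1,\alpha_4$ as in Section~2) and suitably rescaled velocities, and rewrite \eqref{1.9} in the form
\[
\partial_t U + \mathcal{L} U = \mathcal{N}(U), \qquad U=(n^+,u^+,n^-,u^-),
\]
where $\mathcal{L}$ is a linear second--order operator coupling densities and velocities and $\mathcal{N}$ collects the nonlinearities, which are quadratic near equilibrium. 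Global existence and uniqueness in $H^N$ follow from the local existence theorem coupled with a standard continuation argument based on a priori energy estimates at each derivative order $\ell\le N$; the smallness assumption \eqref{1.20} (only at the $H^2$ level, used via $H^2\hookrightarrow L^\infty$) suffices to close those estimates by finite induction on $\ell$.

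Next, to extract decay rates, I would apply a Hodge decomposition $u^\pm=\nabla\Delta^{-1}\mathrm{div}\,u^\pm+\mathcal{P}u^\pm$. The divergence--free parts of the two velocities satisfy decoupled heat--type equations and yield pure heat--kernel decay. For the potential parts coupled with $(n^+,n^-)$, I would analyse the resulting $4\times 4$ Fourier symbol $\mathcal{A}(\xi)$ of the linearised operator. Under condition \eqref{1.12}, a perturbative expansion at small $|\xi|$ produces four eigenvalues of the form $\lambda_j(\xi)=-c_j|\xi|^2+O(|\xi|^4)$ with strictly positive $c_j$, which gives the low--frequency Green's--function bound
\[
|\widehat{U^l}(t,\xi)|\le C e^{-c_0|\xi|^2 t}\bigl|\widehat{U_0}(\xi)\bigr|,
\]
and hence, for $L^1$ data, $\|\nabla^\ell U^l(t)\|_{L^2}\lesssim (1+t)^{-3/4-\ell/2}$.

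With these linear estimates in hand, the upper bounds \eqref{1.22} would be proved by finite induction on $\ell$ from $0$ to $N$. At order $\ell$, I would split $U=U^l+U^h$; the low--frequency part is controlled via Duhamel's formula and the spectral estimate above, where the nonlinear source is handled in $L^1$ using the inductive hypothesis on lower derivatives, while the high--frequency part is dominated by an energy functional $\mathcal{E}^h_\ell(t)$ whose dissipation controls the full $H^\ell$ high--frequency norm, thanks to the parabolic smoothing coming from $\mathrm{div}(\alpha^\pm\tau^\pm)$. Combining these yields a Lyapunov--type inequality that, together with the linear low--frequency bound, closes into \eqref{1.22}; the $L^p$ bound \eqref{1.23} follows by Gagliardo--Nirenberg interpolation between $L^2$ and $L^\infty$.

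For the lower bounds \eqref{1.25}--\eqref{1.26}, I would exploit the full asymptotic expansion of the Green's function from Step~2 together with assumption \eqref{1.24}, which guarantees that the Fourier transform of the dominant component $n_0^+$ is bounded below on the ball $|\xi|\le\eta_1$. Writing the solution as $\widehat{U^l}(t,\xi)=e^{\lambda_1(\xi)t}\widehat{U_0}(\xi)+\mathcal{R}(t,\xi)$, the already established upper bounds and the quadratic nature of $\mathcal{N}$ show that $\mathcal{R}$ decays strictly faster than the linear part by an extra factor of at least $(1+t)^{-1/2}$. Plancherel's theorem restricted to $|\xi|\le\eta_1$ then produces the lower bound $c_0(1+t)^{-3/4-\ell/2}$, and the $L^p$ lower bound follows by interpolation. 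The main obstacle throughout is the non--conservative pressure contribution $\alpha^{\pm}\nabla P^{\pm}$: it destroys the symmetric--hyperbolic structure one would normally exploit, so both the computation of the spectrum of $\mathcal{A}(\xi)$ and the construction of the high--frequency energy functionals $\mathcal{E}^h_\ell$ require the specific algebraic condition \eqref{1.12} (which is precisely what ensures dissipativity of all four low--frequency modes) and a delicate treatment of the cross--terms between the two phases.
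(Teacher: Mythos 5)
Your overall strategy matches the paper's: perturbation variables, Hodge decomposition into a coupled $4\times4$ system plus two heat equations, low--frequency spectral estimates via Duhamel, high--frequency energy functionals with extra interactive terms, and a separate finite induction for $\ell\le N$. However, there are two concrete errors in the details that would derail the argument as written.

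First, the claimed low--frequency eigenvalue expansion $\lambda_j(\xi)=-c_j|\xi|^2+O(|\xi|^4)$ is wrong. The symbol $\mathcal A_1(\xi)$ of the compressible block is hyperbolic--parabolic, and the correct Taylor expansion (Lemma~\ref{lemma2.1}) is
$$\lambda_j(\xi)=-\bar\nu_j|\xi|^2\pm i\sqrt{\kappa_2\pm\kappa_1}\,|\xi|+\mathcal O(|\xi|^3),$$
with a nonvanishing $O(|\xi|)$ \emph{imaginary} part coming from the two distinct acoustic speeds. For the upper bound this error is harmless, since $|e^{\lambda_j t}|$ only sees $\mathrm{Re}\,\lambda_j$; but for the lower bound it is essential, because the Green's function contains oscillatory factors $\cos\!\big(\sqrt{\kappa_2\pm\kappa_1}\,|\xi|\,t+O(|\xi|^3)t\big)$, and proving $\|\tilde n^{+,l}(t)\|_{L^2}\gtrsim (1+t)^{-3/4}$ requires integrating $\cos^2$ over the low--frequency ball and using the double--angle identity and a Riemann--Lebesgue--type cancellation, as the paper does in \eqref{2.33}. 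A proof that pretends the symbol is purely dissipative has nothing to handle there.

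Second, the claim that the Duhamel remainder $\mathcal R$ ``decays strictly faster than the linear part by an extra factor of at least $(1+t)^{-1/2}$'' is false. With $\|\mathcal F(\tau)\|_{L^1}\lesssim(1+\tau)^{-3/2}$, the contribution $\int_0^t(1+t-\tau)^{-3/4}\|\mathcal F(\tau)\|_{L^1}\,d\tau$ is $O((1+t)^{-3/4})$, i.e.\ the \emph{same} rate as the linear term. What saves the lower bound in the paper is not a faster rate but a smaller constant: the nonlinear term carries a factor $\delta_0 N_0$, while the linear term (via hypothesis \eqref{1.24}) carries $N_0\sqrt{\delta_0}$, and $\delta_0\ll\sqrt{\delta_0}$. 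Related to this, your passage from the $L^2$ lower bound to the $\nabla^\ell$ and $L^p$ lower bounds is stated as ``by interpolation'' without noting that one also needs a matching \emph{upper} bound in $\dot H^{-1}$, i.e.\ $\|\wedge^{-1}U(t)\|_{L^2}\lesssim(1+t)^{-1/4}$, combined with
$$\|f\|_{L^2}\le C\,\|\wedge^{-1}f\|_{L^2}^{\ell/(\ell+1)}\|\nabla^\ell f\|_{L^2}^{1/(\ell+1)},$$
which is the actual mechanism in Theorem~\ref{5mainth}. Without the negative--Sobolev bound the interpolation goes the wrong way. These two points need to be corrected before the lower--bound half of the theorem is genuinely proved.
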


\begin{remark} Compared to Theorem \ref{0mainth} of Evje, Wang and Wen
\cite{Evje9}, the main new contribution of Theorem \ref{1mainth}
lies in that it provides a general framework that can be used to
derive the optimal decay rates of the solution as well as its
all--order spatial derivatives from one--order to the
highest--order, which are the same as those of the heat equation.
More specifically, under the assumptions that the initial data
belongs to $H^N$ with any integer $N\geq 2$, $H^2$--norm of the
initial data is sufficiently small and $L^1$--norm of the initial
data is bounded, but the higher--order norms can be arbitrarily
large, our approach
 shows that
the optimal decay rates of the solution as well as its all--order
spatial derivatives from one--order to the highest--order
($N$--order) are the same as those of the heat equation.
Particularly, by taking $N=2$ in Theorem \ref{1mainth}, it is easy
to see that Theorem \ref{0mainth} is a direct corollary of Theorem
\ref{1mainth}. Moreover, it is clear that in \eqref{1.22}, the
second--order (the highest--order) spatial derivative of the
solution decays at the $L^2$--rate $(1+t)^{-\frac{7}{4}}$, which is
faster than the $L^2$--rate $(1+t)^{-\frac{5}{4}}$ in \eqref{1.15}.
On the other hand, for the general case $N>2$, under the assumption
that $H^2$--norm of the initial data is small but its higher--order
norms can be arbitrarily large, we obtain the optimal decay rates of
the solution as well as its all--order spatial derivatives from
one--order to the highest-order ($N$--order), which are the same as
those of the heat equation. Finally, for well--chosen initial data,
we also show the lower bounds on the decay rates. It is worth
mentioning that it is rather difficult to obtain the lower bounds on
the decay rates by following the method of Evje, Wang and Wen
\cite{Evje9}.
\end{remark}

\begin{remark} In \cite{Bresch1, c1}, $\Delta P=P^+-P^-=0$
and a third order derivative of $\alpha^+\rho^+$ accounting for
internal capillary pressure forces are involved. In \cite{Evje9},
they considered the capillary pressure $\Delta
P=P^+-P^-=f(\alpha^-\rho^-)$, where $f$ satisfies the assumption
\eqref{1.12}. It should be mentioned that both in \cite{Bresch1, c1}
and in \cite{Evje9}, the capillary term  play a key role in the
proofs of their main results. Therefore, a natural and important
problem is that what will happen if $\Delta P=P^+-P^-=0$ and no
capillary term is involved. In fact, in this case, the linear system
of the model has zero eigenvalue which makes the problem become much
more difficult and complicated. This will be our future work.
\end{remark}

\begin{remark} Employing the pure energy method developed in
\cite{Guo1, zw2}, one can obtain the optimal decay rates but except
the highest--order one and low bounds on the optimal decay rates.
However, it seems impossible to get the optimal decay rate of the
highest--order spatial derivative of the solution and the low bounds
on the decay rates due to the strong ``degenerate" and ``nonlinear"
structure of the system.
\end{remark}

\smallskip
\smallskip

\indent Now, let us sketch the strategy of proving Theorem
\ref{1mainth} and explain some of the main difficulties and
techniques involved in the process. As mentioned before, the main
purpose of the present paper is to put forward a general framework
to derive the optimal decay rates of the solution as well as its
all--order spatial derivatives from one--order to the highest--order,
which are the same as those of the heat equation. Therefore,
compared to Evje, Wang and Wen \cite{Evje9}, we need to develop new
ingredients in the proof to handle the optimal decay rate of the
highest--order spatial derivative of the solution and the lower bound
on the optimal decay rate of the solution, which requires some new
ideas. More precisely, we will employ Hodge decomposition, the
low--frequency and high--frequency decomposition, delicate spectral
analysis and energy methods based on finite induction. Roughly
speaking, our proof mainly involves the following five steps.\par

First, we rewrite the system \eqref{1.9}--\eqref{1.10} in
perturbation form and analyze the spectral of the solution semigroup
to the corresponding linear system. We therefore encounter a
fundamental obstacle that the matrix $\mathcal A(\xi)$ in
\eqref{2.15} is an $8$--order matrix and is  not self--adjoint.
Particularly, it is easy to check that the the matrix $\mathcal
A(\xi)$ can not be diagonalizable (see \cite{Sideris} pp.807 for
example). Therefore, it seems impossible to apply the usual time
decay investigation through spectral analysis. To overcome this
difficulty, we will employ the Hodge decomposition technique
developed in \cite{Dan1, Dan2, Dan3} to split the linear system into
three systems. One has four distinct eigenvalues and the other two
are classic heat equations. By making careful pointwise estimates on
the Fourier transform of Green's function to the linearized
equations, we can obtain the desired linear $L^2$--estimates.\par
Second, we deduce the optimal convergence rate on
$\|\nabla^j(n^{+,l},u^{+, l},n^{-,l},u^{-, l})\|_{L^2}$ with $0\leq
j\leq N$. Our method is to use Duhamel's principle, linear
$L^2$--estimates, nonlinear energy estimates. However, we encounter
a difficulty from the fact that when we estimate the highest-order
term $\|\nabla^N(n^{+,l},u^{+, l},n^{-,l},u^{-, l})\|_{L^2}$, it
requires us to control the terms involving $\nabla^{N+1}(u^+, u^-)$
which however don't belong to the solution space. To get around this
difficulty, we separate the time interval into two parts and make
full use of the benefit of the low--frequency and high--frequency
decomposition to get our desired convergence rates (See the proof of
\eqref{3.52} for details).\par Third, we prove the optimal decay
rates for $\textbf{Case I: N=2}$. By virtue of Theorem
\ref{0mainth}, it suffices to prove \eqref{3.3}. To illustrate the
main idea of our approach, one may consider the linear system of
\eqref{2.11} and note that the corresponding linear energy equality
reads as: for $j=0, 1, 2$,
\begin{align}\label{1.27}
&\frac{1}{2}\frac{d}{dt}\int_{\mathbb{R}^3}\frac{\beta_1}{\beta_2}\left(|\nabla^jn^+|^2
+|\nabla^j u^+|^2\right)+\frac{\beta_4}{\beta_3}\left(|\nabla^j n^-|^2+|\nabla^j u^-|^2\right)dx\notag\\
&\quad+\int_{\mathbb{R}^3}\frac{\beta_1}{\beta_2}\left(\nu_{1}^{+}\left|\nabla^{j+1}
u^{+}\right|^{2}+\nu_{2}^{+}\left|\nabla^{j} \operatorname{div}
u^{+}\right|^{2}\right)+\frac{\beta_4}{\beta_3}\left(\nu_{1}^{-}\left|\nabla^{j+1}
u^{-}\right|^{2}+\nu_{2}^{-}\left|\nabla^{j} \operatorname{div}
u^{-}\right|^{2}\right)dx\\
&\quad\quad\quad\quad\quad\hspace{2.15cm} +\left\langle\nabla^{j}
\nabla n^{-}, \beta_{1} \nabla^{j}
u^{+}\right\rangle+\left\langle\nabla^{j} \nabla n^{+}, \beta_{4}
\nabla^{j} u^{-}\right\rangle=0.\notag
\end{align}
Notice that the energy equality \eqref{1.27} only gives the
dissipative estimates for $u^+$ and $u^-$. In order to explore the
dissipative estimates for the variables $n^+$ and $n^-$, one may
follow the method of Evje, Wang and Wen \cite{Evje9} by constructing
the interactive energy functionals between $u^+$ and $n^+$, and
$u^-$ and $n^-$, respectively to get for $j=0,1$
\begin{align}\begin{split}\label{1.28}\displaystyle \frac{\mathrm{d}}{\mathrm{d} t} &
\displaystyle\left\{\frac{\nu_1^++\nu_2^+}{2\beta_1\beta_2}\int_{\mathbb{R}^3}\left|\nabla\nabla^j
n^{+}\right|^{2}dx
+\left\langle\nabla^{j} u^{+}, \frac{1}{\beta_{2}} \nabla\nabla^{j} n^{+}\right\rangle\right\}\\
&\quad+\frac{\beta_{1}}{\beta_{2}}\int_{\mathbb{R}^3}\left|\nabla\nabla^{j} n^{+}\right|^{2}dx+
\left\langle\nabla\nabla^j n^{-}, \nabla\nabla^{j} n^{+}\right\rangle\\
&\leq\displaystyle
C\left(\frac{\beta_{1}}{\beta_{2}}\left\|\nabla^j\operatorname{div}u^{+}\right\|^{2}_{L^2}+
\delta_0\left(\left\|\nabla\nabla^{j}
\left(n^+,n^{-}\right)\right\|^{2}_{L^2}+\left\|\nabla^{j}
u^{+}\right\|^{2}_{H^1}\right)\right),\end{split}\end{align}
\begin{align}\begin{split}\label{1.29}\displaystyle \frac{\mathrm{d}}{\mathrm{d} t} &
\displaystyle\left\{\frac{\nu_1^-+\nu_2^-}{2\beta_3\beta_4}\int_{\mathbb{R}^3}\left|\nabla\nabla^j
n^{-}\right|^{2}dx
+\left\langle\nabla^{j} u^{-}, \frac{1}{\beta_{3}} \nabla\nabla^{j} n^{-}\right\rangle\right\}\\
&\quad+\frac{\beta_{4}}{\beta_{3}}\int_{\mathbb{R}^3}\left|\nabla\nabla^{j}
n^{+}\right|^{2}dx+
\left\langle\nabla\nabla^j n^{+}, \nabla\nabla^{j} n^{-}\right\rangle\\
&\leq\displaystyle
C\left(\frac{\beta_{4}}{\beta_{3}}\left\|\nabla^j\operatorname{div}u^{-}\right\|^{2}_{L^2}+
\delta_0\left(\left\|\nabla\nabla^{j}
\left(n^+,n^{-}\right)\right\|^{2}_{L^2}+\left\|\nabla^{j}
u^{-}\right\|^{2}_{H^1}\right)\right).\end{split}\end{align} This
means that to obtain the dissipative estimates for
$\nabla^2(n^+,n^-)$, one needs to do the energy estimate \eqref{1.27}
at both the $1$th and the $2$th levels. Consequently, one can only
show that the convergence rate of the second--order (highest--order)
spatial derivative of solution, i.e. $\|\nabla^{2}(n^+, u^+, n^-,
u^-)\|_{L^2}$ is the same as that of $\|\nabla(n^+, u^+, n^-,
u^-)\|_{L^2}$ (See the proof of Theorem \ref{0mainth} in Evje, Wang
and Wen \cite{Evje9} for details). In particular, this implies that
the convergence rate of the highest--order spatial derivative of the
solution is not optimal. To tackle with this difficulty, we will
make full use of the benefit of the low--frequency and high--frequency
decomposition and the key linear $L^2$--estimates. More precisely,
instead of using \eqref{1.28}--\eqref{1.29}, we will employ the new
interactive energy functionals between $u^{+,h}$ and $n^{+,h}$, and
$n^{-,h}$ and $u^{-,h}$, respectively to obtain
\begin{align}\begin{split}\label{1.30}\displaystyle \frac{\mathrm{d}}{\mathrm{d} t} &
\displaystyle\left\{\frac{\nu_1^++\nu_2^+}{2\beta_1\beta_2}\left\|\nabla^2
n^{+, h}\right\|^{2}_{L^2}
+\left\langle\nabla u^{+, h}, \frac{1}{\beta_{2}} \nabla^{2} n^{+, h}\right\rangle\right\}\\
&\quad+\frac{\beta_{1}}{\beta_{2}}\left\|\nabla^{2} n^{+, h}\right\|^{2}_{L^2}+\left\langle\nabla^2 n^{-, h}, \nabla^{2} n^{+, h}\right\rangle\\
&\lesssim\displaystyle
\left\|\nabla\operatorname{div}u^{+}\right\|^{2}_{L^2}+
\delta_0\left(\left\|\nabla^{2}
\left(n^+,n^{-}\right)\right\|^{2}_{L^2}+\left\|\nabla^{2}
u^{+}\right\|^{2}_{H^1}\right),\end{split}\end{align}
\begin{align}\begin{split}\label{1.31}\displaystyle \frac{\mathrm{d}}{\mathrm{d} t} &
\displaystyle\left\{\frac{\nu_1^-+\nu_2^-}{2\beta_3\beta_4}\left\|\nabla^2
n^{-, h}\right\|^{2}_{L^2}
+\left\langle\nabla u^{-, h}, \frac{1}{\beta_{3}} \nabla^{2} n^{-, h}\right\rangle\right\}\\
&\quad+\frac{\beta_{4}}{\beta_{3}}\left\|\nabla^{2} n^{-, h}\right\|^{2}_{L^2}+\left\langle\nabla^2 n^{+, h}, \nabla^{2} n^{-, h}\right\rangle\\
&\lesssim\displaystyle
\left\|\nabla^j\operatorname{div}u^{-}\right\|^{2}_{L^2}+
\delta_0\left(\left\|\nabla^{2}
\left(n^+,n^{-}\right)\right\|^{2}_{L^2}+\left\|\nabla^{2}
u^{-}\right\|^{2}_{H^1}\right).\end{split}\end{align} Next, we
choose three sufficiently small positive constants $C_1$, $C_3$ and
$C_4$, and define the temporal energy functional
\begin{equation}\nonumber\begin{split}
{\mathcal{E}}(t)&= \frac{\beta_{1}}{2 \beta_{2}} \left\|\nabla^{2}
n^{+}\right\|^{2}_{L^2} +\frac{\left(\nu_{1}^{+}+\nu_{2}^{+}\right)
C_{1}}{2 \beta_{1} \beta_{2}}\left\|\nabla^{2} n^{+,
h}\right\|^{2}_{L^2} +\frac{\beta_{1}}{2 \beta_{2}}
\left\|\nabla^{2}
u^{+}\right\|^{2}_{L^2}+\frac{C_{1}}{\beta_{2}}\left\langle\nabla
u^{+, h}, \nabla^2n^{+, h}\right\rangle\\
&\quad+\frac{C_3\beta_4}{2\beta_3}\left\|\nabla^{2}
n^{-}\right\|^{2}_{L^2}
+\frac{(\nu_1^-+\nu_2^-)C_4}{2\beta_3\beta_4}\left\|\nabla^{2} n^{-,
h}\right\|^{2}_{L^2} +\frac{C_3\beta_{4}}{2 \beta_{3}}
\left\|\nabla^{2} u^{-}\right\|^{2}_{L^2}
+\frac{C_4}{\beta_{3}}\left\langle\nabla u^{-, h}, \nabla^2n^{-,
h}\right\rangle,\end{split}\end{equation} which is equivalent to
$\|\nabla^2(n^+, u^+, n^-, u^-)\|_{L^{2}}^2$. Taking $j=2$ in
\eqref{1.27} and combining \eqref{1.30}--\eqref{1.31}, we deduce that
 \begin{equation}\label{1.32}\frac{{d}}{{d}t}\mathcal{E}(t)+C\mathcal{E}(t)\lesssim
 \|\nabla^2(n^{+, l},u^{+, l},n^{-, l},u^{-, l})(t)\|^2_{L^{2}}.
\end{equation}
Consequently, by combining \eqref{1.32} with the optimal convergence
rate of $\|\nabla^2(n^{+, l},u^{+, l},n^{-, l},u^{-, l})\|_{L^{2}}$
obtained in Step 2, we can prove \eqref{3.3} and thus complete the
proof of the optimal decay rates for $\textbf{Case I: N=2}$.
However, for the nonlinear problem \eqref{2.11}, it is much more
complicated due to the nonlinear estimates. Compared to
\cite{Evje9}, our nonlinear energy estimate is new and different.
This is another point of this article. Indeed, in Lemma \ref{3.2}
and Lemma \ref{3.3} of \cite{Evje9}, the terms $\|\nabla u^{\pm}\|_{L^2}^2$ are
involved in the right--hand side of energy inequality. Therefore, we
can not follow the method of \cite{Evje9} directly to
close the energy estimate of $\nabla^2(n^{+},u^{+},n^{-},u^{-})$. To get around
this problem, one main observation of this paper is to take full
advantage of the smallness of $\|\nabla u^{\pm}\|_{L^\infty}$. More
precisely, by Sobolev inequality, one has
\begin{equation}\nonumber\|\nabla u^{\pm}\|_{L^\infty}\lesssim
\|\nabla^2 u^{\pm}\|_{L^2}^{\frac{1}{2}}\|\nabla^3
u^{\pm}\|_{L^2}^{\frac{1}{2}}.
\end{equation}
Noticing \eqref{3.2} and the fact that the term involving $\nabla^3
u^{\pm}$ can be absorbed by the left--hand side of energy
inequality, it is clear that the term $\|\nabla
u^{\pm}(t)\|_{L^\infty}$ can provide a smallness of the order
$\delta_0^{\frac{1}{2}}$. In view of this key observation, we can
close our desired energy estimates (See the proofs of
\eqref{3.12}--\eqref{3.13} and \eqref{3.27}--\eqref{3.28} for
details).
\par Fourth, we prove the optimal decay
rates for {\bf{Case II:} $\bf{N>2}$}. The main novelty of the
present paper in this step is to apply finite mathematical induction to close our energy estimates. More specifically, for any
$0\leq\ell\leq N$, we define the time--weighted energy functional as
\begin{equation}\nonumber \mathcal{E}_{\ell}^{N}(t)=\sup\limits_{0\leq\tau\leq t}\left
\{(1+\tau)^{\frac{3}{4}+\frac{\ell}{2}}\|\nabla^{\ell}(n^+, u^+,
n^-, u^-)(\tau)\|_{H^{N-\ell}}\right \}.
\end{equation}
Therefore, it suffices to show that for any $0\le \ell\le N$,
$\mathcal{E}_{\ell}^{N}(t)$ has an uniform time--independent bound.
We will employ mathematical induction to achieve this
goal (See the proofs of Lemma \ref{Lemma3.1} and Lemma
\ref{Lemma3.2}). We remark that the benefit of the low--frequency
and high--frequency decomposition and the key linear $L^2$--estimates
obtained in Step 2 play a crucial role in the process. Moreover,
compared to {\bf{Case I:} $\bf{N=2}$}, the energy estimates are much
complicated and subtle. For example, as in {\bf{Case I:}
$\bf{N=2}$}, we also need the smallness of the term $\|\nabla
u^{\pm}\|_{L^\infty}$. However, we can not follow the method used in
{\bf{Case I:} $\bf{N=2}$} since the term involving $\nabla^3
u^{\pm}$ can not be absorbed by the left--hand side of the energy
inequality. Instead, we will apply \eqref{3.2} with $N=3$ and
\eqref{3.3} to get
\begin{equation}\nonumber\|\nabla u^{\pm}\|_{L^\infty}\lesssim
\|\nabla^2 u^{\pm}\|_{L^2}^{\frac{1}{2}}\|\nabla^3
u^{\pm}\|_{L^2}^{\frac{1}{2}}\lesssim (1+t)^{-\frac{7}{8}},
\end{equation}
which implies that the term  $\|\nabla u^{\pm}\|_{L^\infty}$ can
provide a smallness of the order $(1+t)^{-\frac{7}{8}}$ if $t$ large
enough. We remark that the smallness of $\|\nabla
u^{\pm}\|_{L^\infty}$ plays a crucial role in closure of our energy estimates.

\par In the last step, we show the lower bounds on the convergence
rates of solutions. To do this, we will employ Plancherel theorem
and careful analysis on the solution semigroup. First, we derive the
convergence rate in $\dot H^{-1}$. Then, we can prove the lower
bound on the convergence rates in both $L^r$--norm for $2\leq r
\leq\infty$ and $L^2$--norm for the higher--order spatial
derivatives by an interpolation trick.\par The rest of this paper is
organized as follows. In the next section, we first rewrite the
Cauchy problem \eqref{1.9}--\eqref{1.10}. Then, we take Hodge
decomposition to the corresponding linear system, and get desired
linear estimates by making careful spectral analysis on the linear
system. In section 3, we first prove the optimal convergence rates
of the solutions for {\bf{Case I:} $\bf{N=2}$}. Then, we deal with
{\bf{Case II:} $\bf{N>2}$} by induction, and then show the
lower--bounds on the convergence rates in both $L^r$--norm for
$2\leq r \leq\infty$ and $L^2$--norm for the higher--order spatial
derivatives by an interpolation trick.

\bigskip

\section{Spectral analysis and linear $L^2$--estimates}
\setcounter{equation}{0}
\subsection{Reformulation} In this subsection, we first reformulate the system.
Setting   \[ n^{\pm}=R^{\pm}-1,
\]
system \eqref{1.9} can be rewritten as
\begin{equation}\label{2.1}
\left\{\begin{array}{l}
\partial_{t} n^++\operatorname{div}u^+=F_1, \\
\partial_{t}u^{+}+\alpha_1\nabla n^++\alpha_2\nabla
n^--\nu^+_1\Delta u^+-\nu^+_2\nabla\operatorname{div} u^+=F_2, \\
\partial_{t} n^-+\operatorname{div}u^-=F_3, \\
\partial_{t}u^{-}+\alpha_3\nabla n^++\alpha_4\nabla
n^--\nu^-_1\Delta u^--\nu^-_2\nabla\operatorname{div} u^-=F_4, \\
\end{array}\right.
\end{equation}
where $\nu_{1}^{\pm}=\frac{\mu^{\pm}}{\rho^{\pm}(1,1)}$,
$\nu_{2}^{\pm}=\frac{\mu^{\pm}+\lambda^{\pm}}{\rho^{\pm}(1,1)}>0$,
$\alpha_{1}=\frac{\mathcal{C}^{2}(1,1)
\rho^{-}(1,1)}{\rho^{+}(1,1)}$,
$\alpha_{2}=\mathcal{C}^{2}(1,1)+\frac{\mathcal{C}^{2}(1,1)
\alpha^{-}(1,1) f^{\prime}(1)}{s_{-}^{2}(1,1)}$,
$\alpha_{3}=\mathcal{C}^{2}(1,1)$,
$\alpha_{4}=\frac{\mathcal{C}^{2}(1,1)
\rho^{+}(1,1)}{\rho^{-}(1,1)}-\frac{\mathcal{C}^{2}(1,1)
\alpha^{+}(1,1) f^{\prime}(1)}{s_{+}^{2}(1,1)}$,
 and
the nonlinear terms are given by

\begin{align}
\label{2.2}F_{1}=&-\operatorname{div}\left(n^{+} u^{+}\right), \\
F_{2}^{i}=&-g_+\left(n^{+}, n^{-}\right) \partial_{i} n^{+}-\bar{g}_{+}\left(n^{+}, n^{-}\right) \partial_{i} n^{-}
-\left(u^{+} \cdot \nabla\right) u_{i}^{+} \nonumber\\
&+\mu^{+} h_{+}\left(n^{+}, n^{-}\right) \partial_{j}
n^{+} \partial_{j} u_{i}^{+}+\mu^{+} k_{+}\left(n^{+}, n^{-}\right) \partial_{j} n^{-} \partial_{j} u_{i}^{+} \nonumber\\
\label{2.3}&+\mu^{+} h_{+}\left(n^{+}, n^{-}\right) \partial_{j}
n^{+} \partial_{i} u_{j}^{+}+\mu^{+} k_{+}\left(n^{+}, n^{-}\right)
\partial_{j} n^{-}
 \partial_{i} u_{j}^{+}\\
&+\lambda^{+} h_{+}\left(n^{+}, n^{-}\right) \partial_{i} n^{+}
\partial_{j} u_{j}^{+}+\lambda^{+} k_{+}\left(n^{+},
n^{-}\right) \partial_{i} n^{-} \partial_{j} u_{j}^{+} \nonumber\\
&+\mu^{+} l_{+}\left(n^{+}, n^{-}\right) \partial_{j}^{2} u_{i}^{+}+\left(\mu^{+}+\lambda^{+}\right) l_{+}\left(n^{+}, n^{-}\right) \partial_{i}
 \partial_{j} u_{j}^{+}, \nonumber\\
\label{2.4}F_{3}=&-\operatorname{div}\left(n^{-} u^{-}\right), \\
F_{4}^{i}=&-g_-\left(n^{+}, n^{-}\right) \partial_{i} n^{-}-
\bar{g}_{-}\left(n^{+}, n^{-}\right) \partial_{i} n^{+}-\left(u^{-} \cdot \nabla\right) u_{i}^{-}\nonumber \\
&+\mu^{-} h_{-}\left(n^{+}, n^{-}\right) \partial_{j} n^{+} \partial_{j} u_{i}^{-}+\mu^{-} k_{-}\left(n^{+}, n^{-}\right)
\partial_{j} n^{-} \partial_{j} u_{i}^{-} \nonumber\\
\label{2.5}&+\mu^{-} h_{-}\left(n^{+}, n^{-}\right) \partial_{j}
n^{+}
\partial_{i} u_{j}^{-}+\mu^{-} k_{-}\left(n^{+}, n^{-}\right)
 \partial_{j} n^{-} \partial_{i} u_{j}^{-} \\
&+\lambda^{-} h_{-}\left(n^{+}, n^{-}\right) \partial_{i} n^{+} \partial_{j} u_{j}^{-}+\lambda^{-} k_{-}\left(n^{+}, n^{-}\right)
 \partial_{i} n^{-} \partial_{j} u_{j}^{-}\nonumber \\
&+\mu^{-} l_{-}\left(n^{+}, n^{-}\right) \partial_{j}^{2}
u_{i}^{-}+\left(\mu^{-}+\lambda^{-}\right) l_{-}\left(n^{+},
n^{-}\right) \partial_{i} \partial_{j} u_{j}^{-},\nonumber
\end{align}

where
\begin{equation}\label{2.6}
\left\{\begin{array}{l}
g_{+}\left(n^{+}, n^{-}\right)=\frac{\left(\mathcal{C}^{2} \rho^{-}\right)\left(n^{+}+1, n^{-}+1\right)}{\rho^{+}\left(n^{+}+1, n^{-}+1\right)}-\frac{\left(\mathcal{C}^{2} \rho^{-}\right)(1,1)}{\rho^{+}(1,1)}, \\
g_{-}\left(n^{+}, n^{-}\right)=\frac{\left(\mathcal{C}^{2} \rho^{+}\right)\left(n^{+}+1, n^{-}+1\right)}{\rho^{-}\left(n^{+}+1, n^{-}+1\right)}-\frac{\left(\mathcal{C}^{2} \rho^{+}\right)(1,1)}{\rho^{-}(1,1)}-\frac{f^{\prime}\left(n^{-}+1\right)\left(\mathcal{C}^{2} \alpha^{+}\right)\left(n^{+}+1, n^{-}+1\right)}{s_{+}^{2}\left(n^{+}+1, n^{-}+1\right)} \\
\hspace{2.2cm}+\frac{f^{\prime}(1)\left(\mathcal{C}^{2} \alpha^{+}\right)(1,1)}{s_{+}^{2}(1,1)}, \\
\end{array}\right.\end{equation}

\begin{equation}\label{2.7}
\left\{\begin{array}{l}
\bar{g}_{+}\left(n^{+}, n^{-}\right)=\mathcal{C}^{2}\left(n^{+}+1, n^{-}+1\right)-\mathcal{C}^{2}\left(1, 1\right)
+\frac{f^{\prime}\left(n^{-}+1\right)\left(\mathcal{C}^{2} \alpha^{-}\right)\left(n^{+}+1, n^{-}+1\right)}{s_{-}^{2}\left(n^{+}+1, n^{-}+1\right)}\\
\hspace{2.2cm}-\frac{f^{\prime}(1)\left(\mathcal{C}^{2} \alpha^{-}\right)(1,1)}{s_{-}^{2}(1,1)},\\
\bar{g}_{-}\left(n^{+}, n^{-}\right)=\mathcal{C}^{2}\left(n^{+}+1, n^{-}+1\right)-\mathcal{C}^{2}(1,1),\\
\end{array}\right.
\end{equation}

\begin{equation}\label{2.8}
\left\{\begin{array}{l}
h_{+}\left(n^{+}, n^{-}\right)=\frac{\left(\mathcal{C}^{2}\alpha^{-}\right)\left(n^{+}+1, n^{-}+1\right)}{(n^++1)s_{-}^{2}\left(n^{+}+1, n^{-}+1\right)},\\
h_{-}\left(n^{+}, n^{-}\right)=-\frac{\left(\mathcal{C}^{2} \right)\left(n^{+}+1, n^{-}+1\right)}{(\rho^-s_{-}^{2})\left(n^{+}+1, n^{-}+1\right)},
\end{array}\right.
\end{equation}

\begin{equation}\label{2.9}
\left\{\begin{array}{l}
k_{+}\left(n^{+}, n^{-}\right)=-\left[\frac{\mathcal{C}^{2}\left(n^{+}+1, n^{-}+1\right)}{(n^++1)(s_{+}^{2}\rho^+)\left(n^{+}+1, n^{-}+1\right)}+\frac{f^{\prime}(n^-+1)\mathcal{C}^{2}\left(n^{+}+1, n^{-}+1\right)}{(\rho^+\rho^-s_{+}^{2}s_{-}^{2})\left(n^{+}+1, n^{-}+1\right)}\right],\\
k_{-}\left(n^{+}, n^{-}\right)=-\frac{\left(\alpha^+\mathcal{C}^{2}\right)\left(n^{+}+1, n^{-}+1\right)}{(n^-+1)s_{+}^{2}\left(n^{+}+1, n^{-}+1\right)}+\frac{f^{\prime}(n^-+1)\left(\alpha^+\mathcal{C}^{2}\right)\left(n^{+}+1, n^{-}+1\right)}{(\rho^-s_{+}^{2}s_{-}^{2})\left(n^{+}+1, n^{-}+1\right)},\\
\end{array}\right.
\end{equation}

\begin{equation}\label{2.10}
l_{\pm}(n^+, n^-)=\frac{1}{\rho_{\pm}\left(n^{+}+1, n^{-}+1\right)}-\frac{1}{\rho_{\pm}\left(1, 1\right)}.
\end{equation}
Taking change of variables by
\[
n^{+} \rightarrow \alpha_{1} n^{+}, \quad u^{+} \rightarrow \sqrt{\alpha_{1} u^{+}}, \quad n^{-} \rightarrow \alpha_{4} n^{-}, \quad u^{-} \rightarrow \sqrt{\alpha_{4} u^{-}},
\]
and setting
\[
\beta_{1}=\sqrt{\alpha_{1}}, \quad \beta_{2}=\frac{\alpha_{2} \sqrt{\alpha_{1}}}{\alpha_{4}},
\quad \beta_{3}=\frac{\alpha_{3} \sqrt{\alpha_{4}}}{\alpha_{1}}, \quad \beta_{4}=\sqrt{\alpha_{4}}
\]
and
\[
\beta^{+}=\sqrt{\frac{\beta_{1}}{\beta_{2}}}, \quad \beta^{-}=\sqrt{\frac{\beta_{4}}{\beta_{3}}},
\]
the Cauchy problem \eqref{2.1} and \eqref{1.10} can be reformulated as
\begin{equation}\label{2.11}
\left\{\begin{array}{l}
\partial_{t} n^{+}+\beta_{1} \operatorname{div} u^{+}=\mathcal{F}_{1}, \\
\partial_{t} u^{+}+\beta_{1} \nabla n^{+}+\beta_{2} \nabla n^{-}-v_{1}^{+} \Delta u^{+}-v_{2}^{+} \nabla \operatorname{div} u^{+}=\mathcal{F}_{2}, \\
\partial_{t} n^{-}+\beta_{4} \operatorname{div} u^{-}=\mathcal{F}_{3}, \\
\partial_{t} u^{-}+\beta_{3} \nabla n^{+}+\beta_{4} \nabla n^{-}-v_{1}^{-} \Delta u^{-}-v_{2}^{-} \nabla \operatorname{div} u^{-}=\mathcal{F}_{4},
\end{array}\right.
\end{equation}
with initial data
\begin{equation}\label{2.12}
\left(n^{+}, u^{+}, n^{-}, u^{-}\right)(x, 0)=\left(n_{0}^{+},
u_{0}^{+}, n_{0}^{-}, u_{0}^{-}\right)(x) \rightarrow(0,
\overrightarrow{0}, 0, \overrightarrow{0}), \quad \text { as }|x|
\rightarrow+\infty,
\end{equation}
where the nonlinear terms are given by
\[
\mathcal{F}_{1}=\alpha_{1} F_{1}\left(\frac{n^{+}}{\alpha_{1}}, \frac{u^{+}}{\sqrt{\alpha_{1}}}\right), \quad \mathcal{F}_{2}=\sqrt{\alpha_{1}} F_{2}
\left(\frac{n^{+}}{\alpha_{1}}, \frac{u^{+}}{\sqrt{\alpha_{1}}}, \frac{n^{-}}{\alpha_{4}}, \frac{u^{-}}{\sqrt{\alpha_{4}}}\right),
\]
and
\[
\mathcal{F}_{3}=\alpha_{4} F_{3}\left(\frac{n^{-}}{\alpha_{4}}, \frac{u^{-}}{\sqrt{\alpha_{4}}}\right), \quad \mathcal{F}_{4}=\sqrt{\alpha_{4}} F_{4}\left(\frac{n^{+}}{\alpha_{1}}, \frac{u^{+}}{\sqrt{\alpha_{1}}}, \frac{n^{-}}{\alpha_{4}}, \frac{u^{-}}{\sqrt{\alpha_{4}}}\right).
\]
Noticing that
\begin{equation}\label{2.13}
\beta_{1} \beta_{4}-\beta_{2} \beta_{3}=-\frac{\mathcal{C}^{2}(1,1) f^{\prime}(1)}{\sqrt{\alpha_{1} \alpha_{4}} \rho^{+}(1,1)}>0,
\end{equation}
it is clear that $\beta^+\beta^->1$. It should be mentioned  that
the relation \eqref{2.13} is possible, since the representation of
capillary pressure includes $f\not\equiv 0$ which is a strictly
decreasing function near 1. We remark that the relation \eqref{2.13}
plays a fundamental role in the proof of Theorem \ref{1mainth}.\par
Define
$\tilde{U}=(\tilde{n}^+,\tilde{u}^+,\tilde{n}^-,\tilde{u}^-)^t$. In
terms of the semigroup theory for evolutionary equation, we will
investigate the following initial value problem for the
corresponding linearized system of \eqref{2.11}:
\begin{equation}
\begin{cases}
\tilde{U}_t=\mathcal B\tilde{U},\\
\tilde{U}\big|_{t=0}={U}_0,
\end{cases}   \label{2.14}
\end{equation}
where the operator $\mathcal B$ is given by
\begin{equation}\nonumber\mathcal B=\begin{pmatrix}
0&-\beta_1\text{div}&0&0\\
-\beta_1\nabla&\nu_{1}^{+} \Delta+\nu_{2}^{+} \nabla \otimes \nabla&-\beta_2\nabla&0\\
0&0&0&-\beta_4\text{div}\\
-\beta_3\nabla&0&-\beta_4\nabla&\nu_{1}^{-} \Delta+\nu_{2}^{-} \nabla \otimes \nabla
\end{pmatrix}.\end{equation}
Applying the Fourier transform to the system \eqref{2.14}, one has
\begin{equation}
\begin{cases}
\widehat {\widetilde{{U}}}_t=\mathcal A(\xi)\widehat {\widetilde{U}},\\
\widehat {\widetilde{U}}\big|_{t=0}=\widehat U_0,
\end{cases}   \label{2.15}
\end{equation}
where $\widehat {\widetilde{U}}(\xi,t)=\mathfrak{F}(\widetilde{U}(x,t))$, $\xi=(\xi^1,\xi^2,\xi^3)^t$ and $\mathcal A(\xi)$ is defined by
\begin{equation}\nonumber\mathcal A(\xi)=\begin{pmatrix}
0&-{i}\beta_1\xi^t&0&0\\
-{i}\beta_1\xi&-\nu_1^+|\xi|^2{\rm I}_{3\times 3}-\nu_2^+\xi\otimes\xi&-{i}\beta_2\xi&0\\
0&0&0&- {i}\beta_4\xi^t\\
- {i}\beta_3\xi&0&- {i}\beta_4\xi&-\nu_1^-|\xi|^2{\rm I}_{3\times 3}-\nu_2^-\xi\otimes\xi
\end{pmatrix}.\end{equation}

To derive the linear time--decay estimates,  by using a real method
as in \cite{Li1,Mat1}, one need to make a detailed analysis on the
properties of the semigroup. Unfortunately, it seems untractable,
since the system \eqref{2.15} has eight equations and the matrix
$\mathcal A(\xi)$ can not be diagonalizable (see \cite{Sideris}
pp.807 for example). To tackle this issue, we employ the Hodge
decomposition of the system \eqref{2.14} such that the system
\eqref{2.14} can be decoupled into three systems. One has four
equations whose characteristic polynomial possesses four distinct
roots, and the other two are classic heat equations. This key
observation allows us to derive the optimal linear convergence
rates.

 Let
$\varphi^{\pm}=\Lambda^{-1}{\rm div}\tilde{u}^{\pm}$
be the ``compressible part" of the velocities $\tilde{u}^{\pm}$, and denote $\phi^{\pm}=\Lambda^{-1}{\rm curl}\tilde{u}^{\pm}$ (with $({\rm curl} z)_i^j
=\partial_{x_j}z^i-\partial_{x_i}z^j$) by the ``incompressible part"
of the velocities $\tilde{u}^{\pm}$. Then, we can rewrite
the system \eqref{2.14} as follows:
\begin{equation}\label{2.16}
\begin{cases}
\partial_t{\tilde{n}^+}+\beta_1\Lambda{\varphi^+}=0,\\
\partial_t{\varphi^+}-\beta_1\Lambda{\tilde{n}^+}-\beta_2\Lambda{\tilde{n}^-}+\nu^+\Lambda^2{\varphi^+}=0,\\
\partial_t{\tilde{n}^-}+\beta_4\Lambda{\varphi^-}=0,\\
\partial_t{\varphi^-}-\beta_3\Lambda{\tilde{n}^+}-\beta_4\Lambda{\tilde{n}^-}+\nu^-\Lambda^2{\varphi^-}=0,\\
(\tilde{n}^+, \varphi^+, \tilde{n}^-, \varphi^-)\big|_{t=0}=({n}^+_0, \Lambda^{-1}{\rm div}{u}^{+}_0, {n}^-_0, \Lambda^{-1}{\rm div}{u}^{-}_0)(x),\\
\end{cases}
\end{equation}
and
\begin{equation}\label{2.17}
\begin{cases}
\partial_t\phi^++\nu^+_1\Lambda^2\phi^+=0,\\
\partial_t\phi^-+\nu^-_1\Lambda^2\phi^-=0,\\
(\phi^+,\phi^-)\big|_{t=0}=(\Lambda^{-1}{\rm curl}{u}^{+}_0, \Lambda^{-1}{\rm curl}{u}^{-}_0)(x),
\end{cases}
\end{equation}
where $\nu^{\pm}=\nu^{\pm}_1+\nu^{\pm}_2$.
\smallskip
\subsection{Spectral analysis for IVP \eqref{2.16}} In terms of the semigroup theory, we may represent the IVP \eqref{2.16} for $\mathcal U=(\tilde{n}^+, \varphi^+, \tilde{n}^-, \varphi^-)^t$ as
\begin{equation}
\begin{cases}
\mathcal U_t=\mathcal B_1\mathcal U,\\
\mathcal U\big|_{t=0}=\mathcal U_0,
\end{cases}   \label{2.18}
\end{equation}
where the operator $\mathcal B_1$ is defined
by
\begin{equation}\nonumber\mathcal B_1=\begin{pmatrix}
0&-\beta_1\Lambda&0&0\\
\beta_1\Lambda&-\nu^+\Lambda^2&\beta_2\Lambda&0\\
0&0&0&-\beta_4\Lambda\\
\beta_3\Lambda&0&\beta_4\Lambda&-\nu^-\Lambda^2
\end{pmatrix}.\end{equation}
Taking the Fourier transform to the system \eqref{2.18}, we obtain
\begin{equation}
\begin{cases}
\widehat {\mathcal U}_t=\mathcal A_1(\xi)\widehat {\mathcal U},\\
\widehat {\mathcal U}\big|_{t=0}=\widehat {\mathcal U}_0,
\end{cases}   \label{2.19}
\end{equation}
where $\widehat {\mathcal U}(\xi,t)=\mathfrak{F}({\mathcal U}(x,t))$  and $\mathcal A_1(\xi)$ is given by
\begin{equation}\nonumber\mathcal A_1(\xi)=\begin{pmatrix}
0&-\beta|\xi|&0&0\\
\beta_1|\xi|&-\nu^+|\xi|^2&\beta_2|\xi|&0\\
0&0&0&-\beta_4|\xi|\\
\beta_3|\xi|&0&\beta_4|\xi|&-\nu^-|\xi|^2
\end{pmatrix}.\end{equation}
We compute the eigenvalues of the matrix $\mathcal A_1(\xi)$  from the determinant
\begin{equation}\begin{split}\label{2.20}&{\rm det}(\lambda{\rm I}-\mathcal A_1(\xi))\\
&=\lambda^4+(\nu^+|\xi|^2+\nu^-|\xi|^2)\lambda^3+(\beta_1^2|\xi|^2+\beta_4^2|\xi|^2+\nu^+\nu^-|\xi|^4)\lambda^2+(\nu^+\beta_4^2|\xi|^4+\nu^-\beta_1^2|\xi|^4)\lambda\\
&\quad+\beta_1^2\beta_4^2|\xi|^4-\beta_1\beta_2\beta_3\beta_4|\xi|^4\\
&=0,
\end{split}\end{equation}
which implies that the matrix $\mathcal A_1(\xi)$ possesses four different
eigenvalues:
\begin{equation}\nonumber
 \lambda_1=\lambda_1(|\xi|),\quad \lambda_2=\lambda_2(|\xi|),\quad
 \lambda_3=\lambda_3(|\xi|),\quad \lambda_4=\lambda_4(|\xi|).
\end{equation}
Consequently, the semigroup $e^{t\mathcal A_1}$ can be decomposed
into
\begin{equation}\label{2.21}
 \text{e}^{t\mathcal A_1(\xi)}=\sum_{i=1}^4\text{e}^{\lambda_it}P_i(\xi),
\end{equation}
where the projector $P_i(\xi)$ is defined by
\begin{equation}\label{2.22}
 P_i(\xi)=\prod_{j\neq i}\frac{\mathcal A_1(\xi)-\lambda_jI}{\lambda_i-\lambda_j}, \quad i,j=1,2,3,4.
\end{equation}
Thus, the solution of IVP \eqref{2.19} can be expressed as
\begin{equation}
\widehat {\mathcal U}(\xi,t)=\text{e}^{t\mathcal A_1(\xi)}\widehat {\mathcal U}_0(\xi)=\left(\sum_{i=1}^4
\text{e}^{\lambda_it}P_i(\xi)\right)\widehat {\mathcal U}_0(\xi).\label{2.23}
\end{equation}

To derive long time properties of  the semigroup
$\text{e}^{t\mathcal A_1}$ in $L^2$--framework, one need to analyze
the asymptotical expansions of $\lambda_i$, $P_i$ $(i =1, 2, 3, 4)$
and $\text{e}^{t\mathcal A_1(\xi)}$ in the low--frequency part.
Employing the similar argument of Taylor series expansion as in
\cite{Li1,Mat1}, we have the following lemma from tedious
calculations.
\begin{Lemma}\label{lemma2.1}
There exists a positive constants $\eta_1\ll 1 $ such that, for
$|\xi|\leq \eta_1$, the spectral has the following Taylor series
expansion:
\begin{equation}\label{2.24}
\left\{\begin{array}{lll}\displaystyle \lambda_1=-\left[\frac{\nu^++\nu^-}{4}-\frac{\nu^+(\beta_1^2-\beta_4^2)+\nu^-(\beta_4^2-\beta_1^2)}{8\kappa_1}\right]|\xi|^2+\sqrt{\kappa_2-\kappa_1}\text{i}|\xi|+\mathcal O(|\xi|^3),\\
\displaystyle\lambda_2=-\left[\frac{\nu^++\nu^-}{4}-\frac{\nu^+(\beta_1^2-\beta_4^2)+\nu^-(\beta_4^2-\beta_1^2)}{8\kappa_1}\right]|\xi|^2-\sqrt{\kappa_2-\kappa_1}\text{i}|\xi|+\mathcal O(|\xi|^3),
\\ \displaystyle  \lambda_3=-\left[\frac{\nu^++\nu^-}{4}+\frac{\nu^+(\beta_1^2-\beta_4^2)+\nu^-(\beta_4^2-\beta_1^2)}{8\kappa_1}\right]|\xi|^2
+\sqrt{\kappa_2+\kappa_1}\text{i}|\xi|+\mathcal O(|\xi|^3),
\\ \displaystyle  \lambda_4=-\left[\frac{\nu^++\nu^-}{4}+\frac{\nu^+(\beta_1^2-\beta_4^2)+\nu^-(\beta_4^2-\beta_1^2)}{8\kappa_1}\right]|\xi|^2-\sqrt{\kappa_2+\kappa_1}\text{i}|\xi|+\mathcal O(|\xi|^3),
\end{array}\right.
\end{equation}
where $\kappa_1=\sqrt{\frac{(\beta_1^2-\beta_4^2)^2}{4}+\beta_1\beta_2\beta_3\beta_4}$ and $\displaystyle\kappa_2=\frac{\beta_1^2+\beta_4^2}{2}$.
\end{Lemma}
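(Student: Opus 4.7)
The plan is to treat \eqref{2.20} as a one--parameter family of quartic polynomials in $\lambda$ and carry out a singular perturbation analysis near $|\xi|=0$. The natural rescaling is $\lambda=s|\xi|$, which removes the common factor $|\xi|^4$ and yields
\begin{equation}\nonumber
P(s,|\xi|):=s^4+(\nu^++\nu^-)|\xi|s^3+\bigl[(\beta_1^2+\beta_4^2)+\nu^+\nu^-|\xi|^2\bigr]s^2+(\nu^+\beta_4^2+\nu^-\beta_1^2)|\xi|s+(\beta_1^2\beta_4^2-\beta_1\beta_2\beta_3\beta_4),
\end{equation}
a polynomial whose coefficients are analytic in the real parameter $r=|\xi|$. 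Setting $r=0$ gives the biquadratic $s^4+(\beta_1^2+\beta_4^2)s^2+(\beta_1^2\beta_4^2-\beta_1\beta_2\beta_3\beta_4)=0$, and solving for $s^2$ by the quadratic formula produces $s^2=-\kappa_2\pm\kappa_1$ with $\kappa_1,\kappa_2$ exactly as defined in the statement.

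Next I would verify that the four limit roots are distinct and purely imaginary. Clearly $\kappa_1>0$ from its definition and positivity of the $\beta_j$, and a direct computation gives $\kappa_2^2-\kappa_1^2=\beta_1\beta_4(\beta_1\beta_4-\beta_2\beta_3)$, which is strictly positive by \eqref{2.13}. Hence $\kappa_2>\kappa_1>0$, both $\kappa_2\pm\kappa_1$ are positive, and the four simple roots at $r=0$ are $s^{(0)}\in\{\pm i\sqrt{\kappa_2-\kappa_1},\,\pm i\sqrt{\kappa_2+\kappa_1}\}$. Simplicity then lets me apply the implicit function theorem to $P(s,r)=0$ to obtain four analytic branches $s_j(r)$ near $r=0$, each with $s_j(0)=s_j^{(0)}$.

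To obtain the $|\xi|^2$ term in $\lambda_j=s_j(|\xi|)|\xi|$, I only need the first correction $s_j^{(1)}$, which is given by $s_j^{(1)}=-\partial_rP(s_j^{(0)},0)/\partial_sP(s_j^{(0)},0)$. The key algebraic shortcut is the identity $2(s_j^{(0)})^2+(\beta_1^2+\beta_4^2)=\pm 2\kappa_1$, with the upper sign for the pair $s^{(0)}=\pm i\sqrt{\kappa_2-\kappa_1}$ and the lower sign for $s^{(0)}=\pm i\sqrt{\kappa_2+\kappa_1}$. This collapses $\partial_sP(s_j^{(0)},0)=2s_j^{(0)}[2(s_j^{(0)})^2+(\beta_1^2+\beta_4^2)]$ to $\pm 4\kappa_1 s_j^{(0)}$, while $\partial_rP(s_j^{(0)},0)=s_j^{(0)}\bigl[(\nu^++\nu^-)(s_j^{(0)})^2+\nu^+\beta_4^2+\nu^-\beta_1^2\bigr]$; the common factor $s_j^{(0)}$ cancels, leaving a purely real ratio, which after using $(s_j^{(0)})^2=-\kappa_2\pm\kappa_1$ and $\kappa_2=(\beta_1^2+\beta_4^2)/2$ simplifies to the explicit coefficient displayed in \eqref{2.24}. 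Writing $\lambda_j=s_j^{(0)}|\xi|+s_j^{(1)}|\xi|^2+O(|\xi|^3)$ and reading off the real and imaginary parts finishes the proof.

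The computation itself is routine; the main care is bookkeeping, since the $\pm 2\kappa_1$ alternative means the two branches near $s^{(0)}=\pm i\sqrt{\kappa_2-\kappa_1}$ share one real quadratic coefficient while the two branches near $s^{(0)}=\pm i\sqrt{\kappa_2+\kappa_1}$ share the opposite sign, which is exactly the pattern visible in \eqref{2.24}. Higher--order terms in the expansion, if needed, can be produced by the same implicit--function scheme iteratively; here only the $O(|\xi|^3)$ remainder is recorded, so nothing further is required.
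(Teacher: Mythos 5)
Your proposal is correct and follows essentially the same route the paper takes: the paper itself gives no details, deferring to \cite{Li1,Mat1} and ``tedious calculations,'' and the standard method in those references is exactly the perturbative expansion of the characteristic roots near $|\xi|=0$. Your rescaling $\lambda=s|\xi|$, reduction to the biquadratic $s^4+(\beta_1^2+\beta_4^2)s^2+(\beta_1^2\beta_4^2-\beta_1\beta_2\beta_3\beta_4)=0$ at $r=0$, the observation that $\kappa_2^2-\kappa_1^2=\beta_1\beta_4(\beta_1\beta_4-\beta_2\beta_3)>0$ by \eqref{2.13} (so the four limit roots are distinct and purely imaginary), and the implicit-function-theorem computation $s_j^{(1)}=-\partial_rP/\partial_sP$ together with the identity $\partial_sP(s_j^{(0)},0)=\pm4\kappa_1 s_j^{(0)}$ reproduce \eqref{2.24} exactly, so this is a valid and cleanly organized completion of the paper's sketch.
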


By virtue of \eqref{2.22}--\eqref{2.24}, we can establish the
following estimates for the low--frequency part of the solutions
$\widehat{\mathcal U}(t,\xi)$ to the IVP \eqref{2.19}:
\begin{Lemma}\label{lemma2.2} Let $\displaystyle\bar{\nu}_1=\frac{\nu^++\nu^-}{4}-\frac{\nu^+(\beta_1^2-\beta_4^2)+\nu^-(\beta_4^2-\beta_1^2)}{8\kappa_1}>0$,
 we have
\begin{equation}\label{2.25}
 |\widehat{\tilde{n}^+}|,~|\widehat{\varphi^+}|,~ |\widehat{\tilde{n}^-}|,~ |\widehat{\varphi^-}|\lesssim \text{e}^{-\bar{\nu}_1|\xi|^2t}
 ( |\widehat{{n}^+_0}|+|\widehat{\varphi^+_0}|+ |\widehat{{n}^-_0}|+|\widehat{\varphi^-_0}|),
\end{equation}
for any $|\xi|\le \eta_1$.
\end{Lemma}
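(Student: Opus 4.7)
The plan is to combine the explicit spectral representation \eqref{2.23} of the semigroup $e^{t\mathcal{A}_1(\xi)}$ with the low-frequency Taylor expansion of Lemma \ref{lemma2.1}, then estimate each of the four pieces $e^{\lambda_i t}P_i(\xi)$ separately and sum. Since each component $\widehat{\tilde n^\pm}$, $\widehat{\varphi^\pm}$ of $\widehat{\mathcal{U}}$ is a linear combination of the entries of $\widehat{\mathcal{U}}_0$ with coefficients coming from $e^{t\mathcal{A}_1(\xi)}$, it suffices to show
\[
\|e^{t\mathcal{A}_1(\xi)}\|\lesssim e^{-\bar\nu_1|\xi|^2 t}\quad\text{for all }|\xi|\le \eta_1,
\]
where $\|\cdot\|$ is any matrix norm.

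First, I would control the exponential factors. From \eqref{2.24} the real parts satisfy
\[
\operatorname{Re}\lambda_i(|\xi|)\le -\bar\nu_1|\xi|^2+C|\xi|^3,\qquad i=1,2,3,4,
\]
(since $\bar\nu_1$ is precisely the smaller of the two quadratic coefficients appearing in \eqref{2.24}). After shrinking $\eta_1$ if necessary so that $C\eta_1\le \bar\nu_1/2$, this gives $|e^{\lambda_i t}|\le e^{-\frac{1}{2}\bar\nu_1|\xi|^2 t}$ uniformly in $|\xi|\le\eta_1$; the factor $\frac{1}{2}$ can be absorbed into the $\lesssim$ on the right-hand side of \eqref{2.25} (or, by a slight abuse, into $\bar\nu_1$ itself).

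Next, I would bound the projectors $\|P_i(\xi)\|$ uniformly for $|\xi|\le\eta_1$. The identity \eqref{2.22} expresses $P_i$ as a product of three factors $(\mathcal{A}_1(\xi)-\lambda_j I)/(\lambda_i-\lambda_j)$. All entries of $\mathcal{A}_1(\xi)$ are either $O(|\xi|)$ (off-diagonal blocks) or $O(|\xi|^2)$ (diagonal viscous terms), and each $\lambda_j$ is itself $O(|\xi|)$ by \eqref{2.24}, so $\|\mathcal{A}_1(\xi)-\lambda_j I\|=O(|\xi|)$. For the denominators, the key observation is that the leading imaginary parts in \eqref{2.24} are $\pm\sqrt{\kappa_2\pm\kappa_1}\,|\xi|$, and the assumption \eqref{2.13} guarantees $\beta_1\beta_4>\beta_2\beta_3$, which forces $\kappa_2-\kappa_1>0$ and $\kappa_1>0$; hence for all $i\ne j$,
\[
|\lambda_i-\lambda_j|\ge c|\xi|\quad\text{for }|\xi|\le\eta_1,
\]
with $c>0$ independent of $\xi$ (after possibly shrinking $\eta_1$ once more to control the $O(|\xi|^3)$ remainders). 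Therefore each factor in \eqref{2.22} is $O(1)$, and $\|P_i(\xi)\|\le C$.

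Combining these two estimates with \eqref{2.23} gives
\[
|\widehat{\mathcal{U}}(\xi,t)|\le \sum_{i=1}^4|e^{\lambda_i t}|\,\|P_i(\xi)\|\,|\widehat{\mathcal{U}}_0(\xi)|\lesssim e^{-\bar\nu_1|\xi|^2 t}|\widehat{\mathcal{U}}_0(\xi)|
\]
for $|\xi|\le\eta_1$, which is exactly \eqref{2.25}. The main obstacle, and the only delicate point, is the uniform boundedness of the projectors $P_i(\xi)$ as $|\xi|\to 0$: the denominators $\lambda_i-\lambda_j$ vanish linearly in $|\xi|$, and this degeneracy would blow up $P_i$ were it not compensated by the fact that the entries of $\mathcal{A}_1(\xi)-\lambda_j I$ also vanish linearly in $|\xi|$. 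Verifying this cancellation requires using the non-degeneracy $\kappa_2-\kappa_1>0$ (equivalently $\beta_1\beta_4-\beta_2\beta_3>0$ from \eqref{2.13}) in an essential way; without the capillary assumption \eqref{1.12} this separation of eigenvalues at the linear level would fail.
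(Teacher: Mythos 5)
Your argument is correct, and it takes a genuinely different route from the paper. The paper proves Lemma \ref{lemma2.2} by explicitly computing the Taylor expansion of each projector $P_i(\xi)$, exhibiting them as constant $4\times 4$ matrices plus $\mathcal{O}(|\xi|)$ remainders (equations \eqref{2.26}--\eqref{2.29}), and then reads off the bound directly from \eqref{2.23}--\eqref{2.24}. You instead bound $\|P_i(\xi)\|$ abstractly from the product formula \eqref{2.22} by a scaling argument: every entry of $\mathcal{A}_1(\xi)-\lambda_j I$ is $\mathcal{O}(|\xi|)$, while the eigenvalue gaps $|\lambda_i-\lambda_j|$ are bounded below by $c|\xi|$ because $\kappa_1>0$ and $\kappa_2-\kappa_1>0$ (the latter being the point where \eqref{2.13} enters, exactly as you observe via $\kappa_2^2-\kappa_1^2=\beta_1\beta_4(\beta_1\beta_4-\beta_2\beta_3)>0$). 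Your route is shorter and avoids the tedious matrix algebra. What it gives up is the structural information in \eqref{2.26}--\eqref{2.29}: the paper reuses those explicit leading-order projectors in the proof of Proposition \ref{Prop2.4} to establish the \emph{lower} bound \eqref{2.32}, where one needs to know that the $(1,1)$-entries of $P_1$ and $P_3$ are nonzero constants, not merely that the projectors are bounded. So the paper's explicit computation does double duty; your argument cleanly isolates Lemma \ref{lemma2.2} but would still require the explicit expansions later. Two minor precision points that apply equally to the paper and to your write-up: (i) the claim that $\bar\nu_1$ is the smaller of the two quadratic decay coefficients in \eqref{2.24} holds only when $(\nu^+-\nu^-)(\beta_1^2-\beta_4^2)\ge 0$; otherwise $\bar\nu_2$ should be used (but the lemma only needs $\min\{\bar\nu_1,\bar\nu_2\}$, and the paper's notation silently assumes this); (ii) as you correctly note, the factor $1/2$ coming from absorbing the $\mathcal{O}(|\xi|^3)$ remainder cannot literally be moved into a $\lesssim$ --- it has to be folded into a slightly smaller constant in the exponent, which is a harmless and standard abuse but worth stating explicitly.
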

\begin{proof} By virtue of formula \eqref{2.23} and Taylor series expansion of $\lambda_i$ $(1\leq i\leq 4)$ in \eqref{2.24}, we can represent $P_i$ $(1\leq i\leq 4)$ as follows:
{\small \begin{equation}\label{2.26}\begin{split}P_1(\xi)=&\begin{pmatrix}
\frac{2\kappa_1+\beta_4^2-\beta_1^2}{8\kappa_1}&
\frac{\beta_1(2\kappa_1+\beta_4^2-\beta_1^2)}{8\kappa_1\sqrt{\kappa_2-\kappa_1}}{i}&\frac{-\beta_1\beta_2}{4\kappa_1}&\frac{-\beta_1\beta_2\beta_4}{4\kappa_1\sqrt{\kappa_2-\kappa_1}}{i}\\
\frac{\beta_1(\beta_1^2-\beta_4^2-2\kappa_1)+2\beta_2\beta_3\beta_4}{8\kappa_1
\sqrt{\kappa_2-\kappa_1}}{i}&\frac{2\kappa_1+\beta_4^2-\beta_1^2}{8\kappa_1}&\frac{\beta_2\sqrt{\kappa_2-\kappa_1}}{4\kappa_1}{i}&\frac{-\beta_2\beta_4}{4\kappa_1}\\
\frac{-\beta_3\beta_4}{4\kappa_1}&\frac{-\beta_1\beta_3\beta_4}{4\kappa_1\sqrt{\kappa_2-\kappa_1}}{i}&\frac{2\kappa_1+\beta_1^2-\beta_4^2}{8\kappa_1}&\frac{\beta_4(2\kappa_1
+\beta_1^2-\beta_4^2)}{8\kappa_1\sqrt{\kappa_2-\kappa_1}}{i}\\
\frac{\beta_3\sqrt{\kappa_2-\kappa_1}}{4\kappa_1}{i}&\frac{-\beta_1\beta_3}{4\kappa_1}&\frac{\beta_4(\beta_4^2-\beta_1^2-2\kappa_1)+2\beta_1\beta_2\beta_3}{8\kappa_1\sqrt{\kappa_2-
\kappa_1}}{i}&\frac{2\kappa_1+\beta_1^2-\beta_4^2}{8\kappa_1}
\end{pmatrix}+\mathcal O(|\xi|),\end{split}\end{equation}

\begin{equation}\label{2.27}\begin{split}P_2(\xi)=&\begin{pmatrix}
\frac{2\kappa_1+\beta_4^2-\beta_1^2}{8\kappa_1}&\frac{-\beta_1(2\kappa_1+\beta_4^2-\beta_1^2)}{8\kappa_1\sqrt{\kappa_2-\kappa_1}}{i}&
\frac{-\beta_1\beta_2}{4\kappa_1}&\frac{\beta_1\beta_2\beta_4}{4\kappa_1\sqrt{\kappa_2-\kappa_1}}{i}\\
-\frac{\beta_1(\beta_1^2-\beta_4^2-2\kappa_1)+2\beta_2\beta_3\beta_4}{8\kappa_1\sqrt{\kappa_2-\kappa_1}}{i}&
\frac{2\kappa_1+\beta_4^2-\beta_1^2}{8\kappa_1}&-\frac{\beta_2\sqrt{\kappa_2-\kappa_1}}{4\kappa_1}{i}&\frac{-\beta_2\beta_4}{4\kappa_1}\\
\frac{-\beta_3\beta_4}{4\kappa_1}&\frac{\beta_1\beta_3\beta_4}{4\kappa_1\sqrt{\kappa_2-\kappa_1}}{i}&\frac{2\kappa_1+\beta_1^2-\beta_4^2}
{8\kappa_1}&-\frac{\beta_4(2\kappa_1+\beta_1^2-\beta_4^2)}{8\kappa_1\sqrt{\kappa_2-\kappa_1}}{i}\\
-\frac{\beta_3\sqrt{\kappa_2-\kappa_1}}{4\kappa_1}{i}&\frac{-\beta_1\beta_3}{4\kappa_1}&-\frac{\beta_4(\beta_4^2-\beta_1^2-2\kappa_1)+2\beta_1\beta_2\beta_3}
{8\kappa_1\sqrt{\kappa_2-\kappa_1}}{i}&\frac{2\kappa_1+\beta_1^2-\beta_4^2}{8\kappa_1}
\end{pmatrix}+\mathcal O(|\xi|),\end{split}\end{equation}

\begin{equation}\label{2.28}\begin{split}P_3(\xi)=&\begin{pmatrix}
\frac{2\kappa_1+\beta_1^2-\beta_4^2}{8\kappa_1}&\frac{\beta_1(2\kappa_1+\beta_1^2-\beta_4^2)}
{8\kappa_1\sqrt{\kappa_2+\kappa_1}}{i}&\frac{\beta_1\beta_2}{4\kappa_1}&\frac{\beta_1\beta_2\beta_4}{4\kappa_1\sqrt{\kappa_2+\kappa_1}}{i}\\
-\frac{\beta_1(\beta_1^2-\beta_4^2+2\kappa_1)+2\beta_2\beta_3\beta_4}{8\kappa_1\sqrt{\kappa_2+\kappa_1}}{i}&\frac{2\kappa_1+\beta_1^2-\beta_4^2}
{8\kappa_1}&-\frac{\beta_2\sqrt{\kappa_2+\kappa_1}}{4\kappa_1}{i}&\frac{\beta_2\beta_4}{4\kappa_1}\\
\frac{\beta_3\beta_4}{4\kappa_1}&\frac{\beta_1\beta_3\beta_4}{4\kappa_1\sqrt{\kappa_2+\kappa_1}}{i}&
\frac{2\kappa_1+\beta_4^2-\beta_1^2}{8\kappa_1}&\frac{\beta_4(2\kappa_1+\beta_4^2-\beta_1^2)}{8\kappa_1\sqrt{\kappa_2+\kappa_1}}{i}\\
-\frac{\beta_3\sqrt{\kappa_2+\kappa_1}}{4\kappa_1}{i}&\frac{\beta_1\beta_3}{4\kappa_1}&-\frac{\beta_4(\beta_4^2-\beta_1^2+2\kappa_1)
+2\beta_1\beta_2\beta_3}{8\kappa_1\sqrt{\kappa_2-\kappa_1}}{i}&\frac{2\kappa_1+\beta_4^2-\beta_1^2}{8\kappa_1}
\end{pmatrix}+\mathcal O(|\xi|),\end{split}\end{equation}

and
\begin{equation}\label{2.29}\begin{split}P_4(\xi)=&\begin{pmatrix}
\frac{2\kappa_1+\beta_1^2-\beta_4^2}{8\kappa_1}&-\frac{\beta_1(2\kappa_1+\beta_1^2-\beta_4^2)}
{8\kappa_1\sqrt{\kappa_2+\kappa_1}}{i}&\frac{\beta_1\beta_2}{4\kappa_1}&-\frac{\beta_1\beta_2\beta_4}{4\kappa_1\sqrt{\kappa_2+\kappa_1}}{i}\\
\frac{\beta_1(\beta_1^2-\beta_4^2+2\kappa_1)+2\beta_2\beta_3\beta_4}{8\kappa_1\sqrt{\kappa_2+\kappa_1}}{i}&\frac{2\kappa_1+\beta_1^2-\beta_4^2}
{8\kappa_1}&\frac{\beta_2\sqrt{\kappa_2+\kappa_1}}{4\kappa_1}{i}&\frac{\beta_2\beta_4}{4\kappa_1}\\
\frac{\beta_3\beta_4}{4\kappa_1}&-\frac{\beta_1\beta_3\beta_4}{4\kappa_1\sqrt{\kappa_2+\kappa_1}}{i}&
\frac{2\kappa_1+\beta_4^2-\beta_1^2}{8\kappa_1}&-\frac{\beta_4(2\kappa_1+\beta_4^2-\beta_1^2)}{8\kappa_1\sqrt{\kappa_2+\kappa_1}}{i}\\
\frac{\beta_3\sqrt{\kappa_2+\kappa_1}}{4\kappa_1}{i}&\frac{\beta_1\beta_3}{4\kappa_1}&\frac{\beta_4(\beta_4^2-\beta_1^2+2\kappa_1)+2\beta_1\beta_2\beta_3}
{8\kappa_1\sqrt{\kappa_2-\kappa_1}}{i}&\frac{2\kappa_1+\beta_4^2-\beta_1^2}{8\kappa_1}
\end{pmatrix}+\mathcal O(|\xi|),\end{split}\end{equation}}
for any $|\xi|\le \eta_1$. Therefore, \eqref{2.25} follows from
\eqref{2.23}-\eqref{2.24} and \eqref{2.26}-\eqref{2.29} immediately.
\end{proof}

With the help of \eqref{2.25}, we can get the following proposition which is concerned with
the optimal $L^2$--convergence rate on the low--frequency part of the
solution.
\begin{Proposition}[$L^2$--theory]\label{Prop2.3} For any $k> -\frac{3}{2}$, it holds that
\begin{equation}\|\nabla ^k\text{e}^{t\mathcal{A}_1}*\mathcal U^l(0)\|_{L^2}\lesssim(1+t)^{-\frac{3}{4}-\frac{k}{2}}\|\widehat {\mathcal U}^l(0)\|_{L^\infty},\label{2.30}\end{equation}
 for any $t\geq 0$.
\end{Proposition}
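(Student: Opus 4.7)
The plan is to combine the pointwise spectral bound of Lemma \ref{lemma2.2} with the Plancherel identity and a rescaling argument in Fourier space. First I would ensure that the low-frequency cutoff $\eta_0$ in the definition of $\mathcal{U}^l$ has been fixed small enough that $\eta_0 \le \eta_1$, so that the expansion of Lemma \ref{lemma2.2} is available on the whole support of $\widehat{\mathcal U^l}$. Then by Plancherel's theorem,
\begin{equation}\nonumber
\|\nabla^k e^{t\mathcal A_1}*\mathcal U^l(0)\|_{L^2}^2
=\int_{|\xi|\le \eta_0}|\xi|^{2k}\bigl|e^{t\mathcal A_1(\xi)}\widehat{\mathcal U}(0)\bigr|^2\,d\xi.
\end{equation}

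Next, I would apply the componentwise bound \eqref{2.25} of Lemma \ref{lemma2.2} to each entry of the vector $e^{t\mathcal A_1(\xi)}\widehat{\mathcal U}(0)$. This gives the pointwise estimate
\begin{equation}\nonumber
\bigl|e^{t\mathcal A_1(\xi)}\widehat{\mathcal U}(0)\bigr|\lesssim e^{-\bar\nu_1|\xi|^2 t}\bigl|\widehat{\mathcal U}(0)\bigr|
\lesssim e^{-\bar\nu_1|\xi|^2 t}\,\|\widehat{\mathcal U}^l(0)\|_{L^\infty},
\end{equation}
valid for $|\xi|\le \eta_0$. Substituting back into the Plancherel expression yields
\begin{equation}\nonumber
\|\nabla^k e^{t\mathcal A_1}*\mathcal U^l(0)\|_{L^2}^2
\lesssim \|\widehat{\mathcal U}^l(0)\|_{L^\infty}^2\int_{|\xi|\le \eta_0}|\xi|^{2k}e^{-2\bar\nu_1|\xi|^2 t}\,d\xi.
\end{equation}

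The remaining step is purely computational: pass to polar coordinates in $\mathbb R^3$ and perform the substitution $\eta=\sqrt{t}\,\xi$ to obtain
\begin{equation}\nonumber
\int_{|\xi|\le \eta_0}|\xi|^{2k}e^{-2\bar\nu_1|\xi|^2 t}\,d\xi
\lesssim t^{-\frac{3}{2}-k}\int_0^{\eta_0\sqrt{t}}r^{2k+2}e^{-2\bar\nu_1 r^2}\,dr
\lesssim (1+t)^{-\frac{3}{2}-k},
\end{equation}
where the last integral converges at $r=0$ precisely because of the hypothesis $2k+2>-1$, i.e.\ $k>-3/2$. Taking square roots produces the claimed decay $(1+t)^{-\frac{3}{4}-\frac{k}{2}}$. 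The handling of the $t\in[0,1]$ range follows from the uniform boundedness of the integrand on the compact set $|\xi|\le\eta_0$ when $k\ge 0$, and from the same polar-coordinate integrability when $-3/2<k<0$; this is why the result is formulated with $(1+t)$ rather than $t$. The only delicate point is the low-frequency integrability for negative $k$, which is exactly the content of the restriction $k>-3/2$; no further analytic difficulty arises because the heat-type Gaussian factor $e^{-\bar\nu_1|\xi|^2 t}$ extracted from the spectral analysis completely controls both the high-$|\xi|$ tail within the low-frequency ball and the time decay.
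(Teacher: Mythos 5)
Your proof is correct and follows essentially the same route as the paper: Plancherel's theorem, the pointwise Gaussian bound \eqref{2.25} from Lemma \ref{lemma2.2}, and a direct estimate of the resulting low-frequency integral. The only differences are cosmetic: you spell out the polar-coordinate substitution, the origin of the restriction $k>-\tfrac32$, and the $t\in[0,1]$ case, all of which the paper leaves implicit. (One trivial slip: after Plancherel you should write $\widehat{\mathcal U}^l(0)=\phi(\xi)\widehat{\mathcal U}(0)$ in the integrand, so the first display is really a $\le$ once you discard the $|\phi|^2\le 1$ factor, but this has no bearing on the conclusion.)
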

\begin{proof} Due to \eqref{2.25} and Plancherel theorem, we have
\begin{equation}\nonumber \begin{split}\|\nabla ^k\text{e}^{t\mathcal{A}_1}*\mathcal U^l(0)\|_{L^2}^2=
&~\big{\|}|\xi|^k\text{e}^{t\mathcal{A}_1(\xi)}\widehat{\mathcal U}^l(0)\big{\|}_{L^2}^2
\\ \lesssim &\int_{|\xi|\le \eta_1} \text{e}^{-2\bar{\nu}_1|\xi|^2t}|\xi|^{2k}|\widehat{\mathcal U}^l(0)|^2\mathrm{d}\xi \\
\lesssim &~(1+t)^{-\frac{3}{4}-\frac{k}{2}}\|\widehat {\mathcal
U}^l(0)\|_{L^\infty}^2,
\end{split}\end{equation}
which implies \eqref{2.30}. Therefore, the proof of Proposition
\ref{Prop2.3} has been completed.
\end{proof}

It is worth mentioning that the $L^2$-convergence rates derived
above are optimal. Indeed, we have the lower-bound on the
convergence rates which is stated in the following proposition.

\begin{Proposition}\label{Prop2.4} Assume that $({n}^+_0,\varphi^+_0,{n}^-_0,\varphi^-_0)\in L^1$ satisfies \begin{equation}\widehat{\varphi^+_0}(\xi)=\widehat{{n}^-_0}(\xi)=\widehat{\varphi^-_0}(\xi)=0 \quad \text{and} \quad |\widehat{{n}^+_0}(\xi)|\ge c_0,\label{2.31}\end{equation} for any $|\xi|\le\eta_1$. Then the global
solution $(\tilde{n}^+,\varphi^+,\tilde{n}^-,\varphi^-)$ of the IVP \eqref{2.19}
satisfies
\begin{equation}\min\left\{\|\tilde{n}^{+,l}\|_{L^2},\|{\varphi}^{+,l}\|_{L^2},\|\tilde{n}^{-,l}\|_{L^2},\|{\varphi}^{-,l}\|_{L^2}\right\}\gtrsim
c_0(1+t)^{-\frac{3}{4}}.\label{2.32}\end{equation}
for large enough $t$ .
\end{Proposition}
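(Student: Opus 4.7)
The plan is to exploit the explicit spectral representation derived in Lemma \ref{lemma2.1}--\ref{lemma2.2}. By hypothesis \eqref{2.31}, on $|\xi|\le \eta_1$ the initial data reduces to $\widehat{\mathcal{U}}_0(\xi)=(\widehat{n_0^+}(\xi),0,0,0)^t$, so the semigroup formula \eqref{2.23} gives, for each component $j\in\{1,2,3,4\}$,
\begin{equation*}
\widehat{\mathcal{U}}_j(\xi,t)=\sum_{i=1}^4 e^{\lambda_i t}[P_i(\xi)]_{j,1}\,\widehat{n_0^+}(\xi),
\end{equation*}
reducing the problem to analyzing four scalar oscillatory kernels built from the first columns of \eqref{2.26}--\eqref{2.29}.

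First I would exploit that $\lambda_1,\lambda_2$ (resp.\ $\lambda_3,\lambda_4$) are complex conjugates, and that the entries $[P_1]_{j,1},[P_2]_{j,1}$ (resp.\ $[P_3]_{j,1},[P_4]_{j,1}$) are either equal real numbers or opposite purely imaginary numbers at leading order. Pairing the conjugate terms yields
\begin{equation*}
\widehat{\mathcal{U}}_j(\xi,t)=\Bigl[c_1^{(j)}e^{-\bar\nu_1|\xi|^2 t}F_1^{(j)}(\omega_1|\xi|t)+c_2^{(j)}e^{-\bar\nu_2|\xi|^2 t}F_2^{(j)}(\omega_2|\xi|t)\Bigr]\widehat{n_0^+}(\xi)+\mathcal{O}(|\xi|)\,|\widehat{n_0^+}(\xi)|,
\end{equation*}
with $\omega_1=\sqrt{\kappa_2-\kappa_1}$, $\omega_2=\sqrt{\kappa_2+\kappa_1}$, $\bar\nu_2$ the positive coefficient of $-|\xi|^2$ appearing in $\lambda_3,\lambda_4$, $F_k^{(j)}\in\{\cos,\sin\}$, and explicit constants $c_k^{(j)}$ read off from \eqref{2.26}--\eqref{2.29}. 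Relation \eqref{2.13} forces $\kappa_1>0$, hence $\omega_1\ne\omega_2$.

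Next, after possibly shrinking $\eta_0$ so that $\eta_0/2\le \eta_1$, I would restrict the Plancherel integral for $\|\mathcal{U}_j^l\|_{L^2}^2$ to $|\xi|\le \eta_0/2$, where $\phi\equiv 1$ and $|\widehat{n_0^+}|\ge c_0$. Expanding $|\widehat{\mathcal U}_j|^2$ via $\cos^2 x=(1+\cos 2x)/2$, $\sin^2 x=(1-\cos 2x)/2$ and $\cos x\cos y=\tfrac{1}{2}[\cos(x-y)+\cos(x+y)]$ splits it into a non-oscillatory Gaussian piece $\tfrac{1}{2}[(c_1^{(j)})^2 e^{-2\bar\nu_1|\xi|^2 t}+(c_2^{(j)})^2 e^{-2\bar\nu_2|\xi|^2 t}]|\widehat{n_0^+}|^2$ and oscillatory cross terms. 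The non-oscillatory part integrates to at least $Cc_0^2(1+t)^{-3/2}$, provided at least one of $c_1^{(j)},c_2^{(j)}$ is nonzero. Each oscillatory integral has the form $\int_{|\xi|\le\eta_0/2} e^{-C|\xi|^2 t}\cos(\omega|\xi|t)\,d\xi$ with $\omega>0$; completing the square in $\xi$ yields a Gaussian-in-frequency gain $e^{-\omega^2 t/(4C)}$, so these terms are exponentially small in $t$. The $\mathcal{O}(|\xi|)$ Taylor remainder of the projectors contributes $\lesssim (1+t)^{-5/2}$ after squaring and is therefore negligible. Taking square roots then gives \eqref{2.32} for $t$ sufficiently large.

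The main obstacle is algebraic: verifying that for every $j\in\{1,2,3,4\}$ at least one of the leading coefficients $c_1^{(j)},c_2^{(j)}$ is nonzero. For $j=1$ (the density $\tilde n^+$) they are both positive and sum to $1$. For $j=3$ one finds $c_1^{(3)}=-c_2^{(3)}=-\tfrac{\beta_3\beta_4}{2\kappa_1}\ne 0$; the two modes cannot cancel in the non-oscillatory piece because $\omega_1\ne\omega_2$, so the product-to-sum expansion of $\cos(\omega_1|\xi|t)\cos(\omega_2|\xi|t)$ contains no constant component. For $j=2,4$ the leading coefficients involve factors such as $\tfrac{\beta_3\sqrt{\kappa_2\pm\kappa_1}}{2\kappa_1}$, nonzero by positivity of $\kappa_1$ and the physical positivity of the $\beta_i$. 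A secondary technical point is the $\mathcal{O}(|\xi|^3 t)$ phase correction in $\lambda_i$: the real part $-\bar\nu_k|\xi|^2+\mathcal{O}(|\xi|^3)$ stays below $-\bar\nu_k|\xi|^2/2$ once $\eta_1$ is small enough, and the perturbation of the imaginary part only alters the already exponentially small oscillatory integrals, so this correction is harmless in the large-$t$ regime.
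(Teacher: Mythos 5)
Your proposal follows essentially the same route as the paper: reduce the low-frequency spectral formula to the first column of the projectors, pair the conjugate eigenvalues to write the solution as two Gaussian-damped oscillatory modes, and then use Plancherel together with the product-to-sum identities to isolate a non-oscillatory Gaussian contribution of size $(1+t)^{-3/2}$ while the oscillatory cross-terms decay faster because $\kappa_1>0$ forces $\omega_1\neq\omega_2$. Two small imprecisions are worth flagging but do not break the argument: the oscillation $\cos(\omega|\xi|t)$ is radial rather than linear in $\xi$, so ``completing the square'' should really be read as a scaling-plus-Gaussian-Fourier-transform estimate in the radial variable; and for $j=2$ the leading coefficients are $\propto \beta_1(\beta_1^2-\beta_4^2\mp 2\kappa_1)+2\beta_2\beta_3\beta_4$ rather than the $\beta_3\sqrt{\kappa_2\pm\kappa_1}/(2\kappa_1)$ factors you quote (those belong to $j=4$), yet the non-vanishing of at least one of them still holds since their difference is $4\beta_1\kappa_1>0$, and indeed $c_2^{(2)}>0$ outright because $2\kappa_1>|\beta_1^2-\beta_4^2|$.
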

\begin{proof} Let $\bar{\nu}_2=\frac{\nu^++\nu^-}{4}+\frac{\nu^+(\beta_1^2-\beta_4^2)+\nu^-(\beta_4^2-\beta_1^2)}{8\kappa_1}>0$. Due to \eqref{2.31},
it follows from \eqref{2.23}--\eqref{2.24} and
\eqref{2.26}--\eqref{2.29} that
\begin{equation}\begin{split}\nonumber\widehat{\tilde{n}^{+,l}}\sim &\frac{2\kappa_1+\beta_4^2-
\beta_1^2}{4\kappa_1}\text{e}^{-\bar{\nu}_1|\xi|^2t}\cos\left(\sqrt{\kappa_2-\kappa_1}
|\xi|t+\mathcal
O(|\xi|^3)t\right)\widehat{\tilde{n}_0^{+,l}}\\&+\frac{2\kappa_1+\beta_1^2-\beta_4^2}{4\kappa_1}
\text{e}^{-\bar{\nu}_2|\xi|^2t}\cos\left(\sqrt{\kappa_2+\kappa_1}|\xi|t+\mathcal
O(|\xi|^3)t\right)\widehat{\tilde{n}_0^{+,l}}.\end{split}\end{equation}
This together with Plancherel theorem and the double angle formula
gives that
\begin{equation}\label{2.33}\begin{split}\|\tilde{n}^{+,l}\|_{L^2}^2=&~\|\widehat{\tilde{n}^{+,l}}\|_{L^2}^2\\
\ge
&~\frac{(2\kappa_1+\beta_4^2-\beta_1^2)^2c_0^2}{16\kappa_1^2}\int_{|\xi|\le\frac{\eta_1}{2}}\text{e}^{-2\bar{\nu}_1|\xi|^2t}\cos^2
\left[\sqrt{\kappa_2-\kappa_1}|\xi|+O(|\xi|^3)\right]t{d}\xi
\\ &+\frac{(2\kappa_1+\beta_1^2-\beta_4^2)^2c_0^2}{16\kappa_1^2}\int_{|\xi|\le\frac{\eta_1}{2}}\text{e}^{-
2\bar{\nu}_2|\xi|^2t}\cos^2
\left[\sqrt{\kappa_2+\kappa_1}|\xi|+O(|\xi|^3)\right]t{d}\xi\\& +\frac{[4\kappa_1^2+(\beta_1^2-\beta_4^2)^2]c_0^2}{8\kappa_1^2}\int_{|\xi|\le\frac{\eta_1}{2}}\text{e}^{-\frac{\nu^++\nu^-}{2}|\xi|^2t}\cos
\left[\sqrt{\kappa_2-\kappa_1}|\xi|+O(|\xi|^3)\right]t\\
&\quad\cdot\cos\left[\sqrt{\kappa_2+\kappa_1}|\xi|+O(|\xi|^3)\right]t{d}\xi\\
=&~\frac{(2\kappa_1+\beta_4^2-\beta_1^2)^2c_0^2}{32\kappa_1^2}\int_{|\xi|\le\frac{\eta_1}{2}}\text{e}^{-2\bar{\nu}_1|\xi|^2t}{d}\xi+
\frac{(2\kappa_1+\beta_1^2-\beta_4^2)^2c_0^2}{32\kappa_1^2}\int_{|\xi|\le\frac{\eta_1}{2}}\text{e}^{-
2\bar{\nu}_2|\xi|^2t}{d}\xi
\\ &+~\frac{(2\kappa_1+\beta_4^2-\beta_1^2)^2c_0^2}{32\kappa_1^2}\int_{|\xi|\le\frac{\eta_1}{2}}\text{e}^{-2\bar{\nu}_1|\xi|^2t}
\cos\left[{2{\sqrt{\kappa_2-\kappa_1}|\xi|+O(|\xi|^3)}}\right]t{d}\xi
\\ &+\frac{(2\kappa_1+\beta_1^2-\beta_4^2)^2c_0^2}{32\kappa_1^2}\int_{|\xi|\le\frac{\eta_1}{2}}\text{e}^{-
2\bar{\nu}_2|\xi|^2t}{\cos
\left[2\sqrt{\kappa_2+\kappa_1}|\xi|+O(|\xi|^3)\right]t}{d}\xi
\\& +\frac{[4\kappa_1^2+(\beta_1^2-\beta_4^2)^2]c_0^2}{16\kappa_1^2}\int_{|\xi|\le\frac{\eta_1}{2}}\text{e}^{-\frac{\nu^++\nu^-}{2}|\xi|^2t}{\cos
\left[\sqrt{\kappa_2-\kappa_1}|\xi|+\sqrt{\kappa_2+\kappa_1}|\xi|+O(|\xi|^3)\right]t}{d}\xi\\& +\frac{[4\kappa_1^2+(\beta_1^2-\beta_4^2)^2]c_0^2}{16\kappa_1^2}\int_{|\xi|\le\frac{\eta_1}{2}}\text{e}^{-\frac{\nu^++\nu^-}{2}|\xi|^2t}{\cos
\left[\sqrt{\kappa_2-\kappa_1}|\xi|-\sqrt{\kappa_2+\kappa_1}|\xi|+O(|\xi|^3)\right]t}{d}\xi
 \\ \gtrsim &c_0^2(1+t)^{-\frac{3}{2}},\end{split}\end{equation}
 if $t$ large enough.
 Using a similar procedure as in \eqref{2.33} to handle $\|(\varphi^{+,l},\tilde{n}^{-,l},\varphi^{-,l})\|_{L^2}$, one has \eqref{2.32}. Therefore, we have completed the proof of
 Proposition \ref{Prop2.4}.
\end{proof}

\smallskip
From the classic theory of the heat equation, it is clear that the solution $\mathcal V=(\phi^+, \phi^-)^t$  to the IVP \eqref{2.17} satisfies the following decay
estimates.
\begin{Proposition}[$L^2$--theory]\label{Prop2.5} For any $k> -\frac{3}{2}$, there exists a positive
constant $C$ which is independent of $t$ such that
\begin{equation}\|\nabla ^k\text{e}^{t\mathcal{A}_2}*\mathcal V^l(0)\|_{L^2}\leq
C(1+t)^{-\frac{3}{4}-\frac{k}{2}}\|\widehat {\mathcal V}^l(0)\|_{L^\infty},\nonumber\end{equation}
 for any $t\geq 0$.
\end{Proposition}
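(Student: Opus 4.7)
The plan is to mirror the argument used for Proposition \ref{Prop2.3}, exploiting the fact that the system \eqref{2.17} is already diagonal and consists of two decoupled classical heat equations, so no spectral analysis is required. First, I would apply the Fourier transform to \eqref{2.17} and solve each scalar ODE in $t$ explicitly to obtain
\begin{equation}\nonumber
\widehat{\phi^{\pm}}(\xi,t) = e^{-\nu_1^{\pm}|\xi|^2 t}\,\widehat{\phi_0^{\pm}}(\xi).
\end{equation}
Setting $\nu_0 = \min\{\nu_1^+,\nu_1^-\}>0$, Plancherel's theorem applied to the low-frequency restriction yields
\begin{equation}\nonumber
\|\nabla^k e^{t\mathcal{A}_2}*\mathcal V^l(0)\|_{L^2}^2
\lesssim \int_{|\xi|\le \eta_0} |\xi|^{2k}\,e^{-2\nu_0|\xi|^2 t}\,|\widehat{\mathcal V^l}(0,\xi)|^2\,d\xi.
\end{equation}

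Next, I would pull out the uniform bound $|\widehat{\mathcal V^l}(0,\xi)|\le \|\widehat{\mathcal V^l}(0)\|_{L^\infty}$ and pass to spherical coordinates in $\mathbb R^3$, reducing matters to the one-dimensional integral
\begin{equation}\nonumber
I(t) \;=\; \int_0^{\eta_0} r^{2k+2}\,e^{-2\nu_0 r^2 t}\,dr.
\end{equation}
The three-dimensional Jacobian $r^2$ guarantees integrability at the origin exactly when $2k+2>-1$, i.e. $k>-3/2$, which is precisely the hypothesis. For $t\le 1$ the factor $(1+t)^{-3/4-k/2}$ is of order one and $I(t)$ is trivially bounded; for $t\ge 1$ the change of variable $s = r\sqrt{2\nu_0 t}$ gives
\begin{equation}\nonumber
I(t) \;=\; (2\nu_0 t)^{-\frac{2k+3}{2}} \int_0^{\eta_0\sqrt{2\nu_0 t}} s^{2k+2}\,e^{-s^2}\,ds
\;\lesssim\; (1+t)^{-\frac{3}{2}-k},
\end{equation}
where the remaining integral is uniformly bounded (using $\int_0^\infty s^{2k+2}e^{-s^2}ds<\infty$ when $k>-3/2$). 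Taking square roots then yields the claimed $(1+t)^{-3/4-k/2}$ rate.

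No serious obstacle is expected here: the proposition is the heat-equation counterpart of Proposition \ref{Prop2.3} and is in fact strictly easier, since one does not need any Taylor expansion of eigenvalues or projectors. The only mild subtlety is tracking the singularity at $\xi=0$ when $k$ is negative, which is absorbed by the three-dimensional measure as indicated above. The uniform-in-time constant $C$ in the statement arises directly from the uniformly bounded Gaussian integral in the rescaled variable $s$.
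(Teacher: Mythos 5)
Your proof is correct and is essentially the argument the paper has in mind: the paper does not spell out a proof of Proposition \ref{Prop2.5}, dismissing it with the remark that it follows ``from the classic theory of the heat equation,'' and your computation (Fourier transform, explicit scalar solution $\widehat{\phi^{\pm}}=e^{-\nu_1^{\pm}|\xi|^2t}\widehat{\phi_0^{\pm}}$, Plancherel, $L^\infty$ bound on the initial Fourier data, spherical coordinates plus the rescaling $s=r\sqrt{2\nu_0 t}$) is exactly the standard heat-kernel estimate that remark refers to, and it mirrors the structure of the paper's proof of Proposition \ref{Prop2.3} while being simpler because no eigenvalue/projector expansion is needed.
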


\smallskip

By virtue of the definition of $\varphi^{\pm}$ and $\phi^{\pm}$, and the
fact that the relations
$$\tilde{u}^{\pm}=-\wedge^{-1}\nabla\varphi^{\pm}-\wedge^{-1}\text{div}\phi^{\pm}$$
involve pseudo-differential operators of degree zero, the estimates
in space $H^k(\mathbb R^3)$ for the original function $\tilde{u}^{\pm}$ will
be the same as for $(\varphi^{\pm}, \phi^{\pm})$. Combining Propositions
\ref{Prop2.3}, \ref{Prop2.4} and  \ref{Prop2.5}, we have the
following result concerning long time properties for the solution
semigroup $\text{e}^{-t\mathcal{A}}$.
\begin{Proposition}\label{Prop2.6} For any $k> -\frac{3}{2}$ and $2\leq r\leq \infty$. Assume
that the initial data $U_0\in L^1(\mathbb R^3)$,   then for any $t\ge 0$, the global solution $\tilde{U}=(\tilde{n}^+,\tilde{u}^+,\tilde{n}^-,\tilde{u}^-)^t$ of the IVP \eqref{2.14} satisfies
\begin{equation}\|\nabla ^k\text{e}^{t\mathcal{B}}\tilde{U}^l(0)\|_{L^2}\leq
C(1+t)^{-\frac{3}{4}-\frac{k}{2}}\|  \widehat
{\tilde{U}}^l(0)\|_{L^\infty}\leq
C(1+t)^{-\frac{3}{4}-\frac{k}{2}}\|
U(0)\|_{L^1}.\label{2.34}\end{equation} If additionally the initial
data satisfies \eqref{1.24}, we also have the following lower-bound
on convergence rate:
\begin{equation}\min\left\{\|\tilde{n}^{+,l}(t)\|_{L^2},\|\tilde{u}^{+,l}(t)\|_{L^2},\|\tilde{n}^{-,l}(t)\|_{L^2},\|\tilde{u}^{-,l}(t)\|_{L^2}\right\}\geq C_1N_0\sqrt{\delta_0}(1+t)^{-\frac{3}{4}},\label{2.35}
\end{equation}
 if $t$ large enough.
\end{Proposition}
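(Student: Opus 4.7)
The plan is to assemble Proposition \ref{Prop2.6} by combining the Hodge-decomposed estimates from Propositions \ref{Prop2.3}, \ref{Prop2.4}, \ref{Prop2.5}, together with the reconstruction formula $\tilde{u}^{\pm}=-\Lambda^{-1}\nabla\varphi^{\pm}-\Lambda^{-1}\mathrm{div}\,\phi^{\pm}$. The crucial structural point is that $\Lambda^{-1}\nabla$ and $\Lambda^{-1}\mathrm{div}$ are Fourier multipliers of degree zero (with symbols bounded by $1$), so they commute with the low-frequency cutoff $\phi(\xi)$ and are bounded on $L^2$. Hence the linear bound for any component of $\tilde{U}^l$ reduces to linear bounds on $(\tilde{n}^{\pm,l},\varphi^{\pm,l},\phi^{\pm,l})$.

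For the upper bound \eqref{2.34}, I would first write $\tilde{U}^l$ as the superposition of the solution $\mathcal{U}^l$ of \eqref{2.16} (which involves $(\tilde{n}^{\pm},\varphi^{\pm})$) and the solution $\mathcal{V}^l$ of the heat system \eqref{2.17} (which involves $\phi^{\pm}$), and note that the initial data for $\mathcal{U}^l$ and $\mathcal{V}^l$ are obtained from $U_0^l$ by applying the zero-order operators $\Lambda^{-1}\mathrm{div}$ and $\Lambda^{-1}\mathrm{curl}$. Proposition \ref{Prop2.3} applied to $\mathcal{U}^l$ and Proposition \ref{Prop2.5} applied to $\mathcal{V}^l$ yield
\[
\|\nabla^k\mathcal{U}^l(t)\|_{L^2}+\|\nabla^k\mathcal{V}^l(t)\|_{L^2}\lesssim(1+t)^{-\frac{3}{4}-\frac{k}{2}}\bigl(\|\widehat{\mathcal{U}}^l(0)\|_{L^\infty}+\|\widehat{\mathcal{V}}^l(0)\|_{L^\infty}\bigr).
\]
Using the boundedness of zero-order multipliers on both sides converts this into the desired bound on $\|\nabla^k e^{t\mathcal{B}}\tilde{U}^l(0)\|_{L^2}$, while the elementary inequality $|\widehat{f}(\xi)|\le\|f\|_{L^1}$ provides the final step $\|\widehat{\tilde{U}}^l(0)\|_{L^\infty}\le\|U(0)\|_{L^1}$.

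For the lower bound \eqref{2.35}, my first task is to translate the hypothesis \eqref{1.24} into the hypothesis \eqref{2.31} of Proposition \ref{Prop2.4}. Specifically, the condition $\Lambda^{-1}\mathrm{div}\,\widehat{u}_0^{\pm}(\xi)=0$ for $|\xi|\le\eta_1$ is exactly $\widehat{\varphi^{\pm}_0}(\xi)=0$ on that ball, while $\widehat{n}_0^-(\xi)=0$ and $|\widehat{n}_0^+(\xi)|\ge N_0\sqrt{\delta_0}$ give the remaining requirements with $c_0=N_0\sqrt{\delta_0}$. Proposition \ref{Prop2.4} then delivers
\[
\min\bigl\{\|\tilde{n}^{+,l}(t)\|_{L^2},\|\varphi^{+,l}(t)\|_{L^2},\|\tilde{n}^{-,l}(t)\|_{L^2},\|\varphi^{-,l}(t)\|_{L^2}\bigr\}\gtrsim N_0\sqrt{\delta_0}(1+t)^{-\frac{3}{4}}
\]
for large $t$. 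To promote this into a lower bound on $\|\tilde{u}^{\pm,l}\|_{L^2}$, I would use the orthogonality of the Helmholtz decomposition in Fourier space: since the symbol $i\xi/|\xi|$ of $\Lambda^{-1}\nabla$ is orthogonal in $\mathbb{C}^3$ to the range of the curl part, Plancherel's theorem yields $\|\tilde{u}^{\pm,l}(t)\|_{L^2}^2=\|\varphi^{\pm,l}(t)\|_{L^2}^2+\|\phi^{\pm,l}(t)\|_{L^2}^2\ge\|\varphi^{\pm,l}(t)\|_{L^2}^2$, which closes the estimate.

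The main technical obstacle I anticipate is the lower bound on $\|\tilde{u}^{\pm,l}\|_{L^2}$: Proposition \ref{Prop2.4} only gives information on the compressible part $\varphi^{\pm,l}$, and one must ensure that the incompressible part $\phi^{\pm,l}$ cannot conspire to cancel it. The orthogonality of the Helmholtz decomposition in Fourier space precisely rules this out, but it is the one point in the proof where one needs more than a black-box application of the previous propositions. Everything else is a book-keeping exercise verifying that the zero-order nature of $\Lambda^{-1}\nabla$, $\Lambda^{-1}\mathrm{div}$, and $\Lambda^{-1}\mathrm{curl}$ preserves both the low-frequency localization and the constants in the semigroup estimates.
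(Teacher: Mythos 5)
Your proposal matches the paper's approach: the paper states Proposition \ref{Prop2.6} as an immediate consequence of Propositions \ref{Prop2.3}--\ref{Prop2.5} plus the remark that the reconstruction $\tilde{u}^{\pm}=-\Lambda^{-1}\nabla\varphi^{\pm}-\Lambda^{-1}\operatorname{div}\phi^{\pm}$ involves only degree-zero pseudo-differential operators, and your argument is precisely the correct unpacking of that remark. The one place where you are more careful than the paper is the lower bound, where you rightly observe that zero-order boundedness alone is not enough and invoke Helmholtz orthogonality; this is indeed the step the paper's phrase ``the estimates will be the same'' tacitly relies on. One small inaccuracy: the exact Plancherel identity is $\|\tilde{u}^{\pm,l}\|_{L^2}^2=\|\varphi^{\pm,l}\|_{L^2}^2+\tfrac12\|\phi^{\pm,l}\|_{L^2}^2$ (a factor of $\tfrac12$ appears because $\phi$, defined via $(\operatorname{curl}z)_i^j=\partial_{x_j}z^i-\partial_{x_i}z^j$, is an antisymmetric matrix whose Frobenius norm double-counts each component); this does not affect your conclusion, since the inequality $\|\tilde{u}^{\pm,l}\|_{L^2}\ge\|\varphi^{\pm,l}\|_{L^2}$ you actually use remains valid by the pointwise orthogonality in Fourier space of the parallel-to-$\xi$ and perpendicular-to-$\xi$ pieces of $\widehat{\tilde{u}^{\pm,l}}$.
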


\bigskip

\section{Optimal convergence rate}\label{1section_appendix}

In this section, we shall show the optimal convergence rate of the
solution stated in Theorem \ref{1mainth}. The global existence and
uniqueness of the solution in $H^2$ to the Cauchy problem
\eqref{2.11}--\eqref{2.12} have been proven in \cite{Evje9} based on
the classical energy method developed in \cite{Li1, Mat1}. Thus, we
can follow the proof of \cite{Evje9} step by step to obtain the
global existence and uniqueness of the solution in $H^N$ with an
integer $N\geq 2$, and thus we omit the details for the sake of
simplicity.

\smallskip
\begin{Theorem}\label{3.1mainth}

 Assume that $(n^+_0,u^+_0,n^-_0, u^-_0)\in H^N(\mathbb R^3)$ for an integer $N \geq 2$ and \eqref{1.12} holds.  There exists a constant $\delta_0>0$ such that if
\begin{equation}\label{3.1}\|(n^+_0,u^+_0,n^-_0, u^-_0)\|_{H^2}\leq \delta_0,\end{equation}
then the Cauchy problem \eqref{2.11}--\eqref{2.12} admits a unique globally classical solution $(n^+,u^+,n^-, u^-)$ such that for any $t\in[0,\infty),$
\begin{equation}\label{3.2}\begin{split}\|(n^+,u^+,n^-, u^-)(t)\|_{H^N}^2&+\int_0^t\left(\|\nabla(n^+, n^-)(\tau)\|^2_{H^{N-1}}+\|\nabla(u^+, u^-)(\tau)\|_{H^N}^2\right)\mathrm{d}\tau\\ \lesssim &~\|(n^+_0,u^+_0,n^-_0, u^-_0)\|_{H^N}^2.
\end{split}\end{equation}
\end{Theorem}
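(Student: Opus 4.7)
The plan is to combine the $H^2$ global well-posedness of Evje, Wang and Wen with a propagation-of-regularity argument at the $H^N$ level, the crucial point being that the smallness is imposed only on the $H^2$ norm while the higher-order norms may be arbitrarily large. I would proceed in three stages: (i) local existence in $H^N$ via a standard iteration scheme on the reformulated system \eqref{2.11}--\eqref{2.12}, exploiting the fact that the principal part is parabolic in $u^\pm$ and hyperbolic in $n^\pm$, with the algebraic closure furnished by the implicit function theorem applied to \eqref{1.8}; (ii) a priori $H^N$ energy estimates that close under the $H^2$ smallness assumption \eqref{3.1}; and (iii) a continuation argument that upgrades the local solution to a global one once a uniform bound on $\|(n^+,u^+,n^-,u^-)(t)\|_{H^2}$ is in hand (this already follows from the $H^2$ theory of \cite{Evje9}).

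For the a priori estimate, for each integer $0\le k\le N$ I would apply $\nabla^k$ to \eqref{2.11}, take the $L^2$ inner product of the $n^\pm$-equations with $\nabla^k n^\pm$ (weighted appropriately by $\beta_1/\beta_2$ and $\beta_4/\beta_3$) and of the $u^\pm$-equations with $\nabla^k u^\pm$. Summing, the skew-symmetric cross-pressure terms pair up precisely as in \eqref{1.27} thanks to the structural relation $\beta_1\beta_4-\beta_2\beta_3>0$ from \eqref{2.13}, yielding an energy identity whose dissipation captures $\|\nabla^{k+1}u^\pm\|_{L^2}^2$. To recover dissipation on $\nabla n^\pm$ at level $k$ (for $0\le k\le N-1$), I would construct interactive functionals of the type $\langle \nabla^k u^\pm, \nabla^{k+1} n^\pm\rangle$ exactly as in \eqref{1.28}--\eqref{1.29}. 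The nonlinear commutators $\nabla^k\mathcal{F}_i$ are handled by the Moser-type product estimate
\begin{equation*}
\|\nabla^k(fg)\|_{L^2}\lesssim \|f\|_{L^\infty}\|\nabla^k g\|_{L^2}+\|g\|_{L^\infty}\|\nabla^k f\|_{L^2},
\end{equation*}
together with the composition estimate for $g_\pm,\bar g_\pm,h_\pm,k_\pm,l_\pm$ vanishing at the origin, so that every term in $\nabla^k\mathcal{F}_i$ reduces to a product of an $L^\infty$-small factor (controlled by the $H^2$ norm via Sobolev embedding) and an $H^k$-factor (controlled by $\mathcal{E}_N$).

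Summing over $0\le k\le N$ produces
\begin{equation*}
\frac{d}{dt}\mathcal{E}_N(t)+c\,\mathcal{D}_N(t)\lesssim \bigl(\|(n^+,u^+,n^-,u^-)\|_{H^2}+\|\nabla u^\pm\|_{L^\infty}\bigr)\mathcal{D}_N(t),
\end{equation*}
where $\mathcal{E}_N\sim\|(n^+,u^+,n^-,u^-)\|_{H^N}^2$ and $\mathcal{D}_N\sim \|\nabla(n^+,n^-)\|_{H^{N-1}}^2+\|\nabla(u^+,u^-)\|_{H^N}^2$. Choosing $\delta_0$ small enough and using $\|\nabla u^\pm\|_{L^\infty}\lesssim \|\nabla u^\pm\|_{H^2}\lesssim \|(n^\pm,u^\pm)\|_{H^2}^{1/2}\mathcal{D}_N^{1/2}$, the right-hand side is absorbed, and Gr\"onwall plus integration in time delivers \eqref{3.2}. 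The main obstacle is the non-conservative pressure: applying $\nabla^N$ to terms like $\alpha^\pm\mathcal{C}^2\nabla R^\mp$ generates top-order commutators involving one derivative of the variable-coefficient $\alpha^\pm\mathcal{C}^2$, which cannot be integrated by parts away; they must instead be controlled via the $L^\infty$-smallness of $(n^+,n^-)$ coming from $H^2$, and this is exactly where the $\delta_0$-smallness of the $H^2$ norm (rather than $H^N$) is used in a decisive way. Once the $H^N$ estimate is closed, standard continuation and the uniqueness already established in \cite{Evje9} complete the proof.
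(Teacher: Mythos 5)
The paper itself does not actually prove Theorem \ref{3.1mainth}: it states that ``we can follow the proof of \cite{Evje9} step by step'' and omits all details. Your three-stage outline (local existence, a priori $H^N$ estimate, continuation) is the right scaffolding, and the structural features you invoke --- the weighted pairing exploiting $\beta_1\beta_4-\beta_2\beta_3>0$, the interactive functionals $\langle\nabla^k u^\pm,\nabla^{k+1}n^\pm\rangle$, the Moser product estimates --- are all consistent with the paper's machinery. So the overall route is plausible.

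However, your closing step contains a genuine gap. You claim an energy inequality of the form $\frac{d}{dt}\mathcal{E}_N+c\mathcal{D}_N\lesssim\bigl(\|(n^+,u^+,n^-,u^-)\|_{H^2}+\|\nabla u^\pm\|_{L^\infty}\bigr)\mathcal{D}_N$, and then absorb the right side by invoking $\|\nabla u^\pm\|_{L^\infty}\lesssim\|\nabla u^\pm\|_{H^2}\lesssim\|(n^\pm,u^\pm)\|_{H^2}^{1/2}\mathcal{D}_N^{1/2}$. The second inequality is false: $\|\nabla u^\pm\|_{H^2}$ contains $\|\nabla^3 u^\pm\|_{L^2}$, which is \emph{not} dominated by $\|(n^\pm,u^\pm)\|_{H^2}^{1/2}\mathcal{D}_N^{1/2}\sim\delta_0^{1/2}\mathcal{D}_N^{1/2}$ --- its square is comparable to $\mathcal{D}_N$ itself, with no small prefactor. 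The correct Gagliardo--Nirenberg bound $\|\nabla u^\pm\|_{L^\infty}\lesssim\|\nabla^2 u^\pm\|_{L^2}^{1/2}\|\nabla^3 u^\pm\|_{L^2}^{1/2}\lesssim\delta_0^{1/2}\mathcal{D}_N^{1/4}$ then makes your right side $\lesssim\delta_0\mathcal{D}_N+\delta_0^{1/2}\mathcal{D}_N^{5/4}$, and the superlinear term $\mathcal{D}_N^{5/4}$ cannot be absorbed by $c\mathcal{D}_N$ when $\mathcal{D}_N$ is large (which it can be, since only the $H^2$ norm is assumed small). The fix is to arrange the critical commutators differently: write $\|\nabla u^\pm\|_{L^\infty}\|\nabla^j n^\pm\|_{L^2}^2\lesssim\|\nabla u^\pm\|_{L^\infty}\mathcal{E}_N^{1/2}\mathcal{D}_N^{1/2}\le\epsilon\mathcal{D}_N+C_\epsilon\|\nabla u^\pm\|_{L^\infty}^2\mathcal{E}_N$, so the uncontrolled factor multiplies the \emph{energy}, not the dissipation. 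Then note that $\|\nabla u^\pm\|_{L^\infty}^2\lesssim\|\nabla(u^+,u^-)\|_{H^2}^2\in L^1_t$ with $\int_0^\infty\|\nabla(u^+,u^-)\|_{H^2}^2\,d\tau\lesssim\delta_0^2$ from the Evje--Wang--Wen $H^2$ theory, and Gr\"onwall yields $\mathcal{E}_N(t)\lesssim\mathcal{E}_N(0)\exp(C\delta_0^2)\lesssim\mathcal{E}_N(0)$, hence \eqref{3.2}. Without this split-and-$L^1_t$-Gr\"onwall structure, your proof as written does not close under $H^2$ smallness alone.
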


\bigskip
In what follows, we focus our attention on the proof of the optimal
convergence rate of the solution stated in Theorem \ref{1mainth}. We
first prove the upper-bound on the optimal convergence rate of the
solution stated in \eqref{1.22}--\eqref{1.23}. We will split the
proof into two cases: \textbf{Case I: $N=2$} and \textbf{Case II:
$N>2$}. To begin with, we deal with \textbf{Case I: $N=2$}. Owing to
Theorem \ref{0mainth}, it suffices to prove the following theorem.

\begin{Theorem}\label{3.2mainth}({\bf{Case I:} $\bf{N=2}$}) Assume that the hypotheses of Theorem \ref{3.1mainth}
and \eqref{1.21} are in force. Then there exists a positive constant
$C$, which is independent of $t$, such that
\begin{equation}\label{3.3}\|\nabla^2(n^+,u^+,n^-, u^-)(t)\|_{L^2}\leq ~C(1+t)^{-\frac{7}{4}},
\end{equation}
for all $t\geq 0$.
\end{Theorem}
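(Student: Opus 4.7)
The plan is to run a time-weighted energy argument at the second-derivative (highest-order) level that combines three ingredients: the standard energy identity at order $j=2$, new interactive functionals built from the \emph{high-frequency} components $(n^{\pm,h},u^{\pm,h})$ in place of the full functionals \eqref{1.28}--\eqref{1.29}, and the optimal linear decay rate on the low-frequency part inherited from Proposition \ref{Prop2.6}. Concretely, I would first use Duhamel's formula $U(t)=e^{t\mathcal B}U_0+\int_0^t e^{(t-s)\mathcal B}\mathcal F(U)(s)\,ds$ together with Proposition \ref{Prop2.6}, the smallness hypothesis \eqref{1.21}, and the decay already established in Theorem \ref{0mainth} to show that the low-frequency part obeys the sharp bound
\begin{equation*}
\|\nabla^{2}(n^{+,l},u^{+,l},n^{-,l},u^{-,l})(t)\|_{L^{2}}\lesssim (1+t)^{-\frac{7}{4}}.
\end{equation*}
Here the structure of the nonlinear source terms $\mathcal F_{1},\mathcal F_{3}$ (divergence form) and the quadratic character of $\mathcal F_{2},\mathcal F_{4}$ together with Theorem \ref{0mainth} ensure that the Duhamel integral does not degrade the linear rate.

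Next I would apply $\nabla^{2}$ to \eqref{2.11}, test against the weighted quantities $\frac{\beta_{1}}{\beta_{2}}\nabla^{2}(n^{+},u^{+})$ and $\frac{\beta_{4}}{\beta_{3}}\nabla^{2}(n^{-},u^{-})$, and integrate by parts to obtain the analog of \eqref{1.27} at $j=2$ with a nonlinear remainder; the algebraic identity $\beta_{1}\beta_{4}-\beta_{2}\beta_{3}>0$ from \eqref{2.13} makes the coupling terms manageable. This controls dissipation of $\nabla^{3}u^{\pm}$ only, so to recover dissipation of $\nabla^{2}n^{\pm}$ at the top order without needing a non-existent $\nabla^{3}n^{\pm}$ term in the energy space, I would build the high-frequency cross estimates \eqref{1.30}--\eqref{1.31}: project the momentum equations onto the high-frequency block and pair with $\nabla\nabla^{2}n^{\pm,h}$. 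The crucial structural point is that on the high-frequency zone $\|\nabla^{2}u^{\pm,h}\|_{L^{2}}\lesssim\|\nabla^{3}u^{\pm}\|_{L^{2}}$, so the time-derivative term produced by $\partial_{t}u^{\pm,h}$ is absorbable by the dissipation already produced in the previous step. Then with small fixed weights $C_{1},C_{3},C_{4}$ one defines $\mathcal E(t)$ as in the introduction, checks its equivalence with $\|\nabla^{2}(n^{+},u^{+},n^{-},u^{-})\|_{L^{2}}^{2}$, and derives
\begin{equation*}
\frac{d}{dt}\mathcal E(t)+C\mathcal E(t)\lesssim \|\nabla^{2}(n^{+,l},u^{+,l},n^{-,l},u^{-,l})(t)\|_{L^{2}}^{2}+\mathcal R(t),
\end{equation*}
where $\mathcal R(t)$ collects all nonlinear contributions. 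Inserting the Step~1 linear bound and a Gronwall argument then yields $\mathcal E(t)\lesssim(1+t)^{-7/2}$, which is exactly \eqref{3.3}.

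The main obstacle will be controlling $\mathcal R(t)$. Typical terms have the schematic form $\int\nabla^{2}(f\nabla g)\cdot\nabla^{2}h\,dx$ with $f,g,h\in\{n^{\pm},u^{\pm}\}$, producing derivatives up to order three that cannot be absorbed by the energy dissipation without extracting a small prefactor. Unlike \cite{Evje9}, where the analogous inequality carries a $\|\nabla u^{\pm}\|_{L^{2}}^{2}$ on the right-hand side and therefore cannot close at the optimal rate, the observation that saves us here is that at order two the dissipation provides a free copy of $\|\nabla^{3}u^{\pm}\|_{L^{2}}^{2}$, so the relevant $L^\infty$ norm can be bounded via Gagliardo--Nirenberg by
\begin{equation*}
\|\nabla u^{\pm}\|_{L^{\infty}}\lesssim \|\nabla^{2}u^{\pm}\|_{L^{2}}^{\frac{1}{2}}\|\nabla^{3}u^{\pm}\|_{L^{2}}^{\frac{1}{2}}\lesssim \delta_{0}^{\frac{1}{2}}\|\nabla^{3}u^{\pm}\|_{L^{2}}^{\frac{1}{2}},
\end{equation*}
where the $\|\nabla^{2}u^{\pm}\|_{L^{2}}^{1/2}$ factor is controlled by the a priori bound \eqref{3.2}. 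Combined with Hölder, Sobolev embedding $H^{2}\hookrightarrow L^{\infty}$, and the bound $\|(n^{+},u^{+},n^{-},u^{-})\|_{H^{2}}\lesssim\delta_{0}$, every nonlinear term in $\mathcal R(t)$ is either a $\delta_{0}$-small multiple of $\mathcal E(t)$ (absorbable on the left) or a small multiple of $\|\nabla^{3}u^{\pm}\|_{L^{2}}^{2}$ (absorbable into the dissipation from the Step~2 identity). Once this closure is achieved, the proof of \eqref{3.3} follows along the outline above.
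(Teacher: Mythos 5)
Your proposal matches the paper's own proof in all essentials: the same energy identity at order two, the same high-frequency interactive functionals \eqref{1.30}--\eqref{1.31} to recover dissipation of $\nabla^{2}n^{\pm,h}$, the same Duhamel/linear-decay argument to bound $\|\nabla^{2}(n^{+,l},u^{+,l},n^{-,l},u^{-,l})\|_{L^{2}}$, the same combination with small weights $C_{1},C_{3},C_{4}$ and small $\beta_{2}$ to close the differential inequality, and in particular the same key observation that $\|\nabla u^{\pm}\|_{L^{\infty}}\lesssim\|\nabla^{2}u^{\pm}\|_{L^{2}}^{1/2}\|\nabla^{3}u^{\pm}\|_{L^{2}}^{1/2}$ furnishes the needed smallness at second order. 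The only discrepancy is cosmetic (the interactive term is $\langle\nabla u^{\pm,h},\nabla^{2}n^{\pm,h}\rangle$ rather than a pairing against $\nabla\nabla^{2}n^{\pm,h}$), and the order in which the low-frequency estimate is established is different but immaterial.
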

\begin{proof}
We will make full use of the benefit of the low--frequency and
high--frequency decomposition to prove Theorem \ref{3.2mainth}.The
process involves the following three steps.\par
\textbf{{Step 1. $L^2$--estimate of $\nabla^2(n^+,u^+,n^-, u^-)$.}}
Multiplying $\nabla^{2}\eqref{2.11}_{1}, \nabla^{2}\eqref{2.11}_{2}, \nabla^{2}\eqref{2.11}_{3}$ and $\nabla^{2}\eqref{2.11}_{4}$ by $\frac{\beta_1}{\beta_{2}} \nabla^{2} n^{+}$,
$\frac{\beta_1}{\beta_2} \nabla^{2} u^{+}, \frac{\beta_4}{\beta_3} \nabla^{2} n^{-}$ and $\frac{\beta_4}{\beta_3} \nabla^{2} u^{-}$ respectively, and then integrating over $\mathbb{R}^{3},$ we obtain

\begin{align}\begin{split}\label{3.4}
\displaystyle\frac{1}{2} \frac{d}{d t} &\left\{\frac{\beta_1}{\beta_2}\left\|\nabla^{2} n^{+}\right\|^{2}+\frac{\beta_1}{\beta_2}\left\|\nabla^{2} u^{+}\right\|^{2}\right\}+\frac{\beta_1}{\beta_2}\left(\nu_{1}^{+}\left\|\nabla^{3} u^{+}\right\|^{2}+\nu_{2}^{+}\left\|\nabla^{2} \operatorname{div} u^{+}\right\|^{2}\right) \\
&=\left\langle\nabla^{2} \mathcal{{F}}_{1}, \frac{\beta_1}{\beta_2} \nabla^{2} n^{+}\right\rangle+\left\langle\nabla^{2} \mathcal{F}_{2}, \frac{\beta_1}{\beta_2} \nabla^{2} u^{+}\right\rangle-\left\langle\nabla^{2} \nabla n^{-}, \beta_{1} \nabla^{2} u^{+}\right\rangle \\
&=:I_{1}+I_{2}+I_{3},
\end{split}\end{align}

and
\begin{align}\begin{split}\label{3.5}
\frac{1}{2} \frac{\mathrm{d}}{\mathrm{d} t}&\left\{\frac{\beta_4}{\beta_3}\left\|\nabla^{2} n^{-}\right\|^{2}+\frac{\beta_4}{\beta_3}\left\|\nabla^{2} u^{-}\right\|^{2}\right\}+\frac{\beta_4}{\beta_3}\left(\nu_{1}^{-}\left\|\nabla^{3}u^{-}\right\|^{2}+\nu_{2}^{-}\left\|\nabla^{2} \operatorname{div} u^{-}\right\|^{2}\right)\\
&=\left\langle\nabla^{2} \mathcal{F}_{3}, \frac{\beta_{4}}{\beta_{3}} \nabla^{2} n^{-}\right\rangle+\left\langle\nabla^{2} \mathcal{F}_{4}, \frac{\beta_{4}}{\beta_{3}} \nabla^{2} u^{-}\right\rangle-\left\langle\nabla^{2} \nabla n^{+}, \beta_{4} \nabla^{2} u^{-}\right\rangle\\
&=:I_{4}+I_{5}+I_{6}.\end{split}\end{align}
\par \noindent By virtue of \eqref{3.2} and Lemmas \ref{1interpolation}--\ref{es-product}, we have from integration by parts and Young's inequality that
\begin{align}\label{3.6}
\left|I_{1}\right| &\lesssim \left|\left\langle\nabla^{2} n^{+}, \nabla^{2}\left(n^{+} \operatorname{div} u^{+}\right)\right\rangle\right|
+\left|\left\langle\nabla^{2} n^{+}, \nabla^{2}\left(\nabla n^{+} \cdot u^{+}\right)\right\rangle\right| \notag\\
& \lesssim\left\|\nabla^{2} n^{+}\right\|_{L^2}\left(\left\|\nabla^{2} n^{+}\right\|_{L^2}\left\|\nabla u^{+}\right\|_{L^{\infty}}
+\left\|n^{+}\right\|_{L^{\infty}}\left\|\nabla^{3} u^{+}\right\|_{L^2}\right) \notag\\
&\quad+\left\|\nabla^{2} n^{+}\right\|^{2}_{L^2}\left\|\nabla u^{+}\right\|_{L^{\infty}}+
\left\|\nabla^{2} n^{+}\right\|_{L^2}\left\|\nabla^{2}\left(\nabla n^{+} \cdot u^{+}\right)-\nabla^{3}  n^{+} \cdot u^{+}\right\|_{L^2} \notag\\
& \lesssim\left\|\nabla^{2} n^{+}\right\|_{L^2}\left(\left\|\nabla^{2} n^{+}\right\|_{L^2}\left\|\nabla^2 u^{+}\right\|_{L^2}^{\frac{1}{2}}\left\|\nabla^3 u^{+}\right\|_{L^2}^{\frac{1}{2}}
+\left\|n^{+}\right\|_{H^2}\left\|\nabla^{3} u^{+}\right\|_{L^2}\right)\\
&\quad+\left\|\nabla^{2} n^{+}\right\|_{L^2}\left(\left\|\nabla^{2} n^{+}\right\|_{L^2}\left\|\nabla^2 u^{+}\right\|_{L^2}^{\frac{1}{2}}\left\|\nabla^3 u^{+}\right\|_{L^2}^{\frac{1}{2}}
+\left\|\nabla n^{+}\right\|_{L^3}\left\|\nabla^{2} u^{+}\right\|_{L^6}\right) \notag\\
& \lesssim \delta_0\left(\left\|\nabla^{2} n^{+}\right\|_{L^2}^{2}+\left\|\nabla^3 u^{+}\right\|_{L^2}^{2}\right).\notag
\end{align}
Similarly, for the term $I_4$, we have
\begin{equation}\label{3.7}\left|I_{4}\right| \lesssim \delta_0\left(\left\|\nabla^{2} n^{-}\right\|_{L^2}^{2}+\left\|\nabla^3 u^{-}\right\|_{L^2}^{2}\right).\end{equation}
Employing the similar arguments used in \eqref{3.6}, we also have
\begin{align}\label{3.8}
\left|I_{2}\right| &\lesssim \left|\left\langle\nabla\left[g_{+}\left(n^{+}, n^{-}\right) \nabla n^{+}\right], \nabla^{3}
u^{+}\right\rangle\right|\notag \\
&\quad+\left|\left\langle\nabla\left[\bar{g}_{+}\left(n^{+}, n^{-}\right) \nabla n^{-}\right], \nabla^{3} u^{+}\right\rangle\right|\notag \\
&\quad+\left|\left\langle\nabla^{2}\left[\left(u^{+} \cdot \nabla\right) u^{+}\right], \nabla^{2} u^{+}\right\rangle\right| \notag\\
&\quad+\left|\left\langle\nabla\left[h_{+}\left(n^{+}, n^{-}\right)\left(\nabla n^{+} \cdot \nabla\right) u^{+}\right], \nabla^{3} u^{+}\right\rangle\right| \notag\\
&\quad+\left|\left\langle\nabla\left[k_{+}\left(n^{+}, n^{-}\right)\left(\nabla n^{-} \cdot \nabla\right) u^{+}\right], \nabla^{3} u^{+}\right\rangle\right|\notag \\
&\quad+\left|\left\langle\nabla\left[h_{+}\left(n^{+}, n^{-}\right) \nabla n^{+} \cdot \nabla^{t} u^{+}\right], \nabla^{3} u^{+}\right\rangle\right| \notag\\
&\quad+\left|\left\langle\nabla\left[k_{+}\left(n^{+}, n^{-}\right) \nabla n^{-} \cdot \nabla^{t} u^{+}\right], \nabla^{3} u^{+}\right\rangle\right| \notag\\
&\quad+\left|\left\langle\nabla\left[h_{+}\left(n^{+}, n^{-}\right) \nabla n^{+} \operatorname{div} u^{+}\right], \nabla^{3} u^{+}\right\rangle\right| \notag\\
&\quad+\left|\left\langle\nabla\left[k_{+}\left(n^{+}, n^{-}\right) \nabla n^{-} \operatorname{div} u^{+}\right], \nabla^{3} u^{+}\right\rangle\right| \notag\\
&\quad+\left|\left\langle\nabla\left[l_{+}\left(n^{+}, n^{-}\right) \Delta u^{+}\right], \nabla^{3} u^{+}\right\rangle\right| \notag\\
&\quad+\left|\left\langle\nabla\left[l_{+}\left(n^{+}, n^{-}\right) \nabla \operatorname{div} u^{+}\right], \nabla^{3} u^{+}\right\rangle\right|\notag\\
&\lesssim\left\|\nabla^{3} u^{+}\right\|_{L^2}\left(\left\|g_{+}\left(n^{+}, n^{-}\right)\right\|_{L^\infty}\left\|\nabla^{2} n^{+}\right\|_{L^2}+\left\|\nabla g_{+}\left(n^{+}, n^{-}\right)\right\|_{L^{6}}\left\|\nabla n^{+}\right\|_{L^{3}}\right)\\
&\quad+\left\|\nabla^{3} u^{+}\right\|_{L^2}\left(\left\|\bar{g}_{+}\left(n^{+}, n^{-}\right)\right\|_{L^{\infty}}
\left\|\nabla^{2} n^{-}\right\|_{L^2}+\left\|\nabla\bar{g}_{+}\left(n^{+}, n^{-}\right)\right\|_{L^{6}}\left\|\nabla n^{-}\right\|_{L^{3}}\right)\notag\\
&\quad+\left\|\nabla^{2} u^{+}\right\|_{L^{6}}\left(\left\|u^{+}\right\|_{L^{3}}\left\|\nabla^{3} u^{+}\right\|_{L^2}
+\left\|\nabla^{2} u^{+}\right\|_{L^2}\left\|\nabla u^{+}\right\|_{L^{3}}\right)\notag\\
&\quad+\left\|\nabla^{3} u^{+}\right\|_{L^2}\left(\left\|\nabla\left[h_{+}\left(n^{+}, n^{-}\right) \nabla u^{+}\right]\right\|_{L^{6}}\left\|\nabla n^{+}\right\|_{L^{3}}\right.\notag\\
&\quad\left.+\left\|h_{+}\left(n^{+}, n^{-}\right) \nabla u^{+}\right\|_{L^{\infty}}\left\|\nabla^{2} n^{+}\right\|_{L^2}\right)\notag\\
&\quad+\left\|\nabla^{3} u^{+}\right\|_{L^2}\left(\left\|\nabla\left[k_{+}\left(n^{+}, n^{-}\right) \nabla u^{+}\right]\right\|_{L^{6}}\left\|\nabla n^{-}\right\|_{L^{3}}\right.\notag\\
&\quad\left.+\left\|k_{+}\left(n^{+}, n^{-}\right) \nabla u^{+}\right\|_{L^{\infty}}\left\|\nabla^{2} n^{-}\right\|\right)\notag\\
&\quad+\left\|\nabla^{3} u^{+}\right\|_{L^2}\left(\left\|l_{+}\left(n^{+}, n^{-}\right)\right\|_{L^{\infty}}\left\|\nabla^{3} u^{+}\right\|_{L^2}\right.\notag\\
&\quad\left.+\left\|\nabla l_{+}\left(n^{+}, n^{-}\right)\right\|_{L^{6}}\left\|\nabla^{2} u^{+}\right\|_{L^{3}}\right)\notag\\
&\lesssim \delta_0\left(\left\|\nabla^{2}\left(n^{+}, n^{-}\right)\right\|^{2}_{L^2}+\left\|\nabla^2 u^{+}\right\|_{H^1}^{2}\right).\notag
\end{align}
Similarly, for the term $I_5$, we have
\begin{equation}\label{3.9}\left|I_{5}\right| \lesssim \delta_0\left(\left\|\nabla^{2}\left(n^{+}, n^{-}\right)\right\|^{2}+\left\|\nabla^2 u^{-}\right\|_{H^1}^{2}\right).\end{equation}
For the terms $I_3$ and $I_6$, we have from Young's inequality that
\begin{equation}\label{3.10}\left|I_{3}\right| \lesssim \frac{\beta_1\beta_2}{\nu_1^+}\left\|\nabla^{2} n^{-}\right\|_{L^2}^{2}+\frac{\beta_1}{4\beta_2}\nu_1^+\left\|\nabla^3 u^{+}\right\|_{L^2}^{2},\end{equation}
and
\begin{equation}\label{3.11}\left|I_{6}\right| \lesssim \frac{\beta_3\beta_4}{\nu_1^-}\left\|\nabla^{2} n^{+}\right\|_{L^2}^{2}+\frac{\beta_4}{4\beta_3}\nu_1^-\left\|\nabla^3 u^{-}\right\|_{L^2}^{2}.\end{equation}
Combining the estimates \eqref{3.4}--\eqref{3.11} and noting the smallness of $\delta_0$, we conclude that
\begin{align}\begin{split}\label{3.12}
&\displaystyle\frac{1}{2} \frac{\mathrm{d}}{\mathrm{d} t}\left\{\frac{\beta_{1}}{\beta_{2}}\left\|\nabla^{2} n^{+}\right\|^{2}_{L^2}
+\displaystyle\frac{\beta_{1}}{\beta_2}\left\|\nabla^{2} u^{+}\right\|^{2}_{L^2}\right\}+
\frac{\beta_{1}}{2{\beta_2}}\left(\nu_{1}^{+}\left\|\nabla^{3} u^{+}\right\|^{2}_{L^2}
+\nu_{2}^{+}\left\|\nabla^{2} \operatorname{div} u^{+}\right\|^{2}_{L^2}\right) \\
&\quad\quad \leq C \delta_0\left(\left\|\nabla^{2}\left(n^{+}, n^{-}\right)\right\|^{2}+\left\|\nabla^2 u^{+}\right\|_{L^2}^{2}\right)+\frac{\beta_{1} \beta_{2}}{\nu_{1}^{+}}\left\|\nabla^{2} n^{-}\right\|^{2}_{L^2},
\end{split}\end{align}

and
\begin{align}\begin{split}\label{3.13}
&\displaystyle\frac{1}{2} \frac{\mathrm{d}}{\mathrm{d} t}\left\{\frac{\beta_{4}}{\beta_{3}}\left\|\nabla^{2} n^{-}\right\|^{2}_{L^2}
+\displaystyle\frac{\beta_{4}}{\beta_3}\left\|\nabla^{2} u^{-}\right\|^{2}_{L^2}\right\}+
\frac{\beta_{4}}{2{\beta_3}}\left(\nu_{1}^{-}\left\|\nabla^{3} u^{-}\right\|^{2}_{L^2}
+\nu_{2}^{-}\left\|\nabla^{2} \operatorname{div} u^{-}\right\|^{2}_{L^2}\right) \\
&\quad\quad \leq C \delta_0\left(\left\|\nabla^{2}\left(n^{+}, n^{-}\right)\right\|^{2}+\left\|\nabla^2 u^{-}\right\|_{L^2}^{2}\right)+\frac{\beta_{3} \beta_{4}}{\nu_{1}^{-}}\left\|\nabla^{2} n^{+}\right\|^{2}_{L^2},
\end{split}\end{align}
for some positive constant $C$ independent of $\delta_0$.\par
\textbf{{Step 2. Dissipation of $\nabla^2(n^{+,h},n^{-,h})$.}}
Applying the operator $\nabla \mathcal{{F}}^{-1}(1-\phi(\xi))$ to $\eqref{2.11}_2$ and $\eqref{2.11}_4$ and then multiplying the resulting equations by
$\frac{1}{\beta_{2}} \nabla^{2}  n^{+, h}$ and
$\frac{1}{\beta_{3}} \nabla^{2}  n^{-, h}$ respectively, integrating over $\mathbb{R}^{3},$  we get
\begin{align}\begin{split}\label{3.14}\displaystyle &\frac{\mathrm{d}}{\mathrm{d} t}
\left\langle\nabla u^{+, h}, \frac{1}{\beta_{2}} \nabla^{2} n^{+, h}\right\rangle
+\frac{\beta_{1}}{\beta_{2}}\left\|\nabla^{2} n^{+, h}\right\|^{2}_{L^2}+
\left\langle\nabla^{2}  n^{-, h}, \nabla^{2} n^{+, h}\right\rangle \\
&= \frac{1}{\beta_{2}}\left\langle\nabla u^{+, h}, \partial_{t} \nabla^{2} n^{+, h}\right\rangle+\frac{\nu_{1}^{+}}
{\beta_{2}}\left\langle\nabla\Delta u^{+, h}, \nabla^{2} n^{+, h}\right\rangle+\frac{\nu_{2}^{+}}
{\beta_{2}}\left\langle\nabla^{2} \operatorname{div} u^{+, h}, \nabla^{2}  n^{+, h}\right\rangle \\
 &\quad+\frac{1}{\beta_{2}}\left\langle\nabla\mathcal{F}_{2}^h, \nabla^{2} n^{+, h}\right\rangle \\
&=: J_{1}+J_{2}+J_{3}+J_{4},\end{split}\end{align}
and
\begin{align}\begin{split}\label{3.15}\displaystyle &\frac{\mathrm{d}}{\mathrm{d} t}
\left\langle\nabla u^{-, h}, \frac{1}{\beta_{3}} \nabla^{2} n^{-, h}\right\rangle
+\frac{\beta_{4}}{\beta_{3}}\left\|\nabla^{2} n^{-, h}\right\|^{2}_{L^2}+
\left\langle\nabla^{2}  n^{+, h}, \nabla^{2} n^{-, h}\right\rangle \\
&= \frac{1}{\beta_{3}}\left\langle\nabla u^{-, h}, \partial_{t} \nabla^{2} n^{-, h}\right\rangle+\frac{\nu_{1}^{-}}
{\beta_{3}}\left\langle\nabla\Delta u^{-, h}, \nabla^{2} n^{-, h}\right\rangle+\frac{\nu_{2}^{-}}
{\beta_{3}}\left\langle\nabla^{2} \operatorname{div} u^{-, h}, \nabla^{2}  n^{-, h}\right\rangle \\
 &\quad+\frac{1}{\beta_{3}}\left\langle\nabla\mathcal{F}_{4}^h, \nabla^{2} n^{-, h}\right\rangle \\
&=: J_{5}+J_{6}+J_{7}+J_{8}.\end{split}\end{align} Due to
\eqref{3.2}, Lemma \ref{1interpolation}--\ref{lh2}, we can use
integration by parts, and Young's inequality to deduce that
\begin{align}\begin{split}\label{3.16}
\left|J_{1}\right|&=\left|-\frac{1}{\beta_{2}}\left\langle\nabla u^{+, h}, \beta_{1} \nabla^{2}\operatorname{div} u^{+, h}
\right\rangle+\frac{1}{\beta_{2}}\left\langle\nabla u^{+,h}, \nabla^{2}\mathcal{F}_{1}^h\right\rangle\right|\\
&=\left|\frac{\beta_{1}}{\beta_{2}}\left\|\nabla\operatorname{div} u^{+, h}\right\|^{2}_{L^2}-\frac{1}{\beta_{2}}\left\langle\nabla\operatorname{div} u^{+,h}, \nabla\mathcal{F}_{1}^h\right\rangle\right|\\
&\leq \frac{\beta_{1}}{\beta_{2}}\left\|\nabla\operatorname{div} u^{+, h}\right\|^{2}_{L^2}+C\left\|\nabla\operatorname{div}u^{+, h}\right\|_{L^2}
\left(\left\|n^{+}\right\|_{L^{3}}\left\|\nabla^{2} u^{+}\right\|_{L^6}+\left\|u^{+}\right\|_{L^{\infty}}\left\|\nabla^{2}n^{+}\right\|_{L^2}\right)\\
&\leq \frac{\beta_{1}}{\beta_{2}}\left\|\nabla\operatorname{div} u^{+, h}\right\|^{2}_{L^2}+C\left\|\nabla^{3}u^{+}\right\|_{L^2}
\left(\left\|n^{+}\right\|_{H^{1}}\left\|\nabla^{3} u^{+}\right\|_{L^2}+\left\|u^{+}\right\|_{H^{2}}\left\|\nabla^{2}n^{+}\right\|_{L^2}\right)\\
&\leq \frac{\beta_{1}}{\beta_{2}}\left\|\nabla\operatorname{div} u^{+, h}\right\|^{2}_{L^2}+C\delta_0
\left(\left\|\nabla^{2} n^{+}\right\|^{2}_{L^2}+\left\|\nabla^{3}  u^{+}\right\|^{2}_{L^2}\right).
\end{split}\end{align}
Similarly, for the term $J_5$, we have
\begin{equation}\label{3.17}\left|J_{5}\right|\leq  \frac{\beta_{4}}{\beta_{3}}\left\|\nabla\operatorname{div} u^{-, h}\right\|^{2}_{L^2}+C\delta_0
\left(\left\|\nabla^{2} n^{-}\right\|^{2}_{L^2}+\left\|\nabla^{3}  u^{-}\right\|^{2}_{L^2}\right).\end{equation}
By virtue of $\eqref{2.11}_1$, we can rewrite $J_2+J_3$ as
\begin{align}\begin{split}\label{3.18}
J_2+J_3&= \displaystyle\frac{\nu_1^++\nu_2^+}{\beta_2}\left\langle\nabla^2\operatorname{div} u^{+, h}, \nabla^2n^{+, h}\right\rangle\\
&=-\displaystyle\frac{\nu_1^++\nu_2^+}{2\beta_1\beta_2}\frac{\mathrm{d}}{\mathrm{d} t}\left\|\nabla^2 n^{+, h}\right\|^{2}_{L^2}-
\frac{\nu_1^++\nu_2^+}{2\beta_1\beta_2\sqrt{\alpha_1}}\left\langle\nabla^2\operatorname{div}(n^+u^+)^h, \nabla^2n^{+, h}\right\rangle.
\end{split}\end{align}
On the other hand, we have
\begin{align}\begin{split}\label{3.19}
\left\langle\nabla^2\operatorname{div}(n^+u^+)^h, \nabla^2n^{+, h}\right\rangle
&=\left\langle\nabla^2\operatorname{div}(n^+u^+)-\nabla^2\operatorname{div}(n^+u^+)^l, \nabla^2n^{+, h}\right\rangle\\
&=\left\langle\nabla^2\operatorname{div}(n^{+, h}u^+), \nabla^2n^{+, h}\right\rangle+
\left\langle\nabla^2\operatorname{div}(n^{+, l}u^+), \nabla^2n^{+, h}\right\rangle\\
&\quad-\left\langle\nabla^2\operatorname{div}(n^{+}u^+)^l, \nabla^2n^{+, h}\right\rangle\\
&=:J_{2,3}^1+J_{2,3}^2+J_{2,3}^3.
\end{split}\end{align}
For the term $J_{2, 3}^1$, by using integration by parts,
\eqref{3.2}, Lemmas \ref{1interpolation}--\ref{lh2} and Young's
inequality, we have
\begin{align}\label{3.20}
&\left|J_{2, 3}^1\right|\notag\\
&=\left|-\left\langle\operatorname{div}u^+, \left|\nabla^2n^{+, h}\right|^2\right\rangle+\left\langle\nabla^2\left(u^+\cdot \nabla n^{+, h}\right)-u^+\cdot \nabla^3n^{+, h},\nabla^2n^{+, h}\right\rangle+\left\langle\nabla^2\left(n^{+, h}\operatorname{div}u^+\right),\nabla^2n^{+, h}\right\rangle\right|\notag\\
&\lesssim \left\|\operatorname{div}u^+\right\|_{L^\infty}\left\|\nabla^{2}  n^{+, h}\right\|^{2}_{L^2}+
\left\|\nabla^{2}  n^{+, h}\right\|_{L^2}\left(\left\|\nabla u^+\right\|_{L^\infty}\left\|\nabla^{2}  n^{+, h}\right\|_{L^2}+
\left\|\nabla^{2}  u^{+}\right\|_{L^6}\left\|\nabla n^{+, h}\right\|_{L^3}\right)\notag\\
&\quad+\left\|\nabla^{2}  n^{+, h}\right\|_{L^2}\left(\left\|\nabla^3 u^+\right\|_{L^2}\left\|n^{+, h}\right\|_{L^\infty}+
\left\|\operatorname{div}u^+\right\|_{L^\infty}\left\|\nabla^2 n^{+, h}\right\|_{L^2}\right)\notag\\
&\lesssim \left\|\nabla^{2}  u^{+}\right\|_{L^2}^{\frac{1}{2}}\left\|\nabla^{3}  u^{+}\right\|_{L^2}^{\frac{1}{2}}\left\|\nabla^{2}  n^{+}\right\|_{L^2}^
{\frac{1}{2}}\left\|\nabla^{2}  n^{+, h}\right\|_{L^2}^{\frac{3}{2}}
+\left\|\nabla^{3}  u^{+}\right\|_{L^2}\left\|\nabla n^{+}\right\|_{H^1}\left\|\nabla^{2}  n^{+, h}\right\|_{L^2}\notag\\
&\lesssim\delta_0\left(\left\|\nabla^{2}  n^{+, h}\right\|_{L^2}^2+\left\|\nabla^{3}  u^{+}\right\|_{L^2}^2\right).
\end{align}
By employing similar arguments, for the terms $J_{2, 3}^2$ and  $J_{2, 3}^3$, we have
\begin{align}\begin{split}\label{3.21}
\left|J_{2, 3}^2\right|&\lesssim \left\|\nabla^{2}  n^{+, h}\right\|_{L^2}\left(
\left\|\nabla^{3}  n^{+, l}\right\|_{L^2}\left\|u^+\right\|_{L^\infty}+\left\|n^{+, l}\right\|_{L^\infty}
\left\|\nabla^{3}  u^{+}\right\|_{L^2}\right)\\
&\lesssim \left\|\nabla^{2}  n^{+}\right\|_{L^2}\left(
\left\|\nabla^{2}  n^{+}\right\|_{L^2}\left\|u^+\right\|_{H^2}+\left\|n^{+}\right\|_{H^2}
\left\|\nabla^{3}  u^{+}\right\|_{L^2}\right)\\
&\lesssim \delta_0\left( \left\|\nabla^{2}  n^{+}\right\|^{2}_{L^2}+\left\|\nabla^{3}  u^{+}\right\|^{2}_{L^2}\right),
\end{split}\end{align}
\begin{align}\begin{split}\label{3.22}
\left|J_{2, 3}^3\right|&\lesssim \left\|\nabla^{2}  n^{+, h}\right\|_{L^2}\left\|\nabla^{2}\operatorname{div}\left(n^{+}u^+\right)^l\right\|_{L^2}
\\&\lesssim \left\|\nabla^{2}  n^{+}\right\|_{L^2}\left\|\nabla^{2}\left(n^{+}u^+\right)\right\|_{L^2}\\
&\lesssim \left\|\nabla^{2}  n^{+}\right\|_{L^2}\left(\left\|\nabla^{2}  n^{+}\right\|_{L^2}\left\|u^{+}\right\|_{L^\infty}+
\left\|n^{+}\right\|_{L^3}\left\|\nabla^2u^{+}\right\|_{L^6}\right)\\
&\lesssim \left\|\nabla^{2}  n^{+}\right\|_{L^2}\left(\left\|\nabla^{2}  n^{+}\right\|_{L^2}\left\|u^{+}\right\|_{H^2}+
\left\|n^{+}\right\|_{H^1}\left\|\nabla^3u^{+}\right\|_{L^2}\right)\\
&\lesssim \delta_0\left(\left\|\nabla^{2}  n^{+}\right\|^{2}_{L^2}+\left\|\nabla^{3}  u^{+}\right\|^{2}_{L^2}\right).
\end{split}\end{align}
Combining the estimates \eqref{3.18}--\eqref{3.22}, we get
\begin{equation}\label{3.23}|J_2+J_3|\lesssim -\displaystyle\frac{\nu_1^++\nu_2^+}{2\beta_1\beta_2}\frac{\mathrm{d}}{\mathrm{d} t}\left\|\nabla^2 n^{+, h}\right\|^{2}_{L^2}
+\delta_0\left(\left\|\nabla^{2}  n^{+}\right\|^{2}_{L^2}+\left\|\nabla^{3}  u^{+}\right\|^{2}_{L^2}\right).\end{equation}
Similarly for the terms $J_6$ and $J_7$, we have
\begin{equation}\label{3.24}|J_6+J_7|\lesssim -\displaystyle\frac{\nu_1^-+\nu_2^-}{2\beta_3\beta_4}\frac{\mathrm{d}}{\mathrm{d} t}\left\|\nabla^2 n^{-, h}\right\|^{2}_{L^2}
+\delta_0\left(\left\|\nabla^{2}  n^{-}\right\|^{2}_{L^2}+\left\|\nabla^{3}  u^{-}\right\|^{2}_{L^2}\right).\end{equation}
Applying the similar arguments used in \eqref{3.8}, for the terms $J_4$ and $J_8$, we have
\begin{equation}\label{3.25}|J_4|\lesssim \delta_0\left(\left\|\nabla^{2}\left(n^+,n^{-}\right)\right\|^{2}_{L^2}+\left\|\nabla^{2}  u^{+}\right\|^{2}_{H^1}\right),\end{equation}
\begin{equation}\label{3.26}|J_8|\lesssim \delta_0\left(\left\|\nabla^{2}\left(n^+,n^{-}\right)\right\|^{2}_{L^2}+\left\|\nabla^{2}  u^{-}\right\|^{2}_{H^1}\right).\end{equation}
Combining the estimates \eqref{3.14}--\eqref{3.17} and \eqref{3.23}--\eqref{3.26}, we finally conclude that
\begin{align}\begin{split}\label{3.27}\displaystyle \frac{\mathrm{d}}{\mathrm{d} t} &
\displaystyle\left\{\frac{\nu_1^++\nu_2^+}{2\beta_1\beta_2}\left\|\nabla^2 n^{+, h}\right\|^{2}_{L^2}+\left\langle\nabla u^{+, h}, \frac{1}{\beta_{2}} \nabla^{2} n^{+, h}\right\rangle\right\}\\
&\quad+\frac{\beta_{1}}{\beta_{2}}\left\|\nabla^{2} n^{+, h}\right\|^{2}_{L^2}+\left\langle\nabla^2 n^{-, h}, \nabla^{2} n^{+, h}\right\rangle\\
&\leq\displaystyle C\left(\frac{\beta_{1}}{\beta_{2}}\left\|\nabla^2\operatorname{div}u^{+}\right\|^{2}_{L^2}+
\delta_0\left(\left\|\nabla^{2}  \left(n^+,n^{-}\right)\right\|^{2}_{L^2}+\left\|\nabla^{2}  u^{+}\right\|^{2}_{H^1}\right)\right),\end{split}\end{align}
and
\begin{align}\begin{split}\label{3.28}\displaystyle \frac{\mathrm{d}}{\mathrm{d} t} &
\displaystyle\left\{\frac{\nu_1^-+\nu_2^-}{2\beta_3\beta_4}\left\|\nabla^2 n^{-, h}\right\|^{2}_{L^2}
+\left\langle\nabla u^{-, h}, \frac{1}{\beta_{3}} \nabla^{2} n^{-, h}\right\rangle\right\}\\
&\quad+\frac{\beta_{4}}{\beta_{3}}\left\|\nabla^{2} n^{-, h}\right\|^{2}_{L^2}+\left\langle\nabla^2 n^{+, h}, \nabla^{2} n^{-, h}\right\rangle\\
&\leq\displaystyle C\left(\frac{\beta_{4}}{\beta_{3}}\left\|\nabla^2\operatorname{div}u^{-}\right\|^{2}_{L^2}+
\delta_0\left(\left\|\nabla^{2}  \left(n^+,n^{-}\right)\right\|^{2}_{L^2}+\left\|\nabla^{2}  u^{-}\right\|^{2}_{H^1}\right)\right).\end{split}\end{align}

\textbf{{Step 3. Proof of Theorem \ref{3.2mainth}.}} In this step, we are in a position to prove Theorem \ref{3.2mainth}. To begin with,
noticing that
$$
\quad-\frac{s_{-}^{2}(1,1)}{\alpha^{-}(1,1)}<f^{\prime}(1)<\frac{\eta-s_{-}^{2}(1,1)}{\alpha^{-}(1,1)}<0,
$$
where $\eta$ is a positive, small fixed constant, it is easy to see that $\alpha_{2}=\mathcal{C}^{2}(1,1)+\frac{\mathcal{C}^{2}(1,1) \alpha^{-}(1,1) f^{\prime}(1)}{s^{2}(1,1)}<$
$\frac{\mathcal{C}^{2}(1,1)}{s^{2}(1,1)} \eta$ and $\alpha_{4}$ is bounded. Therefore, $\beta_{2}=\frac{\alpha_{2} \sqrt{\alpha_{1}}}{\alpha_{4}}$ is a small
positive constant which will be determined later.\par
Computing $\eqref{3.12}+C_1\times\eqref{3.27}$, we have
\begin{align*}
\displaystyle &\frac{\mathrm{d}}{\mathrm{d} t}\left\{
 \frac{\beta_{1}}{2 \beta_{2}} \left\|\nabla^{2} n^{+}\right\|^{2}_{L^2}
+\frac{\left(\nu_{1}^{+}+\nu_{2}^{+}\right) C_{1}}{2 \beta_{1} \beta_{2}}\left\|\nabla^{2} n^{+, h}\right\|^{2}_{L^2} +\frac{\beta_{1}}{2 \beta_{2}}
\left\|\nabla^{2} u^{+}\right\|^{2}_{L^2}+\frac{C_{1}}{\beta_{2}}\left\langle\nabla u^{+, h}, \nabla^2n^{+, h}\right\rangle\right\}\\
&\quad+\frac{\beta_{1}}{2 \beta_{2}} \left(\nu_{1}^{+}\left\|\nabla^{3}u^{+}\right\|^{2}_{L^2}
+\nu_{2}^{+}\left\|\nabla^{2} \operatorname{div} u^{+}\right\|^{2}_{L^2}\right)+C_{1}\left(\frac{\beta_{1}}{\beta_{2}}
\left\|\nabla^{2} n^{+, h}\right\|^{2}_{L^2}+\left\langle\nabla^2n^{-, h}, \nabla^2n^{+, h}\right\rangle\right) \\
& \leq C \delta_0 \left(\left\|\nabla^{2}\left(n^{+}, n^{-}\right)\right\|^{2}_{L^2}+\left\|\nabla^2 u^{+}\right\|_{H^1}^{2}\right)
+\frac{\beta_{1} \beta_{2}}{\nu_{1}^{+}} \left\|\nabla^{2} n^{-}\right\|^{2}_{L^2}+CC_{1} \frac{\beta_{1}}{\beta_{2}}\left\|\nabla^2 \operatorname{div} u^{+}\right\|^{2}_{L^2}.
\end{align*}
Choosing $C_{1}$ as a fixed positive constant with $C_{1} \leqq \min \left\{\frac{v_{2}^{+}}{4C}, \frac{\nu_{1}^{+}+\nu_{2}^{+}}{2C_0^2}\right\}$ and
making use of the smallness of $\delta_0$, we get
\begin{align}\begin{split}\label{3.29}
\displaystyle&\frac{\mathrm{d}}{\mathrm{d} t} {\mathcal{E}}_{1}(t)+\frac{\beta_{1}}{6 \beta_{2}} \left(\nu_{1}^{+}\left\|\nabla^{3} u^{+}\right\|^{2}
_{L^2}+\nu_{2}^{+}\left\|\nabla^{2} \operatorname{div} u^{+}\right\|^{2}_{L^2}\right) \\
&\quad+\frac{C_{1} \beta_{1}}{\beta_{2}}\left\|\nabla^{2} n^{+, h}\right\|^{2}_{L^2}+C_{1}\left\langle\nabla^2 n^{-, h}, \nabla^2n^{+, h}\right\rangle \\
&\leq\frac{\beta_{1} \beta_{2}}{\nu_{1}^{+}}\left \|\nabla^{2} n^{-}\right\|^{2}_{L^2}+C \delta_0
\left\|\nabla^{2}\left(n^{+}, n^{-}\right)\right\|^{2}_{L^2}+C \delta_0\left\|\nabla^2 u^{+}\right\|^{2}_{L^2},
\end{split}\end{align}
where $\mathcal{E}_{1}(t)$ is given by
\begin{equation}
{\mathcal{E}}_{1}(t)= \frac{\beta_{1}}{2 \beta_{2}} \left\|\nabla^{2} n^{+}\right\|^{2}_{L^2}
+\frac{\left(\nu_{1}^{+}+\nu_{2}^{+}\right) C_{1}}{2 \beta_{1} \beta_{2}}\left\|\nabla^{2} n^{+, h}\right\|^{2}_{L^2} +\frac{\beta_{1}}{2 \beta_{2}}
\left\|\nabla^{2} u^{+}\right\|^{2}_{L^2}+\frac{C_{1}}{\beta_{2}}\left\langle\nabla u^{+, h}, \nabla^2n^{+, h}\right\rangle.
\nonumber\end{equation}
By virtue of $C_{1} \leq \frac{\nu_{1}^{+}+\nu_{2}^{+}}{2C_0^2}$,
 Lemma \ref{lh2} and Young's inequality, there exists a positive constant $C_{2}$ independent of $\delta_0$ and $\beta_{2}$, such that
\begin{equation}
\displaystyle\frac{1}{C_{2} \beta_{2}}\left\|\nabla^{2} (n^{+}, u^+)(t)\right\|^{2}_{L^2}
\leq\mathcal{E}_{1}(t)\leq\frac{C_{2}}{\beta_{2}}\left\|\nabla^{2} (n^{+}, u^+)(t)\right\|^{2}_{L^2}.\nonumber\end{equation}
Similarly, by calculating $\eqref{3.29}+C_{3} \times \eqref{3.13}$ , we have
\begin{align}\begin{split}\label{3.30}
\displaystyle&\frac{\mathrm{d}}{\mathrm{d} t} {\mathcal{E}}_{1}(t)+C_{3} \frac{\mathrm{d}}{\mathrm{d} t}
\left\{\frac{\beta_{4}}{2 \beta_{3}} \left\|\nabla^{2} n^{-}\right\|^{2}_{L^2}
+\frac{\beta_{4}}{2 \beta_{3}}\left\|\nabla^{2} u^{-}\right\|^{2}_{L^2}\right\} \\
&\quad+\frac{\beta_{1}}{6 \beta_{2}} \left(\nu_{1}^{+}\left\|\nabla^{3} u^{+}\right\|^{2}
_{L^2}+\nu_{2}^{+}\left\|\nabla^{2} \operatorname{div} u^{+}\right\|^{2}_{L^2}\right)
+C_{1} \beta_{1}\left(\frac{1}{\beta_{2}}-1\right)\left\|\nabla^{2} n^{+, h}\right\|^{2}_{L^2}\\
&\quad+C_{1}\left\langle\nabla^2 n^{-, h}, \nabla^2n^{+, h}\right\rangle
+C_{3} \frac{\beta_{4}}{4 \beta_{3}} \left(\nu_{1}^{-}\left\|\nabla^{3}  u^{-}\right\|^{2}_{L^2}+\nu_{2}^{-}
\left\|\nabla^{2} \operatorname{div} u^{-}\right\|^{2}_{L^2}\right) \\
&\leq \frac{\beta_{1} \beta_{2}}{\nu_{1}^{+}} \left\|\nabla^{2} n^{-}\right\|^{2}_{L^2}+C \delta_0 \left(
\left\|\nabla^{2}\left(n^{+}, n^{-}\right)\right\|^{2}_{L^2}
+\left\|\nabla^2\left(u^{+}, u^{-}\right)\right\|^{2}_{L^2}\right)+C_{3} \frac{\beta_{3} \beta_{4}}{\nu_{1}^{-}}
\left\|\nabla^{2} n^{+,l}\right\|^{2}_{L^2},
\end{split}\end{align} where $0<C_3\leq \min\left\{1, \frac{C_1\nu_1^-\beta_1}{\beta_3\beta_4}\right\}$.
\par \noindent Next, from $\eqref{3.30}+C_4\times\eqref{3.28}$, we have
\begin{align}\begin{split}\label{3.31}
\displaystyle&\frac{\mathrm{d}}{\mathrm{d} t}\left\{ {\mathcal{E}}_{1}(t)+\frac{C_3\beta_4}{2\beta_3}\left\|\nabla^{2} n^{-}\right\|^{2}_{L^2}
+\frac{(\nu_1^-+\nu_2^-)C_4}{2\beta_3\beta_4}\left\|\nabla^{2} n^{-, h}\right\|^{2}_{L^2}
+\frac{C_3\beta_{4}}{2 \beta_{3}} \left\|\nabla^{2} u^{-}\right\|^{2}_{L^2}
+\frac{C_4}{\beta_{3}}\left\langle\nabla u^{-, h}, \nabla^2n^{-, h}\right\rangle\right\} \\
&\quad+\frac{\beta_{1}}{6 \beta_{2}} \left(\nu_{1}^{+}\left\|\nabla^{3} u^{+}\right\|^{2}
_{L^2}+\nu_{2}^{+}\left\|\nabla^{2} \operatorname{div} u^{+}\right\|^{2}_{L^2}\right)
+C_{1} \beta_{1}\left(\frac{1}{\beta_{2}}-1\right)\left\|\nabla^{2} n^{+, h}\right\|^{2}_{L^2}\\
&\quad+C_{1}\left\langle\nabla^2 n^{-, h}, \nabla^2n^{+, h}\right\rangle
+C_{3} \frac{\beta_{4}}{6 \beta_{3}} \left(\nu_{1}^{-}\left\|\nabla^{3}  u^{-}\right\|^{2}_{L^2}+\nu_{2}^{-}
\left\|\nabla^{2} \operatorname{div} u^{-}\right\|^{2}_{L^2}\right) \\
&\quad+ C_{4} \left(\frac{\beta_{4}}{\beta_{3}}\left\|\nabla^{2}  n^{-, h}\right\|^{2}_{L^2}+\left\langle\nabla^2 n^{+, h}, \nabla^2n^{-, h}\right\rangle\right)\\
&\leq C \delta_0 \left(
\left\|\nabla^{2}\left(n^{+}, n^{-}\right)\right\|^{2}_{L^2}
+\left\|\nabla^2\left(u^{+}, u^{-}\right)\right\|^{2}_{L^2}\right)+\frac{\beta_{1} \beta_{2}}{\nu_{1}^{+}} \left\|\nabla^{2} n^{-}\right\|^{2}_{L^2}+C_{3} \frac{\beta_{3} \beta_{4}}{\nu_{1}^{-}}
\left\|\nabla^{2} n^{+,l}\right\|^{2}_{L^2},
\end{split}\end{align}
where $C_{4} \leq \min \left\{\frac{C_{2} v_{2}^{-}}{24}, \frac{C_3(\nu_1^-+\nu_2^-)}{2C_0^2}\right\}$ and we have used the fact that  $\delta_0$ is sufficiently small.
Taking $\beta_{2} \leq\min \left\{\frac{1}{2}, \frac{C_{4} \nu_{1}^{+} \beta_{4}}{4 \beta_{1} \beta_{3}}\right\}$, we get
\begin{align}\begin{split}\label{3.32}
\displaystyle&\frac{\mathrm{d}}{\mathrm{d} t}{\mathcal{E}}(t)+\frac{\beta_{1}}{6 \beta_{2}} \left(\nu_{1}^{+}\left\|\nabla^{3} u^{+}\right\|^{2}
_{L^2}+\nu_{2}^{+}\left\|\nabla^{2} \operatorname{div} u^{+}\right\|^{2}_{L^2}\right)\\
&\quad+C_{3} \frac{\beta_{4}}{6 \beta_{3}} \left(\nu_{1}^{-}\left\|\nabla^{3}  u^{-}\right\|^{2}_{L^2}+\nu_{2}^{-}
\left\|\nabla^{2} \operatorname{div} u^{-}\right\|^{2}_{L^2}\right)+C_{1} \beta_{1}\left(\frac{1}{\beta_{2}}-1\right)\left\|\nabla^{2} n^{+, h}\right\|^{2}_{L^2}\\
&\quad+C_4\frac{\beta_{4}}{2\beta_{3}}\left\|\nabla^{2}  n^{-, h}\right\|^{2}_{L^2}+(C_1+ C_{4}) \left\langle\nabla^2 n^{+, h}, \nabla^2n^{-, h}\right\rangle\\
&\leq C \delta_0 \left(
\left\|\nabla^{2}\left(n^{+}, n^{-}\right)\right\|^{2}_{L^2}
+\left\|\nabla^2\left(u^{+}, u^{-}\right)\right\|^{2}_{L^2}\right)+\frac{\beta_{1} \beta_{2}}{\nu_{1}^{+}} \left\|\nabla^{2} n^{-, l}\right\|^{2}_{L^2}+C_{3} \frac{\beta_{3} \beta_{4}}{\nu_{1}^{-}}
\left\|\nabla^{2} n^{+,l}\right\|^{2}_{L^2},
\end{split}\end{align}
where
\begin{equation}\mathcal{E}(t)=\mathcal{E}_1(t)+\mathcal{E}_2(t),\nonumber\end{equation}
and
\begin{equation}\mathcal{E}_2(t)=\frac{C_3\beta_4}{2\beta_3}\left\|\nabla^{2} n^{-}\right\|^{2}_{L^2}
+\frac{(\nu_1^-+\nu_2^-)C_4}{2\beta_3\beta_4}\left\|\nabla^{2} n^{-,
h}\right\|^{2}_{L^2} +\frac{C_3\beta_{4}}{2 \beta_{3}}
\left\|\nabla^{2} u^{-}\right\|^{2}_{L^2}
+\frac{C_4}{\beta_{3}}\left\langle\nabla u^{-, h}, \nabla^2n^{-,
h}\right\rangle.\nonumber\end{equation} Due to $C_{4}
\leq\frac{C_3(\nu_1^-+\nu_2^-)}{2C_0^2}$, we have from Lemma
\ref{lh2} and Young's inequality that
\begin{equation}
\displaystyle\frac{1}{C_{5}}\left\|\nabla^{2} (n^{-}, u^-)(t)\right\|^{2}_{L^2}
\leq\mathcal{E}_{2}(t)\leq C_5\left\|\nabla^{2} (n^{-}, u^-)(t)\right\|^{2}_{L^2},\nonumber\end{equation}
for some positive constant $C_5$ independent of $\delta_0$ and $\beta_2$.
Therefore, we have
\begin{align*}
\displaystyle&\frac{1}{C_{6}}\left(\frac{1}{\beta_2}\left\|\nabla^{2} (n^{+}, u^+)(t)\right\|^{2}_{L^2}+\left\|\nabla^{2} (n^{-}, u^-)(t)\right\|^{2}_{L^2}\right)\\
&\quad\quad\leq\mathcal{E}(t)\leq C_6\left(\frac{1}{\beta_2}\left\|\nabla^{2} (n^{+}, u^+)(t)\right\|^{2}_{L^2}+\left\|\nabla^{2} (n^{-}, u^-)(t)\right\|^{2}_{L^2}\right),\end{align*}
for some positive constant $C_6$ independent of $\delta_0$ and $\beta_2$.
Choosing $\beta_{2}$ sufficiently small, such that
$$
\frac{C_{1} \beta_{1}}{2}\left(\frac{1}{\beta_{2}}-1\right) \cdot \frac{C_{4} \beta_{4}}{2 \beta_{3}} \geq 4\left(C_{1}+C_{4}\right)^{2},
$$
that is,
\begin{equation}\label{3.33}
\beta_{2} \leq \min \left\{\frac{1}{2}, \frac{C_{4} v_{1}^{+} \beta_{4}}{4 \beta_{1} \beta_{3}},
 \frac{1}{1+\frac{16 \beta_{3}\left(C_{1}+C_{4}\right)^{2}}{C_{1} C_{4} \beta_{1} \beta_{4}}}\right\},
\end{equation}
we have
\begin{align}\begin{split}\label{3.34}
\displaystyle&\frac{C_{1} \beta_{1}\left(\frac{1}{\beta_{2}}-1\right)}{2}\left\|\nabla^{2} n^{+, h}\right\|^{2}_{L^2}
+C_{4} \frac{\beta_{4}}{2 \beta_{3}}\left\|\nabla^{2} n^{-, h}\right\|^{2}_{L^2}
+\left(C_{1}+C_{4}\right)\left\langle\nabla \nabla n^{-, h}, \nabla \nabla n^{+, h}\right\rangle \\
&\geq \frac{C_{4} \beta_{1}\left(\frac{1}{\beta_{2}}-1\right)}{4}\left\|\nabla^{2} n^{+, h}\right\|^{2}_{L^2}+C_{3} \frac{\beta_{4}}{4 \beta_{3}}
\left\|\nabla^{2} n^{-, h}\right\|^{2}_{L^2} \\
&\geq \frac{C_{1} \beta_{1}}{8} \frac{1}{\beta_{2}}\left\|\nabla^{2} n^{+, h}\right\|^{2}+C_{4} \frac{\beta_{4}}{4 \beta_{3}}\left\|\nabla^{2} n^{-, h}\right\|^{2}_{L^2}.
\end{split}\end{align}
As a result, combining \eqref{3.32} with \eqref{3.34} and using
Lemma \ref{lh2} and the smallness of $\delta_0$, we have
\begin{align}\begin{split}\label{3.35}
\displaystyle\frac{\mathrm{d}}{\mathrm{d} t}{\mathcal{E}}(t)+C_7&\left\{\frac{1}{\beta_{2}} \left(\left\|\nabla^{3} u^{+}\right\|^{2}
_{L^2}+\left\|\nabla^{2} n^{+, h}\right\|^{2}_{L^2}\right)\right.\\
&\quad\quad+\left.\left(\left\|\nabla^{3} u^{-}\right\|^{2}
_{L^2}+\left\|\nabla^{2} n^{-, h}\right\|^{2}_{L^2}\right)\right\}\lesssim \left\|\nabla^{2}\left( n^{+, l}, u^{+, l},  n^{-, l}, u^{-, l}\right)\right\|^{2}_{L^2},
\end{split}\end{align}
for some positive constant $C_7$ independent of $\delta_0$ and $\beta_2$.
Plugging $C_7\left(\frac{1}{\beta_{2}}\left\|\nabla^{2} n^{+, h}\right\|^{2}_{L^2}+\left\|\nabla^{2} n^{-, h}\right\|^{2}_{L^2}\right)$ into two sides of \eqref{3.35}, we have
\begin{align}\begin{split}\label{3.36}
\displaystyle\frac{\mathrm{d}}{\mathrm{d} t}{\mathcal{E}}(t)+C_7&\left\{\frac{1}{\beta_{2}} \left(\left\|\nabla^{3} u^{+}\right\|^{2}
_{L^2}+\left\|\nabla^{2} n^{+}\right\|^{2}_{L^2}\right)\right.\\
&\quad\quad+\left.\left(\left\|\nabla^{3} u^{-}\right\|^{2}
_{L^2}+\left\|\nabla^{2} n^{-}\right\|^{2}_{L^2}\right)\right\}\lesssim \left\|\nabla^{2}\left( n^{+, l}, u^{+, l},  n^{-, l}, u^{-, l}\right)\right\|^{2}_{L^2}.
\end{split}\end{align}
Using Lemma \ref{lh2} again, we have
\begin{equation}
\left\|\nabla^{2}\left( u^{+}, u^{-}\right)\right\|^{2}_{L^2}\leq C_0^2\left\|\nabla^{3}\left( u^{+}, u^{-}\right)\right\|^{2}_{L^2}+\left\|\nabla^{2}\left( u^{+,l}, u^{-, l}\right)\right\|^{2}_{L^2},
\nonumber\end{equation}
which together with \eqref{3.36} implies
\begin{equation}\label{3.37}
\displaystyle\frac{\mathrm{d}}{\mathrm{d} t}{\mathcal{E}}(t)+C_8{\mathcal{E}}(t)\lesssim \left\|\nabla^{2}\left( n^{+, l}, u^{+, l},  n^{-, l}, u^{-, l}\right)\right\|^{2}_{L^2},
\end{equation}
for some positive constant $C_8$ independent of $\delta_0$ and $\beta_2$.

Next, we employ Proposition \eqref{Prop2.6} to deduce the optimal decay rate of $\left\|\nabla^{2}\left( n^{+, l}, u^{+, l},  n^{-, l}, u^{-, l}\right)\right\|_{L^2}$.
To begin with, by defining $U=(n^+, u^+, n^-, u^-)^t$ and $\mathcal
F=(\mathcal{F}^1,\mathcal{F}^2,\mathcal{F}^3,\mathcal{F}^4)^t$, we have from Duhamel's principle that
\begin{equation}\label{3.38} U=\text{e}^{t\mathcal{A}}U(0)+\int_0^t\text{e}^{(t-\tau)\mathcal{A}}\mathcal F(\tau)\mathrm{d}\tau.
\end{equation}
By virtue of Proposition \ref{Prop2.6}, \eqref{3.38}, Plancherel theorem and H\"older's inequality, we have
\begin{equation}\label{3.39}\begin{split}&\left\|\nabla^2(n^{+, l}, u^{+, l},  n^{-, l}, u^{-, l})(t)\right\|_{L^{2}}\\
=&~\left\||\xi|^2(\widehat{n^{+, l}}, \widehat{u^{+, l}},  \widehat{n^{-, l}}, \widehat{u^{-, l}})(t)\right\|_{L^{2}}\\
 \lesssim &~(1+t)^{-\frac{7}{4}}\left\|
(n^+, u^+, n^-, u^-)(0)\right\|_{L^1}+\displaystyle\int_0^t(1+t-\tau)^{-\frac{7}{4}}\left\|
\mathcal{F}(\tau)\right\|_{L^1}\mathrm{d}\tau.
\end{split}\end{equation}
Next, we shall estimate the second term on the right--hand side of
\eqref{3.39}. To do this, by virtue of the definition of
$\mathcal{F}$ and \eqref{1.14}--\eqref{1.15}, we can bound the term
$\big{\|}{\mathcal F}(t)\big{\|}_{L^1}$ by
\begin{equation}\label{3.40}\begin{split}&\left\|\mathcal {F}(t)\right\|_{L^1}\\
 &\lesssim\Big{\|}\left(\text{div}(n^\pm u^\pm),n^\pm \nabla n^{\pm},n^\pm \nabla n^{\mp},
u^\pm \nabla u^{\pm}, \nabla n^\pm\cdot\nabla u^{\pm}, \nabla n^\pm\cdot\nabla u^{\mp}, n^+\nabla^{2}u^{\pm}, n^-\nabla^{2}u^{\pm}\right)(t)\Big{\|}_{L^1}\\
&\lesssim\left\|(n^+, u^+, n^-, u^-)(t)\right\|_{L^2}\left\|\nabla(n^+, u^+, n^-, u^-)(t)\right\|_{L^2}
+\left\|(n^+, u^+, n^-, u^-)(t)\right\|_{L^2}^2\\
&\quad+\left\|(n^+,n^-)(t)\right\|_{L^2}\left\|\nabla^2(u^+,u^-)(t)\right\|_{L^2}
\\ &\lesssim C(N_0)(1+t)^{-2}.
\end{split}\end{equation}
Plugging \eqref{3.40} into \eqref{3.39} gives that
\begin{equation}\label{3.41}\left\|\nabla^2(n^{+, l}, u^{+, l},  n^{-, l}, u^{-, l})(t)\right\|_{L^{2}}\lesssim
C(N_0)(1+t)^{-\frac{7}{4}}.\end{equation}
\par Finally, substituting \eqref{3.41} into \eqref{3.37} and using  Gronwall's inequality, we conclude that
\begin{equation}\mathcal{E}(t)\lesssim (1+t)^{-\frac{7}{2}},\nonumber\end{equation}
which implies \eqref{3.3}.
\par
 Therefore, the proof of Theorem \ref{3.2mainth} is completed.
\end{proof}
In what follows, we will devote ourselves to dealing with {\bf{Case
II:} $\bf{N>2}$}. To begin with, for any $0\le \ell\le N$, we define
the time--weighted energy functional as
\begin{equation}\label{3.42} \mathcal{E}_{\ell}^{N}(t)=\sup\limits_{0\leq\tau\leq t}\left
\{(1+\tau)^{\frac{3}{4}+\frac{\ell}{2}}\|\nabla^{\ell}(n^+, u^+, n^-, u^-)(\tau)\|_{H^{N-\ell}}\right
\}.
\end{equation}
Therefore, it suffices to prove that for any $0\le \ell\le N$,
$\mathcal{E}_{\ell}^{N}(t)$ has an uniform time--independent bound
for $t\geq t_0$ with a sufficiently large positive constant $t_0$.
In what follows, we always assume $t\geq t_0$. We will take
advantage of the low--frequency and high--frequency decomposition
and use the key linear convergence estimates obtained in Section 2
to achieve this goal by induction.

\bigskip

\begin{Theorem}\label{3.3mainth}({\bf{Case II:} $\bf{N>2}$}) Assume that the hypotheses of Theorem \ref{3.1mainth}
and \eqref{1.21} are in force. Then there exists a positive constant
$C$, which is independent of $t$, such that
\begin{equation}\nonumber \mathcal{E}_\ell^{N}(t)\le C(N_0),
\end{equation}
for $0\le \ell\le N$.
\end{Theorem}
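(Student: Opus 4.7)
The plan is to establish the bound $\mathcal{E}_\ell^N(t)\le C(N_0)$ by induction on $\ell$ running from $\ell=0$ up to $\ell=N$. Theorem \ref{3.2mainth}, whose proof in fact works verbatim for any $N\ge 2$, already supplies the initial range $0\le \ell\le 2$, providing in particular the quantitative decay $\|\nabla^2(n^+,u^+,n^-,u^-)(t)\|_{L^2}\lesssim (1+t)^{-7/4}$. Assuming inductively that $\mathcal{E}_k^N(t)\le C(N_0)$ for every $k\le \ell-1$ with some $\ell\ge 3$, the goal is an evolution inequality of Lyapunov type for a functional $\mathcal{E}^{(\ell)}(t)$ equivalent to $\|\nabla^\ell(n^+,u^+,n^-,u^-)\|_{H^{N-\ell}}^2$, of the shape
\[
\frac{d}{dt}\mathcal{E}^{(\ell)}(t)+C\mathcal{E}^{(\ell)}(t)\lesssim \|\nabla^\ell(n^{+,l},u^{+,l},n^{-,l},u^{-,l})(t)\|_{L^2}^2,
\]
whose right-hand side is then controlled through the linear decay of Proposition \ref{Prop2.6}.

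The construction of $\mathcal{E}^{(\ell)}$ follows the three-step template of Theorem \ref{3.2mainth}, now executed simultaneously over every order $j$ with $\ell\le j\le N$. For each such $j$, applying $\nabla^j$ to \eqref{2.11} and testing against the weighted quadruple $(\tfrac{\beta_1}{\beta_2}\nabla^j n^+,\tfrac{\beta_1}{\beta_2}\nabla^j u^+,\tfrac{\beta_4}{\beta_3}\nabla^j n^-,\tfrac{\beta_4}{\beta_3}\nabla^j u^-)$ produces a velocity dissipation $\|\nabla^{j+1}u^\pm\|_{L^2}^2$, as in \eqref{3.12}--\eqref{3.13}. To recover a density dissipation I apply $\nabla^{j-1}\mathfrak{F}^{-1}(1-\phi(\xi))$ to the momentum equations and test against $\tfrac{1}{\beta_2}\nabla^j n^{+,h}$ and $\tfrac{1}{\beta_3}\nabla^j n^{-,h}$, producing high-frequency interactive functionals analogous to \eqref{3.27}--\eqref{3.28} with $\|\nabla^j n^{\pm,h}\|_{L^2}^2$ on the left. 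Taking a weighted sum with the same small-coefficient scheme \eqref{3.33}--\eqref{3.34}, the cross term $\langle\nabla^j n^{+,h},\nabla^j n^{-,h}\rangle$ is absorbed thanks to the sign condition $\beta_1\beta_4-\beta_2\beta_3>0$ of \eqref{2.13}, and summation over $j$ delivers $\mathcal{E}^{(\ell)}$ together with the full dissipation $\|\nabla^{\ell+1}(u^+,u^-)\|_{H^{N-\ell}}^2+\|\nabla^\ell(n^{+,h},n^{-,h})\|_{H^{N-\ell}}^2$ on the left-hand side.

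The main obstacle is closing the nonlinear estimates at the top derivative level $j=N$: the worst contributions $u^\pm\cdot\nabla^{N+1}n^\pm$, $n^\pm\nabla^{N+1}u^\pm$, and the quasilinear piece $l_\pm(n^+,n^-)\Delta u^\pm$ differentiated $N$ times produce factors of $\nabla^{N+1}u^\pm$ or $\nabla^{N+1}n^\pm$ that the dissipation cannot absorb in the low-frequency block, while the plain $H^2$-smallness argument of Case I is no longer available in unweighted form. Following the sketch in the introduction, I replace smallness of $\delta_0$ by time-decay smallness: Theorem \ref{3.2mainth} applied with the current $N\ge 3$ gives $\|\nabla^2 u^\pm(t)\|_{L^2}\lesssim (1+t)^{-7/4}$, whence by Gagliardo--Nirenberg
\[
\|\nabla u^\pm(t)\|_{L^\infty}\lesssim \|\nabla^2 u^\pm\|_{L^2}^{1/2}\|\nabla^3 u^\pm\|_{L^2}^{1/2}\lesssim (1+t)^{-7/8},
\]
and analogous interpolations, fed by the inductive hypothesis $\mathcal{E}_k^N$ with $k\le \ell-1$, bound the relevant $L^p$/$L^\infty$-norms of $(n^\pm,u^\pm)$ and their lower-order derivatives by negative powers of $1+t$. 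Choosing $t_0$ sufficiently large, every problematic coefficient becomes small enough for $t\ge t_0$ to be absorbed by the dissipative side.

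Finally, to bound $\|\nabla^\ell(n^{+,l},u^{+,l},n^{-,l},u^{-,l})(t)\|_{L^2}$, I use Duhamel's formula $U(t)=e^{t\mathcal{B}}U(0)+\int_0^t e^{(t-\tau)\mathcal{B}}\mathcal{F}(\tau)d\tau$, apply $\nabla^\ell$ and the low-frequency cut-off, and invoke Proposition \ref{Prop2.6} to obtain
\[
\|\nabla^\ell(n^{+,l},u^{+,l},n^{-,l},u^{-,l})(t)\|_{L^2}\lesssim (1+t)^{-3/4-\ell/2}N_0+\int_0^t (1+t-\tau)^{-3/4-\ell/2}\|\mathcal{F}(\tau)\|_{L^1}d\tau.
\]
Bounding $\|\mathcal{F}(\tau)\|_{L^1}$ by Cauchy--Schwarz as products of two $L^2$-factors drawn from the already-controlled $\mathcal{E}_k^N$ with $k\le \ell-1$ yields $\|\mathcal{F}(\tau)\|_{L^1}\lesssim C(N_0)(1+\tau)^{-5/4-\ell/2}$, and the usual convolution lemma preserves the rate $(1+t)^{-3/4-\ell/2}$. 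Substituting into the Lyapunov inequality and applying Gronwall gives $\mathcal{E}^{(\ell)}(t)\lesssim (1+t)^{-3/2-\ell}$, hence $\mathcal{E}_\ell^N(t)\le C(N_0)$, closing the induction.
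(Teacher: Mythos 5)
Your overall architecture --- induction on $\ell$, a Lyapunov-type inequality built from the weighted energy of \eqref{3.61}--\eqref{3.62} together with the high-frequency interactive functionals \eqref{3.72}--\eqref{3.73}, the $(1+t)^{-7/8}$ smallness of $\|\nabla u^\pm\|_{L^\infty}$ replacing the $\delta_0$-smallness of Case I, and the use of Proposition~\ref{Prop2.6} to control the low-frequency forcing --- matches the paper's Lemma~\ref{Lemma3.2} closely. However, there are two genuine gaps.

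First, your base case is not established. You claim that Theorem~\ref{3.2mainth} ``already supplies the initial range $0\le\ell\le 2$,'' but Theorem~\ref{3.2mainth}, even granting that its proof extends verbatim to any $N\ge 2$, only yields $\|\nabla^2(n^+,u^+,n^-,u^-)(t)\|_{L^2}\lesssim(1+t)^{-7/4}$. This does \emph{not} control $\mathcal{E}_2^N(t)$ when $N>2$, since $\mathcal{E}_2^N$ requires a bound on $\|\nabla^2(\,\cdot\,)\|_{H^{N-2}}$, i.e.\ on all derivatives of orders $2$ through $N$. Nor does it give $\mathcal{E}_0^N$ or $\mathcal{E}_1^N$. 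The paper's induction therefore starts at $\ell=0$ with Lemma~\ref{Lemma3.1}, which produces a temporal energy functional equivalent to $\|(n^+,u^+,n^-,u^-)\|_{H^N}^2$ (not just $\|\nabla^2\cdot\|_{L^2}^2$) and closes a fixed-point argument to get $\mathcal{E}_0^N(t)\le CN_0$. You need an analogue of this step; the full-$H^N$-norm base case cannot be borrowed from the $N=2$ theorem.

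Second, the Duhamel step in the last paragraph contains an error. You write that Cauchy--Schwarz on $\mathcal{F}$ yields $\|\mathcal{F}(\tau)\|_{L^1}\lesssim C(N_0)(1+\tau)^{-5/4-\ell/2}$, but $\mathcal{F}$ is a fixed quadratic nonlinearity independent of $\ell$; its $L^1$-norm decays at a fixed rate (at best $(1+\tau)^{-2}$ from the optimal decay of the lowest factors and their first derivatives), not at a rate improving with $\ell$. Consequently, for $\ell\ge 3$ the convolution $\int_0^t(1+t-\tau)^{-3/4-\ell/2}(1+\tau)^{-2}\,d\tau$ saturates at $(1+t)^{-2}$ rather than preserving $(1+t)^{-3/4-\ell/2}$, and your Lyapunov--Gronwall step does not close. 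The paper circumvents this by splitting the time integral at $t/2$ as in \eqref{3.50}: on $[0,t/2]$ it uses $(1+t-\tau)\sim(1+t)$ together with an integrable-in-time bound on $\|\mathcal{F}\|_{L^1}$, and on $[t/2,t]$ it trades all but one factor of $|\xi|$ from the semigroup onto $\mathcal{F}^l$, estimating $\||\xi|^{j-1}\widehat{\mathcal{F}}^l(\tau)\|_{L^\infty}\lesssim C(N_0)(1+\tau)^{-1-j/2}$ via the inductive hypothesis, so that the near-time portion contributes $(1+t)^{-1-j/2}\lesssim(1+t)^{-3/4-j/2}$. Without the time-splitting and the $\||\xi|^{j-1}\widehat{\mathcal{F}}^l\|_{L^\infty}$ trick, the claimed rate for $\|\nabla^\ell(n^{+,l},u^{+,l},n^{-,l},u^{-,l})\|_{L^2}$ is not obtained.
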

\begin{proof} We will employ finite mathematical induction to prove
Theorem \ref{3.3mainth}. Therefore, it suffices to prove the
following Lemma \ref{Lemma3.1} and Lemma \ref{Lemma3.2}. Thus, the
proof Theorem \ref{3.3mainth} is completed.
\end{proof}

\smallskip

The first lemma is concerned with the estimate on $\mathcal{E}_0^{N}(t)$.

\begin{Lemma}\label{Lemma3.1} Assume that the  hypotheses of Theorem \ref{3.1mainth} and \eqref{1.21} are in force. Then there
 exists a positive constant $C$, which is independent of $t$, such that
\begin{equation}\label{3.43}\mathcal{E}_0^{N}(t)\le CN_0.
\end{equation}
\end{Lemma}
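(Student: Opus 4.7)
My plan is to combine a carefully assembled $H^N$ energy functional with a Duhamel bootstrap that feeds the $L^1$--linear decay already established in Proposition \ref{Prop2.6} back through the nonlinear source. The goal is to show $\|(n^+,u^+,n^-,u^-)(t)\|_{H^N}\lesssim(1+t)^{-3/4}$, after which the definition of $\mathcal{E}_0^N$ gives the lemma. Note that the $H^2$--decay $\|(n^+,u^+,n^-,u^-)(t)\|_{H^2}\lesssim(1+t)^{-3/4}$ and $\|\nabla(n^+,u^+,n^-,u^-)(t)\|_{H^1}\lesssim(1+t)^{-5/4}$ are already at my disposal (Theorem \ref{0mainth} and the $N=2$ part, Theorem \ref{3.2mainth}); only the higher-order part is new.

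First, I would build an $H^N$ energy functional $\mathcal{L}^N(t)$ equivalent to $\|(n^+,u^+,n^-,u^-)\|_{H^N}^2$. Summing the standard identities obtained from testing $\nabla^j\eqref{2.11}_1$--$\nabla^j\eqref{2.11}_4$ by $\tfrac{\beta_1}{\beta_2}\nabla^j n^+$, $\tfrac{\beta_1}{\beta_2}\nabla^j u^+$, $\tfrac{\beta_4}{\beta_3}\nabla^j n^-$, $\tfrac{\beta_4}{\beta_3}\nabla^j u^-$ for $0\le j\le N$ produces dissipation only in $\nabla^{j+1}u^{\pm}$ (cf. \eqref{1.27}). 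To recover dissipation of $\nabla^{j+1}(n^+,n^-)$ for $0\le j\le N-2$ I add the classical interaction functionals $\langle\nabla^j u^{\pm},\tfrac{1}{\beta_{2,3}}\nabla^{j+1}n^{\pm}\rangle$ as in \eqref{1.28}--\eqref{1.29}. At the top level $j=N-1$ these produce uncontrollable $\nabla^{N+1}u^{\pm}$ terms, so I replace them by the high-frequency interactive functionals $\langle\nabla^{N-1}u^{\pm,h},\tfrac{1}{\beta_{2,3}}\nabla^N n^{\pm,h}\rangle$ exactly as in \eqref{3.27}--\eqref{3.28}; the gain in the high-frequency time derivative then cancels the bad boundary term. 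Combining these estimates and using Lemma \ref{lh2} in the form $\|\nabla^j v\|_{L^2}^2\le C_0^2\|\nabla^{j+1}v\|_{L^2}^2+\|\nabla^j v^l\|_{L^2}^2$ I expect to obtain, after absorbing the $\delta_0$--small nonlinear contributions,
\begin{equation}\nonumber
\frac{d}{dt}\mathcal{L}^N(t)+C\,\mathcal{L}^N(t)\lesssim\|(n^{+,l},u^{+,l},n^{-,l},u^{-,l})(t)\|_{L^2}^2.
\end{equation}

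Second, I would bound the low-frequency forcing on the right via Duhamel and Proposition \ref{Prop2.6}. With $U=(n^+,u^+,n^-,u^-)^t$ and $\mathcal{F}=(\mathcal{F}_1,\mathcal{F}_2,\mathcal{F}_3,\mathcal{F}_4)^t$,
\begin{equation}\nonumber
\|U^l(t)\|_{L^2}\lesssim(1+t)^{-\tfrac{3}{4}}N_0+\int_0^t(1+t-\tau)^{-\tfrac{3}{4}}\|\mathcal{F}(\tau)\|_{L^1}\,d\tau.
\end{equation}
The nonlinear terms in \eqref{2.2}--\eqref{2.10} are at worst bilinear in $(n^{\pm},u^{\pm},\nabla n^{\pm},\nabla u^{\pm},\nabla^2u^{\pm})$, so H\"older's inequality and Theorem \ref{0mainth} give
\begin{equation}\nonumber
\|\mathcal{F}(\tau)\|_{L^1}\lesssim\|(n^{\pm},u^{\pm})(\tau)\|_{L^2}\,\|\nabla(n^{\pm},u^{\pm})(\tau)\|_{H^1}\lesssim C(N_0)(1+\tau)^{-2},
\end{equation}
whence $\|U^l(t)\|_{L^2}^2\lesssim C(N_0)(1+t)^{-3/2}$. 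Plugging this into the energy inequality and applying Gronwall (integrating factor $e^{Ct}$) yields $\mathcal{L}^N(t)\lesssim C(N_0)(1+t)^{-3/2}$ and hence the claimed $H^N$--decay.

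The main obstacle I anticipate is closing the top-order nonlinear estimates without assuming smallness of the $H^N$--norm (only $H^2$ is small). Routine commutator estimates on $\nabla^N$ applied to quasilinear terms like $l_{\pm}(n^+,n^-)\Delta u^{\pm}$ or $(u^{\pm}\cdot\nabla)u^{\pm}$ will create factors that contain $\|\nabla u^{\pm}\|_{L^\infty}$ times the large quantity $\|\nabla^N(n^{\pm},u^{\pm})\|_{L^2}^2$. I will absorb these by exploiting the observation made in the introduction,
\begin{equation}\nonumber
\|\nabla u^{\pm}(t)\|_{L^\infty}\lesssim\|\nabla^2u^{\pm}\|_{L^2}^{1/2}\|\nabla^3u^{\pm}\|_{L^2}^{1/2}\lesssim(1+t)^{-7/8},
\end{equation}
which is available from the $N=2$ case and Theorem \ref{3.2mainth}. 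This time-decaying prefactor lets me close the estimate for $t\ge t_0$ with $t_0$ chosen large, and for $t\le t_0$ the bound is automatic from Theorem \ref{3.1mainth}. Combining the two regimes delivers $\mathcal{E}_0^N(t)\le CN_0$.
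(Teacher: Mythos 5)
Your proposal is correct and runs along essentially the same lines as the paper's proof: construct an $H^N$--level temporal energy functional satisfying $\tfrac{d}{dt}\mathcal{L}^N + c\mathcal{L}^N \lesssim \|(n^{+,l},u^{+,l},n^{-,l},u^{-,l})\|_{L^2}^2$, bound the low-frequency part via Duhamel and Proposition \ref{Prop2.6}, then close with Gronwall. Two small remarks on where your route deviates. First, in bounding $\|\mathcal{F}(\tau)\|_{L^1}$, you use the already-established $H^2$--decay (Theorems \ref{0mainth} and \ref{3.2mainth}) to get $\|\mathcal{F}(\tau)\|_{L^1}\lesssim C(N_0)(1+\tau)^{-2}$ outright; the paper instead writes the self-referential bound \eqref{3.45} $\|U^l(t)\|_{L^2}\lesssim (1+t)^{-3/4}\bigl(N_0+\delta_0\mathcal{E}_0^N(t)\bigr)$ and closes by absorbing the $\delta_0^2(\mathcal{E}_0^N)^2$ term (see \eqref{3.46}--\eqref{3.47}). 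Both close; yours avoids the bootstrap, while the paper's formulation tracks the $N_0$-dependence more explicitly as $\mathcal{E}_0^N\le CN_0$. Second, your explicit use of $\|\nabla u^{\pm}\|_{L^\infty}\lesssim\|\nabla^2u^{\pm}\|_{L^2}^{1/2}\|\nabla^3u^{\pm}\|_{L^2}^{1/2}\lesssim(1+t)^{-7/8}$ together with the split $t\lessgtr t_0$ is exactly right and is in fact necessary to absorb the top-order quasilinear commutator terms when only $H^2$--smallness is assumed; the paper's two-line proof of Lemma \ref{Lemma3.1} defers this to the Evje--Wang--Wen a priori estimates, but it is precisely the mechanism made explicit in the proof of Lemma \ref{Lemma3.2} (cf.\ \eqref{3.55} and \eqref{3.67}). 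So you have correctly identified and filled the content the paper leaves implicit.
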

\begin{proof}  Using the similar argument of Evje--Wang--Wen \cite{Evje9} for the a priori estimates on
$(n^+, u^+, n^-, u^-)$, it is straightforward to deduce that there
exists a temporal energy functional $\mathcal E_0^{N}(t)$,
which is equivalent to $\|(n^+, u^+, n^-, u^-)(t)\|^2_{H^{N}}$
and satisfies
\begin{equation}\nonumber\frac{\mathrm{d}}{\mathrm{d}t}
\mathcal{E}_0^{N}(t)+\|\nabla(n^+, n^-)(t)\|^2_{H^{N-1}}+\|\nabla(u^+,u^-)(t)\|_{H^N}^2\le 0,
\end{equation}
which implies that there exists a positive constant $D_1$ such that
 \begin{equation}\label{3.44}\frac{\mathrm{d}}{\mathrm{d}t}\mathcal{E}_0^{N}(t)+\frac{1}{D_1}\mathcal{E}_0^{N}(t)\le
 C\|(n^{+, l}, u^{+, l}, n^{-, l}, u^{-, l})(t)\|^2_{L^2},
\end{equation}
where we have used Lemma \ref{lh2}.\par \noindent Similar to the
proof of \eqref{3.41}, we have
\begin{equation}\label{3.45}\left\|(n^{+, l}, u^{+, l},  n^{-, l}, u^{-, l})(t)\right\|_{L^{2}}\lesssim
C(1+t)^{-\frac{3}{4}}\left(N_0+\delta_0\mathcal{E}_0^{N}(t)\right).\end{equation}
Substituting \eqref{3.45} into \eqref{3.44} yields that
 \begin{equation}\nonumber\frac{\mathrm{d}}{\mathrm{d}t}\mathcal E_0^{N}(t)+\frac{1}{D_1}\mathcal E_0^{N}(t)
 \le C(1+t)^{-\frac{3}{2}}\left(N_0+\delta_0\mathcal{E}_0^{N}(t)\right)^2,
\end{equation}
which together with Gronwall's argument gives
\begin{equation}\begin{split}\nonumber\mathcal E_0^{N}(t)\leq &~\text{e}^{-\frac{1}{D_1}t}\mathcal E_0^{N}(0)
+C\int_0^t\text{e}^{-\frac{1}{D_1}(t-\tau)}(1+t-\tau)^{-\frac{3}{2}}\left(N_0+\delta_0\mathcal{E}_0^{N}(t)\right)^2\mathrm{d}\tau\\
\leq& ~C(1+t)^{-\frac{3}{2}}\left(N_0^2+\delta_0^2
{\mathcal{E}_0^{N}(t)}\right)^2,
 \end{split}\end{equation}
  which implies that
 \begin{equation}\label{3.46}(1+t)^{\frac{3}{2}}\|(n^+, u^+, n^-, u^-)(t)\|^2_{H^N}\leq C\left(N_0^2+\delta_0^2 (\mathcal{E}_0^{N}(t))^2\right).\end{equation}
Since $\mathcal{E}_0^{N}(t)$ is non--decreasing, it follows from
\eqref{3.46} that
$$(\mathcal{E}_0^{N}(t))^2\leq CN_0^2+\delta_0^2 (\mathcal{E}_0^{N}(t))^2,$$
which implies
\begin{equation}\mathcal{E}_0^N(t)\leq CN_0,\label{3.47}\end{equation}
if $\delta_0$ is sufficiently small. Therefore, the
proof of Lemma \ref{Lemma3.1} has been completed.
\end{proof}

 The next lemma is devoted to closing the estimates $\mathcal{E}_\ell^{N}(t)$, $1\le \ell\le N$.

\begin{Lemma}\label{Lemma3.2} Assume that the  hypotheses of Theorem \ref{3.1mainth} and \eqref{1.21}.
If additionally  \begin{equation}\label{3.48}\mathcal{E}_{\ell-1}^{N}(t)\le
C(N_0),\end{equation} then it holds that
\begin{equation}\label{3.49}\mathcal{E}_\ell^{N}(t)\le C(N_0),
\end{equation}
for $1\le \ell\le N$ and $t\geq t_0$.
\end{Lemma}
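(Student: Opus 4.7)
The plan is to run a derivative--level energy argument at each order $k$ with $\ell\le k\le N$, combine it with a high--frequency interactive functional of the Bresch--Desjardins type as in Step 2 of Theorem \ref{3.2mainth}, and close the loop by feeding the low--frequency part through the linear decay of Proposition \ref{Prop2.6} together with the inductive hypothesis $\mathcal{E}_{\ell-1}^{N}(t)\le C(N_0)$. Throughout we shall restrict to $t\ge t_0$ for $t_0$ sufficiently large so that the time--weighted bounds from the inductive hypothesis provide genuine smallness.

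First, I would apply $\nabla^k$ to $\eqref{2.11}_1$--$\eqref{2.11}_4$, test against $\tfrac{\beta_1}{\beta_2}\nabla^k n^+$, $\tfrac{\beta_1}{\beta_2}\nabla^k u^+$, $\tfrac{\beta_4}{\beta_3}\nabla^k n^-$ and $\tfrac{\beta_4}{\beta_3}\nabla^k u^-$, respectively, and sum over $\ell\le k\le N$. The linear terms give the dissipation of $\nabla^{k+1}(u^+,u^-)$ plus skew--symmetric cross terms that are absorbed using Young's inequality together with the dissipation of $n^\pm$ supplied by Step 2 below. For the nonlinear source terms $\mathcal F_i$ I would use the commutator/product estimates of Lemmas \ref{1interpolation}--\ref{es-product} as in \eqref{3.6}--\eqref{3.9}; the key point is that each nonlinear contribution can be controlled by $\|\nabla^{k}(n^+,n^-,u^+,u^-)\|_{L^2}^2$ and $\|\nabla^{k+1}u^\pm\|_{L^2}^2$ multiplied by $\|\nabla u^\pm\|_{L^\infty}$ or by $\|(n^+,n^-)\|_{L^\infty}+\|u^\pm\|_{L^\infty}$. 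Here is where I expect the main obstacle: when $k=N$ the factor $\nabla^{N+1}u^\pm$ appears but lies outside the solution space. To circumvent this I would follow the introduction and replace the classical interactive functional of Evje--Wang--Wen by the high--frequency analogue \eqref{1.30}--\eqref{1.31} at level $\nabla^N$, so that the troublesome terms carry a low--frequency cutoff that is then estimated via $\|\nabla^k u^{\pm,l}\|_{L^2}\lesssim \|\nabla^{k-1}u^{\pm,l}\|_{L^2}\lesssim$ (previously controlled).

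Second, at every order $\ell\le k\le N-1$ I would apply $\nabla \mathfrak{F}^{-1}(1-\phi(\xi))$ to the momentum equations, pair with $\tfrac{1}{\beta_2}\nabla^k n^{+,h}$ and $\tfrac{1}{\beta_3}\nabla^k n^{-,h}$, and reproduce \eqref{3.27}--\eqref{3.28} verbatim at level $k$. Combining this with the energy estimate from the previous step, and choosing the small parameters $C_1,C_3,C_4$ and the tunable constant $\beta_2$ exactly as in \eqref{3.33}--\eqref{3.34}, I would build a functional $\mathcal{E}^{(\ell)}(t)$ equivalent to $\|\nabla^\ell(n^+,u^+,n^-,u^-)\|_{H^{N-\ell}}^2$ and satisfying
\begin{equation}\label{PPropLemma32}
\frac{d}{dt}\mathcal{E}^{(\ell)}(t)+C\,\mathcal{E}^{(\ell)}(t)\lesssim \big\|\nabla^\ell(n^{+,l},u^{+,l},n^{-,l},u^{-,l})(t)\big\|_{L^2}^{2}+(\text{nonlinear remainder}).
\end{equation}
The nonlinear remainder collects the terms in which the full $L^\infty$--norm of $\nabla u^\pm$ shows up. For these I would use the Gagliardo--Nirenberg inequality $\|\nabla u^\pm\|_{L^\infty}\lesssim\|\nabla^2 u^\pm\|_{L^2}^{1/2}\|\nabla^3 u^\pm\|_{L^2}^{1/2}$ followed by \eqref{3.3} (which is already at our disposal since $N\ge 3$ is handled one step at a time starting from $\ell=1$), yielding $\|\nabla u^\pm(t)\|_{L^\infty}\lesssim(1+t)^{-7/8}$; this gives a time--integrable smallness factor that absorbs the remainder into the dissipation for $t\ge t_0$.

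Third, I would estimate the forcing on the right of \eqref{PPropLemma32}. By Duhamel and Proposition \ref{Prop2.6},
\begin{equation}\nonumber
\big\|\nabla^\ell(n^{+,l},u^{+,l},n^{-,l},u^{-,l})(t)\big\|_{L^2}\lesssim (1+t)^{-\frac{3}{4}-\frac{\ell}{2}}N_0+\int_0^t(1+t-\tau)^{-\frac{3}{4}-\frac{\ell}{2}}\|\mathcal F(\tau)\|_{L^1}\,d\tau,
\end{equation}
and each term in $\mathcal F$ is quadratic in $(n^\pm,u^\pm,\nabla n^\pm,\nabla u^\pm,\nabla^2 u^\pm)$. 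Using the Cauchy--Schwarz inequality in $L^2\times L^2$ and the inductive hypothesis $\mathcal{E}_{\ell-1}^{N}(t)\le C(N_0)$ together with $\mathcal{E}_{0}^{N}(t)\le CN_0$ from Lemma \ref{Lemma3.1}, one obtains $\|\mathcal F(\tau)\|_{L^1}\lesssim C(N_0)(1+\tau)^{-\frac{3}{2}-\frac{\ell-1}{2}}$, hence
\begin{equation}\nonumber
\big\|\nabla^\ell(n^{+,l},u^{+,l},n^{-,l},u^{-,l})(t)\big\|_{L^2}^{2}\lesssim C(N_0)(1+t)^{-\frac{3}{2}-\ell}.
\end{equation}

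Finally, inserting this bound into \eqref{PPropLemma32} and applying Gronwall gives
\begin{equation}\nonumber
\mathcal{E}^{(\ell)}(t)\lesssim e^{-Ct}\mathcal{E}^{(\ell)}(0)+C(N_0)(1+t)^{-\frac{3}{2}-\ell},
\end{equation}
which, by the equivalence of $\mathcal{E}^{(\ell)}(t)$ with $\|\nabla^\ell(n^+,u^+,n^-,u^-)\|_{H^{N-\ell}}^2$, yields $(1+t)^{\frac{3}{4}+\frac{\ell}{2}}\|\nabla^\ell(n^+,u^+,n^-,u^-)\|_{H^{N-\ell}}\le C(N_0)$ and hence $\mathcal{E}_\ell^N(t)\le C(N_0)$. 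The delicate point I foresee is the calibration of the small parameters $C_1,C_3,C_4,\beta_2$ at each order $k$ together with the treatment of the highest--order nonlinearity, where the high--frequency interactive functional and the decay bound on $\|\nabla u^\pm\|_{L^\infty}$ must cooperate to absorb all terms containing $\nabla^{N+1}u^\pm$.
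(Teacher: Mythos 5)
There is a genuine gap in your Step 3, and it is precisely the point where the paper introduces its key device. You claim that $\|\mathcal F(\tau)\|_{L^1}\lesssim C(N_0)(1+\tau)^{-\frac{3}{2}-\frac{\ell-1}{2}}$ using the inductive hypothesis, and then plug this into the Duhamel integral $\int_0^t(1+t-\tau)^{-\frac{3}{4}-\frac{\ell}{2}}\|\mathcal F(\tau)\|_{L^1}\,d\tau$. But $\mathcal F$ contains genuinely quadratic, low--derivative terms such as $u^+\cdot\nabla u^+$, and the Cauchy--Schwarz bound $\|u^+\cdot\nabla u^+\|_{L^1}\lesssim\|u^+\|_{L^2}\|\nabla u^+\|_{L^2}\lesssim(1+\tau)^{-3/4}(1+\tau)^{-5/4}=(1+\tau)^{-2}$ cannot be improved no matter how much higher--order decay the inductive hypothesis supplies. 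Thus $\|\mathcal F(\tau)\|_{L^1}$ decays no faster than $(1+\tau)^{-2}$, and your claimed rate $(1+\tau)^{-1-\ell/2}$ is false for all $\ell\geq 3$. With the true rate $(1+\tau)^{-2}$, the convolution $\int_0^t(1+t-\tau)^{-\frac{3}{4}-\frac{\ell}{2}}(1+\tau)^{-2}\,d\tau$ saturates at $(1+t)^{-2}$ once $\frac{3}{4}+\frac{\ell}{2}>2$, i.e.\ for $\ell\geq 3$, so the Gronwall step does not produce the required $(1+t)^{-\frac{3}{2}-\ell}$ decay and the induction does not close.

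What you are missing is the time--splitting in the Duhamel formula that the paper employs in \eqref{3.50}. On $[0,t/2]$ one keeps the $L^1\!\to L^2$ kernel bound with full decay $(1+t-\tau)^{-\frac{3}{4}-\frac{j}{2}}$ and uses the crude rate $\|\mathcal F(\tau)\|_{L^1}\lesssim(1+\tau)^{-2}$, since there $(1+t-\tau)^{-\frac{3}{4}-\frac{j}{2}}\sim(1+t)^{-\frac{3}{4}-\frac{j}{2}}$ and $(1+\tau)^{-2}$ is integrable. On $[t/2,t]$ one sacrifices decay in the kernel, using only $(1+t-\tau)^{-\frac{5}{4}}$ from a $\dot H^{-1}$--type bound, and instead transfers $j-1$ derivatives onto the nonlinearity, estimating $\||\xi|^{j-1}\widehat{\mathcal F}^{l}(\tau)\|_{L^\infty}\lesssim\|\nabla^{j-1}\mathcal F(\tau)\|_{L^1}$. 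Because $\nabla^{j-1}\mathcal F$ is a sum of bilinear products in which at least one factor carries $j-1$ derivatives, this quantity \emph{does} decay at the inductive rate $(1+\tau)^{-1-j/2}$ (see \eqref{3.51}), and the $[t/2,t]$ integral then closes at $(1+t)^{-\frac{3}{4}-\frac{j}{2}}$. Without this two--interval decomposition, the low--frequency forcing estimate cannot be propagated to orders $\ell\geq 3$, which is exactly the regime that distinguishes Lemma \ref{Lemma3.2} from the $N=2$ case already handled by Theorem \ref{3.2mainth}. The rest of your outline (the $\nabla^j$--level energy identity, the high--frequency interactive functional, the $\|\nabla u^\pm\|_{L^\infty}\lesssim(1+t)^{-7/8}$ smallness from \eqref{3.3}, and the Gronwall closure) matches the paper's Steps 2--4, so once Step 3 is repaired with the time--splitting the argument goes through.
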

\begin{proof}
We will combine the key linear estimates with delicate nonlinear
energy analysis based on good properties of the low--frequency and
high--frequency decomposition to prove Lemma \ref{Lemma3.2}, and the
process involves the following four steps.

\bigskip

\textbf{{Step 1. $L^2$--estimate of $(\nabla^j n^{+, l},\nabla^j
u^{+, l},\nabla^jn^{-, l},\nabla^j u^{-, l}$)} with $\ell\leq j\leq
N$.} First, similar to the proof of \eqref{3.41}, we also have
\begin{equation}\label{3.50}\begin{split}&\left\|\nabla^j(n^{+, l}, u^{+, l},  n^{-, l}, u^{-, l})(t)\right\|_{L^{2}}\\
=&~\left\||\xi|^j(\widehat{n^{+, l}}, \widehat{u^{+, l}},  \widehat{n^{-, l}}, \widehat{u^{-, l}})(t)\right\|_{L^{2}}\\
 \lesssim &~(1+t)^{-\frac{3}{4}-\frac{j}{2}}\left\|
(n^+, u^+, n^-, u^-)(0)\right\|_{L^1}+\displaystyle\int_0^{\frac{t}{2}}(1+t-\tau)^{-\frac{3}{4}-\frac{j}{2}}\left\|
\mathcal{F}(\tau)\right\|_{L^1}\mathrm{d}\tau\\
&+\displaystyle\int_{\frac{t}{2}}^t(1+t-\tau)^{-\frac{5}{4}}\big{\|}|\xi|^{j-1}\widehat{\mathcal
F}^l(\tau)\big{\|}_{L^\infty}\mathrm{d}\tau\\
\lesssim&~ C(N_0)(1+t)^{-\frac{3}{4}-\frac{j}{2}}+\int_\frac{t}{2}^t(1+t-\tau)^{-\frac{5}{4}}\big{\|}|\xi|^{j-1}\widehat{\mathcal
F}^l(\tau)\big{\|}_{L^\infty}\mathrm{d}\tau.
\end{split}\end{equation}
Next, we shall estimate the second term on the right--hand of
\eqref{3.50}. The main idea of our approach is to make full use of
the benefit of the low--frequency and high--frequency decomposition.
To see this, by virtue of the assumption \eqref{3.48}, Lemma
\ref{lh2}, we can bound the term
$\big{\|}|\xi|^{j-1}\widehat{\mathcal F}^l(t)\big{\|}_{L^\infty}$ by
\begin{equation}\label{3.51}\begin{split}&\left\||\xi|^{j-1}\widehat{\mathcal F}^l(t)\right\|_{L^\infty}\\
 &\lesssim\Big{\|}\nabla^{j-1}\left(\text{div}(n^\pm u^\pm),n^\pm \nabla n^{\pm},n^\pm \nabla n^{\mp},
u^\pm \nabla u^{\pm}, \nabla n^\pm\cdot\nabla u^{\pm}, \nabla n^\pm\cdot\nabla u^{\mp}\right)(t)\Big{\|}_{L^1}
\\ &\quad+\Big{\|}\nabla^{\max\{0, j-2\}}\left(n^+\nabla^{2}u^{\pm}\right)(t)\Big{\|}_{L^1}+\Big{\|}\nabla^{\max\{0, j-2\}}\left(n^-\nabla^{2}u^{\pm}\right)(t)\Big{\|}_{L^1}\\
&\lesssim\left\|(n^+, u^+, n^-, u^-)(t)\right\|_{L^2}\left\|\nabla^{j}(n^+, u^+, n^-, u^-)(t)\right\|_{L^2}\\
&\quad+\left\|\nabla(n^+, u^+, n^-, u^-)(t)\right\|_{L^2}\left\|\nabla^{j-1}(n^+, u^+, n^-, u^-)(t)\right\|_{L^2}\\
&\quad+\left\|\nabla(n^+, u^+, n^-, u^-)(t)\right\|_{L^2}\left\|\nabla^{j}(n^+, u^+, n^-, u^-)(t)\right\|_{L^2}\\
&\quad+\left\|(n^+,n^-)(t)\right\|_{L^2}\left\|\nabla^j(u^+,u^-)(t)\right\|_{L^2}+\left\|\nabla^{\max\{0, j-2\}}(n^+,n^-)(t)\right\|_{L^2}\left\|\nabla^2(u^+,u^-)(t)\right\|_{L^2}
\\ &\lesssim
(1+t)^{-\frac{3}{4}}\mathcal{E}_0^{N}(t)(1+t)^{-\frac{3}{4}-\frac{j-1}{2}}\mathcal{E}_{j-1}^{N}(t)
\\ &\lesssim C(N_0)(1+t)^{-1-\frac{j}{2}}.
\end{split}\end{equation}

Substituting \eqref{3.51} into \eqref{3.50} yields that
\begin{equation}\|\nabla^j(n^{+, l}, u^{+, l},  n^{-, l}, u^{-, l})(t)\|_{L^{2}}\le
C(N_0)(1+t)^{-\frac{3}{4}-\frac{j}{2}}.\label{3.52}\end{equation}\par
It should be mentioned that the low--frequency convergence estimate
\eqref{3.52} plays a critical role in proving the optimal
convergence rates of the highest--order spatial derivatives of
solutions.

\bigskip

\textbf{{Step 2. $L^2$--estimate of $(\nabla^j n^{+},\nabla^j
u^{+},\nabla^jn^{-},\nabla^j u^{-}$)} with $\ell\leq j\leq N$.}
 Multiplying $\nabla^{j}\eqref{2.11}_{1}$, $\nabla^{j}\eqref{2.11}_{2}$, $\nabla^{j}\eqref{2.11}_{3}$ and $\nabla^{j}\eqref{2.11}_{4}$ by $\frac{\beta_1}{\beta_{2}} \nabla^{j} n^{+}$,
$\frac{\beta_1}{\beta_2} \nabla^{j} u^{+}, \frac{\beta_4}{\beta_3} \nabla^{j} n^{-}$ and $\frac{\beta_4}{\beta_3} \nabla^{j} u^{-}$ respectively, and then integrating over $\mathbb{R}^{3},$ we have
\begin{align}\begin{split}\label{3.53}
\displaystyle\frac{1}{2} \frac{\mathrm{d}}{\mathrm{d} t} &\left\{\frac{\beta_1}{\beta_2}\left\|\nabla^{j} n^{+}\right\|^{2}
+\frac{\beta_1}{\beta_2}\left\|\nabla^{j} u^{+}\right\|^{2}\right\}+
\frac{\beta_1}{\beta_2}\left(\nu_{1}^{+}\left\|\nabla^{j+1} u^{+}\right\|^{2}+\nu_{2}^{+}\left\|\nabla^{j} \operatorname{div} u^{+}\right\|^{2}\right) \\
&=\left\langle\nabla^{j} \mathcal{{F}}_{1}, \frac{\beta_1}{\beta_2} \nabla^{j} n^{+}\right\rangle+
\left\langle\nabla^{j} \mathcal{F}_{2}, \frac{\beta_1}{\beta_2} \nabla^{j} u^{+}\right\rangle-
\left\langle\nabla^{j} \nabla n^{-}, \beta_{1} \nabla^{j} u^{+}\right\rangle \\
&=:K_{1}^j+K_{2}^j+K_{3}^j,
\end{split}\end{align}
and
\begin{align}\label{3.54}
\frac{1}{2} \frac{\mathrm{d}}{\mathrm{d} t}&\left\{\frac{\beta_4}{\beta_3}\left\|\nabla^{j} n^{-}\right\|^{2}+\frac{\beta_4}{\beta_3}\left\|\nabla^{j} u^{-}\right\|^{2}\right\}+\frac{\beta_4}{\beta_3}\left(\nu_{1}^{-}\left\|\nabla^{j+1}u^{-}\right\|^{2}+\nu_{2}^{-}\left\|\nabla^{j} \operatorname{div} u^{-}\right\|^{2}\right)\notag\\
&=\left\langle\nabla^{j} \mathcal{F}_{3}, \frac{\beta_{4}}{\beta_{3}} \nabla^{j} n^{-}\right\rangle+
\left\langle\nabla^{j} \mathcal{F}_{4}, \frac{\beta_{4}}{\beta_{3}} \nabla^{j} u^{-}\right\rangle-\left\langle\nabla^{j} \nabla n^{+}, \beta_{4}
\nabla^{j} u^{-}\right\rangle\\
&=:K_{4}^j+K_{5}^j+K_{6}^j.\notag\end{align} \noindent From Lemmas
\ref{1interpolation}--\ref{es-product}, integration by parts and
Young's inequality, we have
\begin{align}\begin{split}\label{3.55}
\left|K_{1}^j\right| &\lesssim \left|\left\langle\nabla^{j} n^{+}, \nabla^{j}\left(n^{+} \operatorname{div} u^{+}\right)\right\rangle\right|
+\left|\left\langle\nabla^{j} n^{+}, \nabla^{j}\left(\nabla n^{+} \cdot u^{+}\right)\right\rangle\right| \\
& \lesssim\left\|\nabla^{j} n^{+}\right\|_{L^2}\left(\left\|\nabla^{j} n^{+}\right\|_{L^2}\left\|\nabla u^{+}\right\|_{L^{\infty}}
+\left\|n^{+}\right\|_{L^{\infty}}\left\|\nabla^{j+1} u^{+}\right\|_{L^2}\right) \\
&\quad+\left\|\nabla^{j} n^{+}\right\|^{2}_{L^2}\left\|\nabla u^{+}\right\|_{L^{\infty}}+
\left\|\nabla^{j} n^{+}\right\|_{L^2}\left\|\nabla^{j}\left(\nabla n^{+} \cdot u^{+}\right)-\nabla^{j+1}  n^{+} \cdot u^{+}\right\|_{L^2} \\
& \lesssim\left\|\nabla^{j} n^{+}\right\|_{L^2}\left(\left\|\nabla^{j} n^{+}\right\|_{L^2}\left\|\nabla^2 u^{+}\right\|_{L^2}^{\frac{1}{2}}\left\|\nabla^{3} u^{+}\right\|_{L^2}^{\frac{1}{2}}
+\left\|n^{+}\right\|_{H^2}\left\|\nabla^{j+1} u^{+}\right\|_{L^2}\right)\\
&\quad+\left\|\nabla^{j} n^{+}\right\|_{L^2}\left(\left\|\nabla^{2} n^{+}\right\|_{L^2}\left\|\nabla^2 u^{+}\right\|_{L^2}^{\frac{1}{2}}\left\|\nabla^3 u^{+}\right\|_{L^2}^{\frac{1}{2}}
+\left\|\nabla n^{+}\right\|_{L^3}\left\|\nabla^{j} u^{+}\right\|_{L^6}\right) \\
& \lesssim (1+t_0)^{-\frac{7}{8}}\left(\left\|\nabla^{j}
n^{+}\right\|_{L^2}^{2}+\left\|\nabla^{j+1}
u^{+}\right\|_{L^2}^{2}\right),
\end{split}\end{align}
where we have used \eqref{3.2} for $N=3$ and \eqref{3.3}. Similarly,
for the term $K_4^j$, we have
\begin{equation}\label{3.56}\left|K_{4}^j\right| \lesssim (1+t_0)^{-\frac{7}{8}}
\left(\left\|\nabla^{j} n^{-}\right\|_{L^2}^{2}+\left\|\nabla^{j+1}
u^{-}\right\|_{L^2}^{2}\right).\end{equation} Employing the similar
arguments used in \eqref{3.56}, we also have
\begin{align}\label{3.57}
\left|K_{2}^j\right| &\lesssim \left|\left\langle\nabla^{j-1}\left[g_{+}\left(n^{+}, n^{-}\right) \nabla n^{+}\right], \nabla^{j+1}
u^{+}\right\rangle\right| \notag\\
&\quad+\left|\left\langle\nabla^{j-1}\left[\bar{g}_{+}\left(n^{+}, n^{-}\right) \nabla n^{-}\right], \nabla^{j+1} u^{+}\right\rangle\right| \notag\\
&\quad+\left|\left\langle\nabla^{j}\left[\left(u^{+} \cdot \nabla\right) u^{+}\right], \nabla^{j} u^{+}\right\rangle\right| \notag\\
&\quad+\left|\left\langle\nabla^{j-1}\left[h_{+}\left(n^{+}, n^{-}\right)\left(\nabla n^{+} \cdot \nabla\right) u^{+}\right], \nabla^{j+1} u^{+}\right\rangle\right| \notag\\
&\quad+\left|\left\langle\nabla^{j-1}\left[k_{+}\left(n^{+}, n^{-}\right)\left(\nabla n^{-} \cdot \nabla\right) u^{+}\right], \nabla^{j+1} u^{+}\right\rangle\right| \notag\\
&\quad+\left|\left\langle\nabla^{j-1}\left[h_{+}\left(n^{+}, n^{-}\right) \nabla n^{+} \cdot \nabla^{t} u^{+}\right], \nabla^{j+1} u^{+}\right\rangle\right| \notag\\
&\quad+\left|\left\langle\nabla^{j-1}\left[k_{+}\left(n^{+}, n^{-}\right) \nabla n^{-} \cdot \nabla^{t} u^{+}\right], \nabla^{j+1} u^{+}\right\rangle\right| \notag\\
&\quad+\left|\left\langle\nabla^{j-1}\left[h_{+}\left(n^{+}, n^{-}\right) \nabla n^{+} \operatorname{div} u^{+}\right], \nabla^{j+1} u^{+}\right\rangle\right| \notag\notag\\
&\quad+\left|\left\langle\nabla^{j-1}\left[k_{+}\left(n^{+}, n^{-}\right) \nabla n^{-} \operatorname{div} u^{+}\right], \nabla^{j+1} u^{+}\right\rangle\right| \notag\\
&\quad+\left|\left\langle\nabla^{j-1}\left[l_{+}\left(n^{+}, n^{-}\right) \Delta u^{+}\right], \nabla^{3} u^{+}\right\rangle\right| \notag\\
&\quad+\left|\left\langle\nabla^{j-1}\left[l_{+}\left(n^{+}, n^{-}\right) \nabla \operatorname{div} u^{+}\right], \nabla^{j+1} u^{+}\right\rangle\right|\notag\\
&\lesssim\left\|\nabla^{j+1} u^{+}\right\|_{L^2}\left(\left\|g_{+}\left(n^{+}, n^{-}\right)\right\|_{L^\infty}\left\|\nabla^{j} n^{+}\right\|_{L^2}+\left\|\nabla^{j-1} g_{+}\left(n^{+}, n^{-}\right)\right\|_{L^{6}}\left\|\nabla n^{+}\right\|_{L^{3}}\right)\\
&\quad+\left\|\nabla^{j+1} u^{+}\right\|_{L^2}\left(\left\|\bar{g}_{+}\left(n^{+}, n^{-}\right)\right\|_{L^{\infty}}
\left\|\nabla^{j} n^{-}\right\|_{L^2}+\left\|\nabla^{j-1}\bar{g}_{+}\left(n^{+}, n^{-}\right)\right\|_{L^{6}}\left\|\nabla n^{-}\right\|_{L^{3}}\right)\notag\\
&\quad+\left\|\nabla^{j} u^{+}\right\|_{L^{6}}\left(\left\|u^{+}\right\|_{L^{3}}\left\|\nabla^{j+1} u^{+}\right\|_{L^2}
+\left\|\nabla^{j} u^{+}\right\|_{L^2}\left\|\nabla u^{+}\right\|_{L^{3}}\right)\notag\\
&\quad+\left\|\nabla^{j+1} u^{+}\right\|_{L^2}\left(\left\|\nabla^{j-1}\left[h_{+}\left(n^{+}, n^{-}\right) \nabla u^{+}\right]\right\|_{L^{6}}\left\|\nabla n^{+}\right\|_{L^{3}}\right.\notag\\
&\quad\left.+\left\|h_{+}\left(n^{+}, n^{-}\right) \nabla u^{+}\right\|_{L^{\infty}}\left\|\nabla^{j} n^{+}\right\|_{L^2}\right)\notag\\
&\quad+\left\|\nabla^{j+1} u^{+}\right\|_{L^2}\left(\left\|\nabla^{j-1}\left[k_{+}\left(n^{+}, n^{-}\right) \nabla u^{+}\right]\right\|_{L^{6}}\left\|\nabla n^{-}\right\|_{L^{3}}\right.\notag\\
&\quad\left.+\left\|k_{+}\left(n^{+}, n^{-}\right) \nabla u^{+}\right\|_{L^{\infty}}\left\|\nabla^{j} n^{-}\right\|\right)\notag\\
&\quad+\left\|\nabla^{j+1} u^{+}\right\|_{L^2}\left(\left\|l_{+}\left(n^{+}, n^{-}\right)\right\|_{L^{\infty}}\left\|\nabla^{j+1} u^{+}\right\|_{L^2}\right.\notag\\
&\quad\left.+\left\|\nabla^{j-1} l_{+}\left(n^{+}, n^{-}\right)\right\|_{L^{6}}\left\|\nabla^{2} u^{+}\right\|_{L^{3}}\right)\notag\\
&\lesssim (1+t_0)^{-\frac{7}{8}}\left(\left\|\nabla^{j}\left(n^{+},
n^{-}\right)\right\|^{2}_{L^2}+\left\|\nabla^j
u^{+}\right\|_{H^1}^{2}\right).\notag
\end{align}
Similarly, for the term $K_5^j$, we have
\begin{equation}\label{3.58}\left|K_{5}^j\right| \lesssim (1+t_0)^{-\frac{7}{8}}
\left(\left\|\nabla^{j}\left(n^{+},
n^{-}\right)\right\|^{2}+\left\|\nabla^j
u^{-}\right\|_{H^1}^{2}\right).\end{equation} For the terms $K_3^j$
and $K_6^j$, we have from Young's inequality that
\begin{equation}\label{3.59}\left|K_{3}^j\right| \lesssim
\frac{\beta_1\beta_2}{\nu_1^+}\left\|\nabla^{j} n^{-}\right\|_{L^2}^{2}+\frac{\beta_1}{4\beta_2}\nu_1^+\left\|\nabla^{j+1} u^{+}\right\|_{L^2}^{2},\end{equation}
and
\begin{equation}\label{3.60}\left|K_{6}^j\right| \lesssim \frac{\beta_3\beta_4}{\nu_1^-}\left\|\nabla^{j} n^{+}\right\|_{L^2}^{2}+\frac{\beta_4}{4\beta_3}\nu_1^-\left\|\nabla^{j+1} u^{-}\right\|_{L^2}^{2}.\end{equation}
Combining the relations \eqref{3.53}--\eqref{3.60} and using the
fact that $t_0$ is sufficiently large, we finally conclude that
\begin{align}\begin{split}\label{3.61}
&\displaystyle\frac{1}{2} \frac{\mathrm{d}}{\mathrm{d} t}\left\{\frac{\beta_{1}}{\beta_{2}}\left\|\nabla^{j} n^{+}\right\|^{2}_{L^2}
+\displaystyle\frac{\beta_{1}}{\beta_2}\left\|\nabla^{j} u^{+}\right\|^{2}_{L^2}\right\}+
\frac{\beta_{1}}{2{\beta_2}}\left(\nu_{1}^{+}\left\|\nabla^{j+1} u^{+}\right\|^{2}_{L^2}
+\nu_{2}^{+}\left\|\nabla^{j} \operatorname{div} u^{+}\right\|^{2}_{L^2}\right) \\
&\quad\quad \leq C
(1+t_0)^{-\frac{7}{8}}\left(\left\|\nabla^{j}\left(n^{+},
n^{-}\right)\right\|^{2}+\left\|\nabla^j
u^{+}\right\|_{L^2}^{2}\right)+\frac{\beta_{1}
\beta_{2}}{\nu_{1}^{+}}\left\|\nabla^{j} n^{-}\right\|^{2}_{L^2},
\end{split}\end{align}

and
\begin{align}\begin{split}\label{3.62}
&\displaystyle\frac{1}{2} \frac{\mathrm{d}}{\mathrm{d} t}\left\{\frac{\beta_{4}}{\beta_{3}}\left\|\nabla^{j} n^{-}\right\|^{2}_{L^2}
+\displaystyle\frac{\beta_{4}}{\beta_3}\left\|\nabla^{j} u^{-}\right\|^{2}_{L^2}\right\}+
\frac{\beta_{4}}{2{\beta_3}}\left(\nu_{1}^{-}\left\|\nabla^{j+1} u^{-}\right\|^{2}_{L^2}
+\nu_{2}^{-}\left\|\nabla^{j} \operatorname{div} u^{-}\right\|^{2}_{L^2}\right) \\
&\quad\quad \leq C
(1+t_0)^{-\frac{7}{8}}\left(\left\|\nabla^{j}\left(n^{+},
n^{-}\right)\right\|^{2}+\left\|\nabla^j
u^{-}\right\|_{L^2}^{2}\right)+\frac{\beta_{3}
\beta_{4}}{\nu_{1}^{-}}\left\|\nabla^{j} n^{+}\right\|^{2}_{L^2},
\end{split}\end{align}
for some positive constant $C$ independent of $t_0$.\par
 \textbf{{Step 3. Dissipation of $\nabla^j(n^{+, h}, n^{+, h})$ with $\ell\leq j\leq N$.}}
 Applying the operator $\nabla^{j-1} \mathcal{{F}}^{-1}(1-\phi(\xi))$ to $\eqref{2.11}_2$ and then multiplying the resultant equation by
by $\frac{1}{\beta_{2}} \nabla^{j}  n^{+, h}$, integrating over $\mathbb{R}^{3},$  we have
\begin{align}\begin{split}\label{3.63}\displaystyle &\frac{\mathrm{d}}{\mathrm{d} t}
\left\langle\nabla^{j-1} u^{+, h}, \frac{1}{\beta_{2}} \nabla^{j} n^{+, h}\right\rangle
+\frac{\beta_{1}}{\beta_{2}}\left\|\nabla^{j} n^{+, h}\right\|^{2}_{L^2}+
\left\langle\nabla^{j}  n^{-, h}, \nabla^{j} n^{+, h}\right\rangle \\
&= \frac{1}{\beta_{2}}\left\langle\nabla^{j-1} u^{+, h}, \partial_{t} \nabla^{j} n^{+, h}\right\rangle+\frac{\nu_{1}^{+}}
{\beta_{2}}\left\langle\nabla^{j-1}\Delta u^{+, h}, \nabla^{j} n^{+, h}\right\rangle+\frac{\nu_{2}^{+}}
{\beta_{2}}\left\langle\nabla^{j} \operatorname{div} u^{+, h}, \nabla^{j}  n^{+, h}\right\rangle \\
 &\quad+\frac{1}{\beta_{2}}\left\langle\nabla^{j-1}\mathcal{F}_{2}^h, \nabla^{j} n^{+, h}\right\rangle \\
&:= L_{1}^j+L_{2}^j+L_{3}^j+L_{4}^j.\end{split}\end{align} Due to
\eqref{3.2}, Lemmas \ref{1interpolation}--\ref{lh2}, we have from
integration by parts, and Young's inequality that
\begin{align}\begin{split}\label{3.64}
\left|L_{1}^j\right|&=\left|-\frac{1}{\beta_{2}}\left\langle\nabla^{j-1} u^{+, h}, \beta_{1} \nabla^{j}\operatorname{div} u^{+, h}
\right\rangle+\frac{1}{\beta_{2}}\left\langle\nabla^{j-1} u^{+,h}, \nabla^{j}\mathcal{F}_{1}^h\right\rangle\right|\\
&=\left|\frac{\beta_{1}}{\beta_{2}}\left\|\nabla^{j-1}\operatorname{div} u^{+, h}\right\|^{2}_{L^2}-\frac{1}{\beta_{2}}\left\langle\nabla^{j-1}\operatorname{div} u^{+,h}, \nabla^{j-1}\mathcal{F}_{1}^h\right\rangle\right|\\
&\leq \frac{\beta_{1}}{\beta_{2}}\left\|\nabla^{j-1}\operatorname{div} u^{+, h}\right\|^{2}_{L^2}
+C\left\|\nabla^{j-1}\operatorname{div}u^{+, h}\right\|_{L^2}
\left(\left\|n^{+}\right\|_{L^{3}}\left\|\nabla^{j} u^{+}\right\|_{L^6}+\left\|u^{+}\right\|_{L^{\infty}}\left\|\nabla^{j}n^{+}\right\|_{L^2}\right)\\
&\leq \frac{\beta_{1}}{\beta_{2}}\left\|\nabla^{j-1}\operatorname{div} u^{+, h}\right\|^{2}_{L^2}+C\left\|\nabla^{j+1}u^{+}\right\|_{L^2}
\left(\left\|n^{+}\right\|_{H^{1}}\left\|\nabla^{j+1} u^{+}\right\|_{L^2}+\left\|u^{+}\right\|_{H^{2}}\left\|\nabla^{j}n^{+}\right\|_{L^2}\right)\\
&\leq \frac{\beta_{1}}{\beta_{2}}\left\|\nabla^{j-1}\operatorname{div} u^{+, h}\right\|^{2}_{L^2}+C\delta_0
\left(\left\|\nabla^{j} n^{+}\right\|^{2}_{L^2}+\left\|\nabla^{j+1}  u^{+}\right\|^{2}_{L^2}\right).
\end{split}\end{align}
From $\eqref{2.11}_1$, $L_{2}^j+L_{3}^j$ can be rewritten as
\begin{align}\begin{split}\label{3.65}
L_{2}^j+L_{3}^j &= \displaystyle\frac{\nu_1^++\nu_2^+}{\beta_2}\left\langle\nabla^j\operatorname{div} u^{+, h}, \nabla^jn^{+, h}\right\rangle\\
&=-\displaystyle\frac{\nu_1^++\nu_2^+}{2\beta_1\beta_2}\frac{\mathrm{d}}{\mathrm{d} t}\left\|\nabla^j n^{+, h}\right\|^{2}_{L^2}-
\frac{\nu_1^++\nu_2^+}{2\beta_1\beta_2\sqrt{\alpha_1}}\left\langle\nabla^j\operatorname{div}(n^+u^+)^h, \nabla^jn^{+, h}\right\rangle.
\end{split}\end{align}
On the other hand, we have
\begin{align}\begin{split}\label{3.66}
\left\langle\nabla^j\operatorname{div}(n^+u^+)^h, \nabla^jn^{+, h}\right\rangle
&=\left\langle\nabla^j\operatorname{div}(n^+u^+)-\nabla^j\operatorname{div}(n^+u^+)^l, \nabla^jn^{+, h}\right\rangle\\
&=\left\langle\nabla^j\operatorname{div}(n^{+, h}u^+), \nabla^jn^{+, h}\right\rangle+
\left\langle\nabla^j\operatorname{div}(n^{+, l}u^+), \nabla^jn^{+, h}\right\rangle\\
&\quad-\left\langle\nabla^j\operatorname{div}(n^{+}u^+)^l, \nabla^jn^{+, h}\right\rangle\\
&=:L_{2,3}^{j, 1}+L_{2,3}^{j,2}+L_{2,3}^{j, 3}.
\end{split}\end{align}
For the term $L_{2,3}^{j, 1}$, by using integration by parts,
Lemmas \ref{1interpolation}--\ref{lh2} and Young's inequality, we
have
\begin{align}\begin{split}\label{3.67}
&\left|L_{2,3}^{j, 1}\right|\\
&=\left|-\left\langle\operatorname{div}u^+, \left|\nabla^jn^{+, h}\right|^2\right\rangle+\left\langle\nabla^j\left(u^+\cdot \nabla n^{+, h}\right)-u^+\cdot \nabla^{j+1}n^{+, h},
\nabla^jn^{+, h}\right\rangle+\left\langle\nabla^j\left(n^{+, h}\operatorname{div}u^+\right),\nabla^jn^{+, h}\right\rangle\right|\\
&\lesssim \left\|\operatorname{div}u^+\right\|_{L^\infty}\left\|\nabla^{j}  n^{+, h}\right\|^{2}_{L^2}+
\left\|\nabla^{j}  n^{+, h}\right\|_{L^2}\left(\left\|\nabla u^+\right\|_{L^\infty}\left\|\nabla^{j}  n^{+, h}\right\|_{L^2}+
\left\|\nabla^{j}  u^{+}\right\|_{L^6}\left\|\nabla n^{+, h}\right\|_{L^3}\right)\\
&\quad+\left\|\nabla^{j}  n^{+, h}\right\|_{L^2}\left(\left\|\nabla^{j+1} u^+\right\|_{L^2}\left\|n^{+, h}\right\|_{L^\infty}+
\left\|\operatorname{div}u^+\right\|_{L^\infty}\left\|\nabla^j n^{+, h}\right\|_{L^2}\right)\\
&\lesssim \left\|\nabla^{2}  u^{+}\right\|_{L^2}^{\frac{1}{2}}\left\|\nabla^{3}  u^{+}\right\|_{L^2}^{\frac{1}{2}}\left\|\nabla^{j}  n^{+, h}\right\|_{L^2}^
2
+\left\|\nabla n^{+}\right\|_{H^1}\left\|\nabla^{j}  n^{+, h}\right\|_{L^2}\left\|\nabla^{j+1}  u^{+}\right\|_{L^2}\\
&\lesssim\left(\delta_0+(1+t_0)^{-\frac{7}{8}}\right)\left(\left\|\nabla^{j}  n^{+,
h}\right\|_{L^2}^2+\left\|\nabla^{j+1}
u^{+}\right\|_{L^2}^2\right),
\end{split}\end{align}
where we have used \eqref{3.2} for $N=3$ and \eqref{3.3}. By using
similar arguments, for the terms $L_{2,3}^{j, 2}$ and  $L_{2,3}^{j,
3}$, we have
\begin{align}\begin{split}\label{3.68}
\left|L_{2,3}^{j, 2}\right|&\lesssim \left\|\nabla^{j}  n^{+, h}\right\|_{L^2}\left(
\left\|\nabla^{j+1}  n^{+, l}\right\|_{L^2}\left\|u^+\right\|_{L^\infty}+\left\|n^{+, l}\right\|_{L^\infty}
\left\|\nabla^{j+1}  u^{+}\right\|_{L^2}\right)\\
&\lesssim \left\|\nabla^{j}  n^{+}\right\|_{L^2}\left(
\left\|\nabla^{j}  n^{+}\right\|_{L^2}\left\|u^+\right\|_{H^2}+\left\|n^{+}\right\|_{H^2}
\left\|\nabla^{j+1}  u^{+}\right\|_{L^2}\right)\\
&\lesssim \delta_0\left( \left\|\nabla^{j}  n^{+}\right\|^{2}_{L^2}+\left\|\nabla^{j+1}  u^{+}\right\|^{2}_{L^2}\right),
\end{split}\end{align}
\begin{align}\begin{split}\label{3.69}
\left|L_{2,3}^{j, 3}\right|&\lesssim \left\|\nabla^{j}  n^{+, h}\right\|_{L^2}\left\|\nabla^{j}\operatorname{div}\left(n^{+}u^+\right)^l\right\|_{L^2}
\\&\lesssim \left\|\nabla^{j}  n^{+}\right\|_{L^2}\left\|\nabla^{j}\left(n^{+}u^+\right)\right\|_{L^2}\\
&\lesssim \left\|\nabla^{j}  n^{+}\right\|_{L^2}\left(\left\|\nabla^{j}  n^{+}\right\|_{L^2}\left\|u^{+}\right\|_{L^\infty}+
\left\|n^{+}\right\|_{L^3}\left\|\nabla^ju^{+}\right\|_{L^6}\right)\\
&\lesssim \left\|\nabla^{j}  n^{+}\right\|_{L^2}\left(\left\|\nabla^{j}  n^{+}\right\|_{L^2}\left\|u^{+}\right\|_{H^2}+
\left\|n^{+}\right\|_{H^1}\left\|\nabla^{j+1}u^{+}\right\|_{L^2}\right)\\
&\lesssim \delta_0\left(\left\|\nabla^{j}  n^{+}\right\|^{2}_{L^2}+\left\|\nabla^{j+1}  u^{+}\right\|^{2}_{L^2}\right).
\end{split}\end{align}
Combining the relations \eqref{3.65}--\eqref{3.69}, we have
\begin{equation}\label{3.70}|L_2^j+L_3^j|\lesssim -\displaystyle\frac{\nu_1^++\nu_2^+}{2\beta_1\beta_2}\frac{\mathrm{d}}{\mathrm{d} t}\left\|\nabla^j n^{+, h}\right\|^{2}_{L^2}
+\left(\delta_0+(1+t_0)^{-\frac{7}{8}}\right)\left(\left\|\nabla^{j}
n^{+}\right\|^{2}_{L^2}+\left\|\nabla^{j+1}
u^{+}\right\|^{2}_{L^2}\right).\end{equation} Applying the similar
arguments used in \eqref{3.8}, for the term $L_4^j$, we have
\begin{equation}\label{3.71}|L_4^j|\lesssim \left(\delta_0+(1+t_0)^{-\frac{7}{8}}\right)\left(\left\|\nabla^{j}\left(n^+,n^{-}\right)\right\|^{2}_{L^2}+\left\|\nabla^{j}  u^{+}\right\|^{2}_{H^1}\right).\end{equation}
Combining the relations \eqref{3.63}--\eqref{3.64} and \eqref{3.70}--\eqref{3.71}, we finally conclude that
\begin{align}\begin{split}\label{3.72}\displaystyle \frac{\mathrm{d}}{\mathrm{d} t} &
\displaystyle\left\{\frac{\nu_1^++\nu_2^+}{2\beta_1\beta_2}\left\|\nabla^j n^{+, h}\right\|^{2}_{L^2}
+\left\langle\nabla^{j-1} u^{+, h}, \frac{1}{\beta_{2}} \nabla^{j} n^{+, h}\right\rangle\right\}\\
&\quad+\frac{\beta_{1}}{\beta_{2}}\left\|\nabla^{j} n^{+, h}\right\|^{2}_{L^2}+\left\langle\nabla^j n^{-, h}, \nabla^{j} n^{+, h}\right\rangle\\
&\leq\displaystyle
C\left(\frac{\beta_{1}}{\beta_{2}}\left\|\nabla^j\operatorname{div}u^{+}\right\|^{2}_{L^2}+
\left(\delta_0+(1+t_0)^{-\frac{7}{8}}\right)\left(\left\|\nabla^{j}
\left(n^+,n^{-}\right)\right\|^{2}_{L^2}+\left\|\nabla^{j}
u^{+}\right\|^{2}_{H^1}\right)\right).\end{split}\end{align}
Similarly, for the dissipation estimate of $\nabla^jn^{-, h}$, we
also have
\begin{align}\begin{split}\label{3.73}\displaystyle \frac{\mathrm{d}}{\mathrm{d} t} &
\displaystyle\left\{\frac{\nu_1^-+\nu_2^-}{2\beta_3\beta_4}\left\|\nabla^j n^{-, h}\right\|^{2}_{L^2}
+\left\langle\nabla^{j-1} u^{-, h}, \frac{1}{\beta_{3}} \nabla^{j} n^{-, h}\right\rangle\right\}\\
&\quad+\frac{\beta_{4}}{\beta_{3}}\left\|\nabla^{j} n^{-, h}\right\|^{2}_{L^2}+\left\langle\nabla^j n^{+, h}, \nabla^{j} n^{-, h}\right\rangle\\
&\leq\displaystyle
C\left(\frac{\beta_{4}}{\beta_{3}}\left\|\nabla^j\operatorname{div}u^{-}\right\|^{2}_{L^2}+
\left(\delta_0+(1+t_0)^{-\frac{7}{8}}\right)\left(\left\|\nabla^{j}
\left(n^+,n^{-}\right)\right\|^{2}_{L^2}+\left\|\nabla^{j}
u^{-}\right\|^{2}_{H^1}\right)\right).\end{split}\end{align}

\textbf{Step 4: Closing the estimates.} Now, we are in a position to
close the estimates. Due to the key energy estimates \eqref{3.61}-\eqref{3.62} and \eqref{3.72}--\eqref{3.73},
for any $\ell\leq j\leq N$, we can follow the proof of \eqref{3.37} step by step to get
 \begin{equation}\label{3.74}
\displaystyle \frac{d}{dt}\mathcal{E}_j(t)+D_2\mathcal{E}_j(t)\lesssim \left\|\nabla^{j}  \left(n^{+, l},u^{+, l},n^{-, l},u^{-, l}\right)\right\|^{2}_{L^2},
 \end{equation}
 where $D_2>0$ is a given positive constant and $\mathcal{E}_j(t)$ is equivalent to $\left\|\nabla^{j}  \left(n^{+},u^{+},n^{-},u^{-}\right)\right\|^{2}_{L^2}$.
 Then, summing up the estimate \eqref{3.74} from $j=\ell$ to $N$, one has
 \begin{equation}\label{3.75}
\displaystyle \frac{d}{dt}\bar{\mathcal{E}}_\ell^N(t)+D_2\bar{\mathcal{E}}_\ell^N(t)\lesssim \left\|\nabla^{\ell}  \left(n^{+, l},u^{+, l},n^{-, l},u^{-, l}\right)\right\|^{2}_{H^{N-\ell}},
 \end{equation}
 where $\bar{\mathcal{E}}_\ell^N(t)$ is equivalent to $\left\|\nabla^{\ell}  \left(n^{+},u^{+},n^{-},u^{-}\right)\right\|^{2}_{H^{N-\ell}}$.
 Next, applying Gronwall's inequality to \eqref {3.75} and using \eqref{3.52} with $\ell\leq j\leq N$, we obtain
 \begin{equation}\label{3.76}
\bar{\mathcal{E}}_\ell^N(t)\leq C(N_0) \left(1+t\right)^{-\frac{3}{4}-\frac{\ell}{2}},
 \end{equation}
 which together with the definition of ${\mathcal{E}}_\ell^N(t)$ in \eqref{3.42} implies \eqref{3.49}.\par
 Therefore, the proof of  Lemma \ref{Lemma3.2} has been completed.
\end{proof}

In rest of this section, we devote ourselves to deducing the lower--bound on
the convergence rate of the global solution to complete the proof of
Theorem \ref{1mainth}.

\begin{Theorem}\label{5mainth} Assume that the  hypotheses of Theorem \ref{3.1mainth} and \eqref{1.24} are in force, then there is
a positive constant $c_2$ independent of time such that for any
large enough $t$,
\begin{equation}\begin{split}\label{3.77}\min&\{\|\nabla^\ell n^+(t)\|_{L^2},\|\nabla^\ell u^+(t)\|_{L^2},\|\nabla^\ell n^-(t)\|_{L^2},\|\nabla^\ell u^-(t)\|_{L^2}\}\\ \geq &~c_2(1+t)^{-\frac{3}{4}-\frac{\ell}{2}},
\end{split}\end{equation}
and
\begin{equation}\label{3.78}\min\left\{\|n^+(t)\|_{L^p},\|u^+(t)\|_{L^p},\|n^-(t)\|_{L^p},\|u^-(t)\|_{L^p}\right\}\ge c_2(1+t)^{-\frac{3}{2}\left(1-\frac{1}{p}\right)},
\end{equation}
for $0\leq \ell\leq N$ and $\ 2\leq p\leq\infty.$
\end{Theorem}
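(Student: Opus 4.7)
The plan is to combine the sharp linear lower bound of Proposition \ref{Prop2.6} with Duhamel's formula to establish an $L^2$ lower bound for the solution itself, and then propagate it to higher-order derivatives and to general $L^p$ norms via two Gagliardo--Nirenberg-type interpolations, exactly as foreshadowed in the introduction.

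First I would treat the base case $\ell=0$. Writing the solution as $U(t)=e^{t\mathcal B}U_0+\int_0^t e^{(t-\tau)\mathcal B}\mathcal F(\tau)\,d\tau$ and projecting onto low frequencies, the triangle inequality gives
\begin{equation*}
\|U^l(t)\|_{L^2}\geq \|(e^{t\mathcal B}U_0)^l\|_{L^2}-\Big\|\int_0^t (e^{(t-\tau)\mathcal B}\mathcal F(\tau))^l\,d\tau\Big\|_{L^2}.
\end{equation*}
The first term is bounded below by $C_1 N_0\sqrt{\delta_0}(1+t)^{-3/4}$ by \eqref{2.35} under the hypothesis \eqref{1.24}. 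For the Duhamel integral, I would combine the linear estimate \eqref{2.34} with $\|\mathcal F(\tau)\|_{L^1}\lesssim (1+\tau)^{-2}$, which follows from the upper bounds \eqref{1.22}--\eqref{1.23} applied to the quadratic structure of $\mathcal F$ worked out in \eqref{3.40}; splitting the time integral at $\tau=t/2$ then yields a nonlinear correction of size $C(N_0)^2(1+t)^{-3/4}$. Choosing $\delta_0$ sufficiently small relative to $N_0$ forces the linear term to dominate, and since $\|U(t)\|_{L^2}\geq\|U^l(t)\|_{L^2}$ this gives $\|U(t)\|_{L^2}\geq c(1+t)^{-3/4}$ for large $t$.

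Next I would upgrade to $\nabla^\ell$ via a negative-Sobolev interpolation. Running the same Duhamel argument with Proposition \ref{Prop2.6} at the negative exponent $k=-1$ (admissible since $k>-3/2$) produces the matching upper bound $\|U(t)\|_{\dot H^{-1}}\leq C(1+t)^{-1/4}$. The Gagliardo--Nirenberg inequality $\|U\|_{L^2}\leq C\|U\|_{\dot H^{-1}}^{\ell/(\ell+1)}\|\nabla^\ell U\|_{L^2}^{1/(\ell+1)}$ then rearranges to
\begin{equation*}
\|\nabla^\ell U(t)\|_{L^2}\geq \frac{\|U(t)\|_{L^2}^{\ell+1}}{C\|U(t)\|_{\dot H^{-1}}^{\ell}}\geq c(1+t)^{-3/4-\ell/2},
\end{equation*}
which is exactly \eqref{3.77}; the estimate has to be carried out component by component because Proposition \ref{Prop2.6} supplies separate lower bounds for each of $n^+$, $u^+$, $n^-$, $u^-$. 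For the $L^p$ statement \eqref{3.78} with $2<p\leq\infty$, the three-exponent log-convex interpolation $\|U\|_{L^2}\leq\|U\|_{L^1}^{(p-2)/(2(p-1))}\|U\|_{L^p}^{p/(2(p-1))}$ inverts to $\|U\|_{L^p}\geq c\|U\|_{L^2}^{2(p-1)/p}/\|U\|_{L^1}^{(p-2)/p}$, so substituting the $L^2$ lower bound gives the rate $(1+t)^{-(3/2)(1-1/p)}$, provided $\|U(t)\|_{L^1}$ is uniformly bounded. That last bound would come from one more Duhamel estimate, exploiting the $L^1\to L^1$ continuity of the linearized propagator and the time-integrability $\|\mathcal F(\tau)\|_{L^1}\lesssim (1+\tau)^{-2}$.

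The hard part is the very first step: the linear lower bound and the nonlinear Duhamel correction decay at the identical rate $(1+t)^{-3/4}$ in $L^2$, so one cannot simply absorb the nonlinearity as a perturbation by time-decay alone. The argument succeeds only because condition \eqref{1.24} builds the smallness factor $\sqrt{\delta_0}$ directly into the Fourier-transform lower bound, and because the split of the Duhamel integral at $\tau=t/2$ produces a nonlinear constant of order $C(N_0)^2$ that can be made strictly smaller than $C_1 N_0\sqrt{\delta_0}$ by shrinking $\delta_0$. A secondary subtlety is the uniform $L^1$ bound required in the last step, which is not an energy-type quantity and must be controlled by exploiting the genuinely quadratic structure and fast time-decay of $\mathcal F$ rather than by a direct energy inequality.
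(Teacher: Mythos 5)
Your plan for the $L^{2}$ lower bound and for the $\dot H^{-1}$-to-$\nabla^{\ell}$ interpolation matches the paper's own argument: Duhamel together with Proposition~\ref{Prop2.6} at $k=-1$ gives $\|\Lambda^{-1}U(t)\|_{L^{2}}\lesssim(1+t)^{-1/4}$, Duhamel together with \eqref{2.35} gives the $L^{2}$ lower bound, and the inequality $\|f\|_{L^{2}}\le C\|\Lambda^{-1}f\|_{L^{2}}^{\ell/(\ell+1)}\|\nabla^{\ell}f\|_{L^{2}}^{1/(\ell+1)}$ then yields \eqref{3.77}. Two issues, one subsidiary and one substantive.

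Subsidiary: the smallness bookkeeping does not close as stated. You write the nonlinear Duhamel correction as $C(N_0)^{2}(1+t)^{-3/4}$ and then claim that shrinking $\delta_0$ makes the linear term $C_1 N_0\sqrt{\delta_0}(1+t)^{-3/4}$ dominate; since $C(N_0)^{2}$ carries no $\delta_0$, shrinking $\delta_0$ makes the comparison worse, not better. One must pull a factor of $\delta_0$ out of the quadratic nonlinearity (one copy of $U$ estimated by $\|U\|_{H^{2}}\le\delta_0$), so that the correction is of size $C\delta_0 N_0(1+t)^{-3/4}$, which is indeed dominated by $N_0\sqrt{\delta_0}(1+t)^{-3/4}$ once $\sqrt{\delta_0}$ is small; this is exactly the paper's estimate.

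Substantive: your proof of \eqref{3.78} relies on a uniform-in-time $L^{1}$ bound for $U(t)$, obtained from alleged $L^{1}\to L^{1}$ continuity of $e^{t\mathcal B}$. That continuity fails for compressible viscous systems: the low-frequency propagator contains a diffusion-wave part whose kernel concentrates near the acoustic cone $|x|\approx ct$ with thickness $\sqrt{t}$, and its $L^{1}(\mathbb R^{3})$ norm grows like $(1+t)^{1/2}$ (Hoff--Zumbrun-type spreading). Even granting the optimistic bound $\|U(t)\|_{L^{1}}\lesssim (1+t)^{1/2}$, your Lyapunov inversion
\begin{equation*}
\|U\|_{L^{p}}\gtrsim \frac{\|U\|_{L^{2}}^{2(p-1)/p}}{\|U\|_{L^{1}}^{(p-2)/p}}
\end{equation*}
gives only $(1+t)^{-(4p-5)/(2p)}$, which is strictly weaker than the target $(1+t)^{-\frac{3}{2}(1-\frac{1}{p})}$ for every $p>2$. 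The paper avoids $L^{1}$ altogether and instead uses the Gagliardo--Nirenberg interpolation
\begin{equation*}
\|f\|_{L^{2}}\le C\,\|\Lambda^{-1}f\|_{L^{2}}^{\frac{3p-6}{5p-6}}\,\|f\|_{L^{p}}^{\frac{2p}{5p-6}},
\end{equation*}
which needs only the $\dot H^{-1}$ decay you already have; substituting $\|U\|_{L^{2}}\gtrsim (1+t)^{-3/4}$ and $\|\Lambda^{-1}U\|_{L^{2}}\lesssim(1+t)^{-1/4}$ recovers the sharp exponent. Replacing your $L^{1}$-based step by this interpolation closes the gap.
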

\begin{proof}If $t$ is large enough, it follows from \eqref{3.38}, Proposition \ref{Prop2.4}, Lemmas \ref{Lemma3.1} and \ref{lh1} that
\begin{equation}\begin{split}\nonumber\|&\wedge^{-1}(n^+, u^+, n^-, u^-)(t)\|_{L^2}
\\ \le ~&\|\wedge^{-1}(n^{+, l},u^{+, l}, n^{-, l}, u^{-, l})(t)\|_{L^2}+\|\wedge^{-1}(n^{+, h},u^{+, h}, n^{-, h}, u^{-, h})(t)\|_{L^2}\\ \le ~&C(N_0)(1+t)^{-\frac{1}{4}}+\int_0^t(1+t-\tau)^{-\frac{1}{4}}\|\mathcal F(\tau)\|_{L^1}\mathrm{d}\tau+\|(n^{+, h},u^{+, h}, n^{-, h}, u^{-, h})(t)\|_{L^2}
\\ \le ~&C(N_0)\left((1+t)^{-\frac{1}{4}}+\int_0^t(1+t-\tau)^{-\frac{1}{4}}(1+\tau)^{-\frac{3}{2}}\mathrm{d}\tau\right)\\
 \le ~&C(N_0)(1+t)^{-\frac{1}{4}},
\end{split}\end{equation}
and
\begin{equation}\begin{split}\nonumber&\min\left\{\|n^+(t)\|_{L^2},\|u^+(t)\|_{L^2},\|n^+(t)\|_{L^2},\|u^-(t)\|_{L^2}\right\}\\ \ge &~\min\left\{\|n^{+, l}(t)\|_{L^2},\|u^{+, l}(t)\|_{L^2},\|n^{-, l}(t)\|_{L^2},\|u^{-, l}(t)\|_{L^2}\right\}
\\ \gtrsim&~ \sqrt{\delta_0}N_0(1+t)^{-\frac{3}{4}}-\int_0^t(1+t-\tau)^{-\frac{3}{4}}\|\mathcal F(\tau)\|_{L^1}\mathrm{d}\tau\\
\geq&~C_1N_0\sqrt{\delta_0}(1+t)^{-\frac{3}{4}}-C\delta_0N_0(1+t)^{-\frac{3}{4}}\\
\geq&~c_3(1+t)^{-\frac{3}{4}},
\end{split}\end{equation}
since $\delta_0$ is sufficiently small. These together with the
interpolations
\begin{equation}\nonumber\|f\|_{L^2}\leq C\|\wedge^{-1}f\|_{L^2}^\frac{\ell}{\ell+1}\|\nabla^\ell f\|_{L^2}^\frac{1}{\ell\ell+1},\end{equation}
and
\begin{equation}\nonumber\|f\|_{L^2}\leq C\|\wedge^{-1}f\|_{L^2}^\frac{3p-6}{5p-6}\|f\|_{L^r}^\frac{2p}{5p-6}\end{equation}
imply \eqref{3.77} and \eqref{3.78} immediately, and thus the proof
of Theorem \ref{5mainth} is completed.\par Therefore, we have
completed the proof of Theorem \ref{1mainth}.
\end{proof}

\appendix

\section{Analytic tools}\label{1section_appendix}
We recall the Sobolev interpolation of the Gagliardo--Nirenberg inequality.
 \begin{Lemma}\label{1interpolation}
 Let $0\le i, j\le k$, then we have
\begin{equation}\nonumber
\norm{\nabla^i f}_{L^p}\lesssim \norm{  \nabla^jf}_{L^q}^{1-a}\norm{ \nabla^k f}_{L^r}^a
\end{equation}
where $a$ satisfies
\begin{equation}\nonumber
\frac{i}{3}-\frac{1}{p}=\left(\frac{j}{3}-\frac{1}{q}\right)(1-a)+\left(\frac{k}{3}-\frac{1}{r}\right)a.
\end{equation}

Especially, while $p=q=r=2$, we have
\begin{equation}\nonumber
 \norm{\nabla^if}_{L^2}\lesssim \norm{\nabla^jf}_{L^2}^\frac{k-i}{k-j}\norm{\nabla^kf}_{L^2}^\frac{i-j}{k-j}.
\end{equation}
\begin{proof}
This is a special case of  \cite[pp. 125, THEOREM]{Nirenberg}.
\end{proof}
\end{Lemma}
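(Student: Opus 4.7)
The plan is to prove the inequality by combining a scaling argument with a Littlewood--Paley decomposition. First, I would verify that the exponent condition on $a$ is forced: applying the claimed inequality to the rescaling $f_\lambda(x)=f(\lambda x)$ for $\lambda>0$ and demanding the estimate hold uniformly in $\lambda$ immediately gives the stated identity between $i/3-1/p$ and $(j/3-1/q)(1-a)+(k/3-1/r)a$. This also tells us that $a=(i-j)/(k-j)$ when $p=q=r=2$.

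For the $p=q=r=2$ special case (which is what the body of the paper actually uses), the cleanest route is via Plancherel. Writing
\begin{equation}\nonumber
\|\nabla^i f\|_{L^2}^2=\int_{\mathbb{R}^3}|\xi|^{2i}|\widehat{f}(\xi)|^2\,d\xi
\end{equation}
and factoring $|\xi|^{2i}=|\xi|^{2j(1-a)}|\xi|^{2ka}$ with $a=(i-j)/(k-j)$, an application of H\"older's inequality in the $\xi$-integral with conjugate exponents $1/(1-a)$ and $1/a$ yields
\begin{equation}\nonumber
\|\nabla^i f\|_{L^2}^2\le\Big(\int|\xi|^{2j}|\widehat{f}|^2\,d\xi\Big)^{1-a}\Big(\int|\xi|^{2k}|\widehat{f}|^2\,d\xi\Big)^{a}=\|\nabla^j f\|_{L^2}^{2(1-a)}\|\nabla^k f\|_{L^2}^{2a},
\end{equation}
which is precisely the stated $L^2$ form.

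For the general $L^p$ version, I would use a homogeneous Littlewood--Paley decomposition $f=\sum_{q\in\mathbb{Z}}\dot\Delta_q f$. Bernstein's inequality gives, for each dyadic block and for $s\le p$,
\begin{equation}\nonumber
\|\nabla^i\dot\Delta_q f\|_{L^p}\lesssim 2^{qi}\,2^{3q(1/s-1/p)}\|\dot\Delta_q f\|_{L^s}.
\end{equation}
Applying this with $s=q$ on low-frequency blocks and with $s=r$ on high-frequency blocks, and splitting the sum $\sum_q$ at an optimally chosen cutoff $q_0$ (chosen so that the two geometric series balance), the scaling relation forces the exponents to match and one obtains
\begin{equation}\nonumber
\|\nabla^i f\|_{L^p}\lesssim\|\nabla^j f\|_{L^q}^{1-a}\|\nabla^k f\|_{L^r}^{a}.
\end{equation}

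The main technical obstacle I anticipate is handling the endpoint/borderline cases where one of $p,q,r$ equals $1$ or $\infty$, or where some of $i,j,k$ coincide, since Bernstein's inequality and the geometric-series splitting degenerate there. However, since the paper only invokes the $L^2$ form (and the general statement with $1<p,q,r<\infty$), these endpoints are not needed, and the Plancherel/H\"older argument above suffices for all applications in Sections~2--3. An alternative, fully elementary route avoiding Fourier analysis is Nirenberg's original iterative argument reducing everything to the one-derivative case $\|\nabla f\|_{L^p}\lesssim\|f\|_{L^q}^{1/2}\|\nabla^2 f\|_{L^r}^{1/2}$ proved by integration by parts, which is what I would fall back on if the Fourier approach ran into difficulty.
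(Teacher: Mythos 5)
Your proposal is correct, but it takes a different route from the paper, which simply cites Nirenberg's original theorem \cite[pp.\ 125]{Nirenberg} without giving any argument. Your Plancherel--H\"older derivation of the $L^2$ case (writing $|\xi|^{2i}=|\xi|^{2j(1-a)}|\xi|^{2ka}$ with $a=(i-j)/(k-j)$ and applying H\"older with exponents $1/(1-a)$, $1/a$) is exactly right and is all the paper ever uses, so it is a fully self-contained replacement for the citation at the level of generality actually needed. The Littlewood--Paley/Bernstein sketch for general $(p,q,r)$ is a standard modern alternative to Nirenberg's elementary iterative proof; it trades integration-by-parts bookkeeping for dyadic-block estimates and an optimization over the cutoff $q_0$, and it does, as you anticipate, need care at the endpoints $p,q,r\in\{1,\infty\}$ and in degenerate cases such as $i=j$ or $j=k$, which Nirenberg's original argument handles directly. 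One small point worth making explicit: the $L^2$ special case as stated requires $j\le i\le k$ (and $j<k$) so that $a\in[0,1]$; the hypothesis ``$0\le i,j\le k$'' in the lemma is slightly looser than what the displayed exponents $(k-i)/(k-j)$ and $(i-j)/(k-j)$ actually permit, and your H\"older step implicitly uses $0\le a\le1$.
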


\begin{Lemma}\label{es-product}
For any integer $k\ge1$, we have
 \begin{equation}\nonumber
  \norm{\nabla ^k(fg)}_{L^p} \lesssim \norm{f}_{L^{p_1}}\norm{\nabla ^kg}_{L^{p_2}} +\norm{\nabla ^kf}_{L^{p_3}}\norm{g}_{L^{p_4}},
 \end{equation}
and
 \begin{equation}\nonumber
  \norm{\nabla ^k(fg)-f\nabla^kg}_{L^p} \lesssim \norm{\nabla f}_{L^{p_1}}\norm{\nabla ^{k-1}g}_{L^{p_2}} +\norm{\nabla ^kf}_{L^{p_3}}\norm{g}_{L^{p_4}},
 \end{equation}
where $p, p_1, p_{2}, p_{3}, p_{4} \in[1, \infty]$ and
$$
\frac{1}{p}=\frac{1}{p_{1}}+\frac{1}{p_{2}}=\frac{1}{p_{3}}+\frac{1}{p_{4}}.
$$
\end{Lemma}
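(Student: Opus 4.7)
The plan is to prove both inequalities by the Leibniz rule together with H\"older's inequality, and then eliminate the intermediate-order derivatives using the Gagliardo--Nirenberg interpolation from Lemma \ref{1interpolation}. First, I would expand
\begin{equation}\nonumber
\nabla^{k}(fg)=\sum_{|\alpha|+|\beta|=k} c_{\alpha,\beta}\,\nabla^{\alpha}f\cdot\nabla^{\beta}g,
\end{equation}
and group the terms by the order $\ell=|\alpha|$ with $0\le\ell\le k$. Applying H\"older's inequality with well-chosen auxiliary exponents $q_{\ell},r_{\ell}\in[1,\infty]$ satisfying $\tfrac{1}{p}=\tfrac{1}{q_{\ell}}+\tfrac{1}{r_{\ell}}$ to each of the intermediate terms ($1\le\ell\le k-1$) and with $(p_1,p_2)$ resp.\ $(p_3,p_4)$ to the two endpoint terms $\ell=0$ and $\ell=k$, we obtain
\begin{equation}\nonumber
\|\nabla^{k}(fg)\|_{L^{p}}\lesssim \|f\|_{L^{p_1}}\|\nabla^{k}g\|_{L^{p_2}}+\|\nabla^{k}f\|_{L^{p_3}}\|g\|_{L^{p_4}}+\sum_{\ell=1}^{k-1}\|\nabla^{\ell}f\|_{L^{q_{\ell}}}\|\nabla^{k-\ell}g\|_{L^{r_{\ell}}}.
\end{equation}

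The main step is then to absorb the intermediate sum into the two endpoint products. I would choose $q_{\ell},r_{\ell}$ so that Lemma \ref{1interpolation} gives
\begin{equation}\nonumber
\|\nabla^{\ell}f\|_{L^{q_{\ell}}}\lesssim \|f\|_{L^{p_1}}^{1-\ell/k}\|\nabla^{k}f\|_{L^{p_3}}^{\ell/k},\qquad \|\nabla^{k-\ell}g\|_{L^{r_{\ell}}}\lesssim \|\nabla^{k}g\|_{L^{p_2}}^{1-\ell/k}\|g\|_{L^{p_4}}^{\ell/k},
\end{equation}
where the auxiliary exponents $q_{\ell},r_{\ell}$ are determined by the scaling identities
$\tfrac{\ell}{3}-\tfrac{1}{q_{\ell}}=(1-\tfrac{\ell}{k})(-\tfrac{1}{p_1})+\tfrac{\ell}{k}(\tfrac{k}{3}-\tfrac{1}{p_3})$
and the analogous relation for $r_{\ell}$ (these are compatible with $\tfrac{1}{p}=\tfrac{1}{q_{\ell}}+\tfrac{1}{r_{\ell}}$ precisely because both endpoint pairs sum to $\tfrac{1}{p}$). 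Multiplying the two interpolation bounds and applying Young's inequality $ab\le a^s/s+b^{s'}/s'$ with $s=k/\ell$, $s'=k/(k-\ell)$ converts each intermediate term into the sum of the two endpoint products, which yields the first inequality.

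For the commutator estimate, the identity $\nabla^{k}(fg)-f\,\nabla^{k}g=\sum_{\ell=1}^{k}c'_{\ell}\,\nabla^{\ell}f\cdot\nabla^{k-\ell}g$ removes the $\ell=0$ term, so every surviving term carries at least one derivative on $f$. Writing $\nabla^{\ell}f=\nabla^{\ell-1}(\nabla f)$ for $1\le\ell\le k$, the same H\"older plus Gagliardo--Nirenberg argument with endpoints $(p_1,p_2)$ (all derivatives on $g$ except one on $f$, i.e.\ $\ell=1$) and $(p_3,p_4)$ ($\ell=k$) produces
\begin{equation}\nonumber
\|\nabla^{k}(fg)-f\nabla^{k}g\|_{L^{p}}\lesssim \|\nabla f\|_{L^{p_1}}\|\nabla^{k-1}g\|_{L^{p_2}}+\|\nabla^{k}f\|_{L^{p_3}}\|g\|_{L^{p_4}},
\end{equation}
which is the second inequality.

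The only genuine obstacle is bookkeeping the exponents: one must verify that the auxiliary $q_{\ell},r_{\ell}$ produced by the scaling identities lie in $[1,\infty]$ and are H\"older-compatible with $p$. This is a standard check (it follows from the hypothesis $\tfrac{1}{p}=\tfrac{1}{p_1}+\tfrac{1}{p_2}=\tfrac{1}{p_3}+\tfrac{1}{p_4}$ together with convex interpolation), but it is where the assumption on $p,p_i$ is actually used. Apart from this, the argument is purely Leibniz plus interpolation, and reduces to the classical Kato--Ponce/Moser calculus inequality.
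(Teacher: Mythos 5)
Your argument is correct in outline, but it is worth noting that the paper does not prove this lemma at all: its entire ``proof'' is a citation to Duan--Ruan--Zhu \cite{Duan1}, so you have supplied the standard self-contained derivation that the authors outsource. Your route --- Leibniz expansion, H\"older on each term, Gagliardo--Nirenberg interpolation of the intermediate derivatives between the two endpoint pairs, and Young's inequality to fold the cross terms back into the endpoints --- is exactly the classical Moser/Kato--Ponce calculus argument, and your exponent bookkeeping is consistent: with $\tfrac{1}{q_\ell}=(1-\tfrac{\ell}{k})\tfrac{1}{p_1}+\tfrac{\ell}{k}\tfrac{1}{p_3}$ and $\tfrac{1}{r_\ell}=(1-\tfrac{\ell}{k})\tfrac{1}{p_2}+\tfrac{\ell}{k}\tfrac{1}{p_4}$ one indeed gets $\tfrac{1}{q_\ell}+\tfrac{1}{r_\ell}=\tfrac{1}{p}$, and the matching weights $\ell/k$ and $1-\ell/k$ on the two factors make Young's inequality close the estimate; the commutator version works the same way after reindexing with $\nabla^{\ell}f=\nabla^{\ell-1}(\nabla f)$ and interpolation weights $(\ell-1)/(k-1)$. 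The one point you correctly flag but do not resolve is the only real delicacy: Lemma \ref{1interpolation} (Nirenberg's theorem) has exceptional cases at the integrability endpoints $p_i\in\{1,\infty\}$ and at the borderline interpolation exponent $a=i/k$, so a fully rigorous proof at the stated generality $p,p_i\in[1,\infty]$ requires either checking those cases or restricting to the exponents actually used in the paper (which all lie in $\{2,3,6,\infty\}$ and cause no trouble). Since the lemma is stated at the same generality in the paper and merely cited, this caveat applies equally to both versions and is not a gap specific to your argument.
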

\begin{proof}
 See \cite{Duan1}.
\end{proof}
Finally, the following two lemmas concern the estimate for the
low--frequency part and the high--frequency part of $f$.
\begin{Lemma}\label{lh1} If $f\in L^r(\mathbb R^3)$ for any $2\leq r\leq\infty$, then we have
$$\|f^l\|_{L^r}+\|f^h\|_{L^r}\lesssim \|f\|_{L^r}.$$
\end{Lemma}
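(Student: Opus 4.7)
The plan is to prove both bounds via convolution and Young's inequality, exploiting the smoothness of the Fourier cutoff $\phi$.

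First I would write the low-frequency operator as a convolution: since $\widehat{f^l}(\xi)=\phi(\xi)\widehat f(\xi)$, we have $f^l=K*f$ where $K:=\mathfrak{F}^{-1}\phi$. Because $\phi\in C_0^\infty(\mathbb{R}^3)$, its inverse Fourier transform $K$ is a Schwartz function on $\mathbb{R}^3$, and in particular $K\in L^1(\mathbb{R}^3)$. By Young's convolution inequality, for every $1\le r\le\infty$,
\begin{equation}\nonumber
\|f^l\|_{L^r}=\|K*f\|_{L^r}\le \|K\|_{L^1}\|f\|_{L^r}\lesssim \|f\|_{L^r},
\end{equation}
which in particular covers the range $2\le r\le\infty$ stated in the lemma.

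Next I would handle the high-frequency part by subtraction. Since $\phi$ and $1-\phi$ decompose the Fourier multiplier as $1=\phi(\xi)+(1-\phi(\xi))$, whenever $\widehat f$ exists we have $f=f^l+f^h$, so $f^h=f-f^l$. Therefore
\begin{equation}\nonumber
\|f^h\|_{L^r}\le \|f\|_{L^r}+\|f^l\|_{L^r}\lesssim \|f\|_{L^r}.
\end{equation}
Adding the two estimates gives the claim. The only mild technical point is to justify the identity $f=f^l+f^h$ at the $L^r$ level (the Fourier transform a priori makes sense for $f\in L^1\cap L^2$); for general $f\in L^r$ one interprets $f^l=K*f$ directly as a convolution with the Schwartz kernel $K$, which is defined pointwise and bounded on every $L^r$ by Young, and then defines $f^h:=f-f^l$ so the decomposition holds by construction.

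There is no serious obstacle here: the argument is essentially the statement that a Schwartz multiplier gives a bounded operator on every $L^r$. The only thing to be careful about is not invoking Plancherel (which would restrict us to $r=2$), and instead using the $L^1$-kernel/Young inequality route so the full range $2\le r\le\infty$ (indeed all of $1\le r\le\infty$) is obtained uniformly.
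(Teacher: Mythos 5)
Your proof is correct and follows essentially the same route as the paper: express $f^l$ as convolution with the Schwartz kernel $\mathfrak{F}^{-1}\phi$, apply Young's inequality to get $\|f^l\|_{L^r}\lesssim\|\mathfrak{F}^{-1}\phi\|_{L^1}\|f\|_{L^r}$, and then bound $f^h=f-f^l$ by the triangle inequality. The extra remarks on interpreting $f^l$ directly as a convolution for general $f\in L^r$ are a harmless (and slightly more careful) elaboration of the same argument.
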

\begin{proof}
For $2\leq r\leq\infty$, by Young's inequality for convolutions, for the low frequency, it holds
\begin{equation}\label{3.361}\|f^l\|_{L^r}\lesssim \|\mathfrak{F}^{-1}\phi\|_{L^1}\|f\|_{L^r}\lesssim \|f\|_{L^r},\nonumber\end{equation}
and hence
\begin{equation}\label{3.362}\|f^h\|_{L^r}\lesssim \|f\|_{L^r}+\|f^l\|_{L^r}\lesssim \|f\|_{L^r}.\nonumber\end{equation}\end{proof}

\begin{Lemma}\label{lh2} Let $f\in H^k(\mathbb R^3)$ for any integer $k\geq 2$. Then there exists a positive constant $C_0$ such that
\begin{equation}\label{3.363}\|\nabla^\ell f^h\|_{L^2}\leq C_0 \|\nabla^{\ell+1}f\|_{L^2},\nonumber\end{equation}
and
\begin{equation}\label{3.364}\|\nabla^{\ell+1} f^l\|_{L^2}\leq C_0\|\nabla^{\ell}f\|_{L^2},\nonumber\end{equation}
for any $0\leq \ell\leq k-1$.
\end{Lemma}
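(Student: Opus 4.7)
The plan is to prove both inequalities via Plancherel's theorem, exploiting the explicit support properties of the cutoff $\phi$ used to define the high- and low-frequency projections. Recall that $\phi(\xi)=1$ for $|\xi|\leq \eta_0/2$ and $\phi(\xi)=0$ for $|\xi|\geq\eta_0$, so $1-\phi(\xi)$ is supported in $\{|\xi|\geq\eta_0/2\}$ while $\phi(\xi)$ is supported in $\{|\xi|\leq\eta_0\}$. The whole argument is to transfer a factor of $|\xi|$ between the two sides of each estimate using the frequency restriction; no nonlinear analysis is needed.

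For the high-frequency bound, I would write
\[
\|\nabla^\ell f^h\|_{L^2}^2=\int_{\mathbb R^3}|\xi|^{2\ell}|1-\phi(\xi)|^2|\widehat f(\xi)|^2\,d\xi,
\]
observe that the integrand is supported in $\{|\xi|\geq \eta_0/2\}$ where $1\leq (2/\eta_0)^2|\xi|^2$, and bound $|1-\phi(\xi)|\leq 1$, obtaining
\[
\|\nabla^\ell f^h\|_{L^2}^2\leq \Bigl(\tfrac{2}{\eta_0}\Bigr)^2\int_{\mathbb R^3}|\xi|^{2(\ell+1)}|\widehat f(\xi)|^2\,d\xi=\Bigl(\tfrac{2}{\eta_0}\Bigr)^2\|\nabla^{\ell+1}f\|_{L^2}^2.
\]
This yields the first inequality with $C_0\geq 2/\eta_0$.

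For the low-frequency bound, I would similarly compute
\[
\|\nabla^{\ell+1}f^l\|_{L^2}^2=\int_{\mathbb R^3}|\xi|^{2(\ell+1)}|\phi(\xi)|^2|\widehat f(\xi)|^2\,d\xi,
\]
note that on $\operatorname{supp}\phi\subset\{|\xi|\leq\eta_0\}$ one has $|\xi|^2\leq\eta_0^2$, and use $|\phi(\xi)|\leq 1$ to arrive at
\[
\|\nabla^{\ell+1}f^l\|_{L^2}^2\leq \eta_0^2\int_{\mathbb R^3}|\xi|^{2\ell}|\widehat f(\xi)|^2\,d\xi=\eta_0^2\|\nabla^\ell f\|_{L^2}^2,
\]
so taking $C_0=\max\{2/\eta_0,\eta_0\}$ handles both inequalities uniformly in $0\leq\ell\leq k-1$. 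There is no genuine obstacle: the only thing to check is that $f\in H^k$ with $k\geq\ell+1$ guarantees that the right-hand sides are finite, which is exactly what the hypothesis $0\leq\ell\leq k-1$ provides. The result is essentially a Bernstein-type inequality tailored to the cutoff $\phi$.
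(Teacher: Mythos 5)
Your proof is correct and follows exactly the approach the paper alludes to (Plancherel plus the definitions of the low- and high-frequency projections); you have simply spelled out the details that the paper omits. The only minor remark is that you implicitly assume $0\leq\phi\leq 1$ in bounding $|1-\phi(\xi)|\leq 1$; if one prefers not to assume this, one can replace the bound by $\sup_\xi|1-\phi(\xi)|<\infty$ (finite since $\phi\in C_0^\infty$), which merely changes the value of $C_0$.
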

\begin{proof} This lemma can be shown directly by the definitions of the low--frequency and
high--frequency of $f$ and the Plancherel theorem,  and thus we omit the details.
\end{proof}

\bigskip

\section*{Acknowledgments}
Huaqiao Wang's research is
partially supported by the National Natural Science Foundation of China $\#$ 11901066, the
Natural Science Foundation of Chongqing $\#$ cstc2019jcyj-msxmX0167 and Project $\#$ 2019CDXYST0015
and $\#$ 2020 CDJQY-A040 supported by the Fundamental Research Funds for the Central Universities.
Guochun Wu's research was partially supported by National Natural
Science Foundation of China $\#$11701193, $\#$11671086, Natural
Science Foundation of Fujian Province $\#$ 2018J05005,
$\#$2017J01562 and Program for Innovative Research Team in Science
and Technology in Fujian Province University Quanzhou High-Level
Talents Support Plan $\#$2017ZT012. Yinghui Zhang's research is
partially supported by Guangxi Natural Science Foundation
$\#$2019JJG110003, $\#$2019AC20214, and National Natural Science
Foundation of China $\#$11771150, $\#$11571280, $\#$11301172 and
$\#$11226170.

\bigskip

\end{document}